\documentclass[11pt]{amsart} \textwidth=14.5cm \oddsidemargin=1cm
\evensidemargin=1cm
\usepackage{amsmath,wasysym}
\usepackage{amsxtra}
\usepackage{amscd}
\usepackage{amsthm}
\usepackage{amsfonts}
\usepackage{amssymb}
\usepackage{eucal}
\usepackage{graphics, color}
\usepackage{hyperref}

\input prepictex
\input pictex
\input postpictex

\newtheorem{thm}{Theorem}[section]
\newtheorem{lem}[thm]{Lemma}
\newtheorem{cor}[thm]{Corollary}
\newtheorem{prop}[thm]{Proposition}

\theoremstyle{definition}

\newtheorem{example}[thm]{Example}

\theoremstyle{remark}
\newtheorem{rem}[thm]{Remark}

\numberwithin{equation}{section}

\begin{document}

\newcommand{\thmref}[1]{Theorem~\ref{#1}}
\newcommand{\secref}[1]{Section~\ref{#1}}
\newcommand{\lemref}[1]{Lemma~\ref{#1}}
\newcommand{\propref}[1]{Proposition~\ref{#1}}
\newcommand{\corref}[1]{Corollary~\ref{#1}}
\newcommand{\remref}[1]{Remark~\ref{#1}}
\newcommand{\eqnref}[1]{(\ref{#1})}
\newcommand{\exref}[1]{Example~\ref{#1}}

\newcommand{\nc}{\newcommand}
\nc{\Z}{{\mathbb Z}}
\nc{\C}{{\mathbb C}}
\nc{\N}{{\mathbb N}}
\nc{\F}{{\mf F}}
\nc{\Q}{\ol{Q}}
\nc{\la}{\lambda}
\nc{\ep}{\epsilon}
\nc{\h}{\mathfrak h}
\nc{\n}{\mf n}
\nc{\A}{{\mf a}}
\nc{\G}{{\mathfrak g}}
\nc{\SG}{\overline{\mathfrak g}}
\nc{\DG}{\widetilde{\mathfrak g}}
\nc{\D}{\mc D} \nc{\Li}{{\mc L}} \nc{\La}{\Lambda} \nc{\is}{{\mathbf
i}} \nc{\V}{\mf V} \nc{\bi}{\bibitem} \nc{\NS}{\mf N}
\nc{\dt}{\mathord{\hbox{${\frac{d}{d t}}$}}} \nc{\E}{\mc E}
\nc{\ba}{\tilde{\pa}} \nc{\half}{\frac{1}{2}} \nc{\mc}{\mathcal}
\nc{\mf}{\mathfrak} \nc{\hf}{\frac{1}{2}}
\nc{\hgl}{\widehat{\mathfrak{gl}}} \nc{\gl}{{\mathfrak{gl}}}
\nc{\hz}{\hf+\Z}
\nc{\dinfty}{{\infty\vert\infty}} \nc{\SLa}{\overline{\Lambda}}
\nc{\SF}{\overline{\mathfrak F}} \nc{\SP}{\overline{\mathcal P}}
\nc{\U}{\mathfrak u} \nc{\SU}{\overline{\mathfrak u}}
\nc{\ov}{\overline}
\nc{\wt}{\widetilde}
\nc{\wh}{\widehat}
\nc{\sL}{\ov{\mf{l}}}
\nc{\sP}{\ov{\mf{p}}}
\nc{\osp}{\mf{osp}}
\nc{\spo}{\mf{spo}}
\nc{\hosp}{\widehat{\mf{osp}}}
\nc{\hspo}{\widehat{\mf{spo}}}
\nc{\I}{\mathbb{I}}
\nc{\X}{\mathbb{X}}
\nc{\hh}{\widehat{\mf{h}}}
\nc{\Icirc}{I_{\text{\,\begin{picture}(2,2)\setlength{\unitlength}{0.07in} \put(0.3,0.45){\circle{1}}\end{picture}}}\,}
\nc{\Ibullet}{I_{\text{\,\begin{picture}(2,2)\setlength{\unitlength}{0.07in} \put(0.3,0.45){\circle*{1}}\end{picture}}}\,}

\newcommand{\blue}[1]{{\color{blue}#1}}
\newcommand{\red}[1]{{\color{red}#1}}
\newcommand{\green}[1]{{\color{green}#1}}
\newcommand{\white}[1]{{\color{white}#1}}

 \advance\headheight by 2pt

\title
{Irreducible Characters of Kac-Moody Lie superalgebras}

\author[Cheng]{Shun-Jen Cheng$^\dagger$}
\thanks{$^\dagger$Partially supported by an NSC grant}
\address{Institute of Mathematics, Academia Sinica, Taipei,
Taiwan 10617} \email{chengsj@math.sinica.edu.tw}

\author[Kwon]{Jae-Hoon Kwon$^{\dagger\dagger}$}
\thanks{$^{\dagger\dagger}$Partially supported by a NRF-grant 2011-0006735.}
\address{Department of Mathematics, Sungkyunkwan University
2066 Seobu-ro, Jangan-gu, Suwon, Korea}
\email{jaehoonkw@skku.edu}

\author[Wang]{Weiqiang Wang$^{\dagger\dagger\dagger}$}
\thanks{$^{\dagger\dagger\dagger}$Partially supported by an NSF grant.}
\address{Department of Mathematics, University of Virginia, Charlottesville, VA 22904}
\email{ww9c@virginia.edu}

\begin{abstract}
Generalizing the super duality formalism for finite-dimensional Lie
superalgebras of type $ABCD$, we establish an equivalence between
parabolic BGG categories of a Kac-Moody Lie superalgebra and a
Kac-Moody Lie algebra. The characters 
for a large family of irreducible highest weight modules over
a symmetrizable Kac-Moody Lie superalgebra are then given in terms
of Kazhdan-Lusztig polynomials for the first time. We formulate a notion of integrable
modules over a symmetrizable Kac-Moody Lie superalgebra via super
duality, and show  that these integrable modules form a semisimple
tensor subcategory, whose Littlewood-Richardson tensor product
multiplicities coincide with those in the Kac-Moody Lie algebra
setting.
\end{abstract}

\subjclass[2010]{17B67}

\maketitle

  \setcounter{tocdepth}{1}
 \tableofcontents

\section{Introduction}

Super duality is a powerful general approach developed in the past
years in representation theory of finite-dimensional classical Lie
superalgebras \cite{CWZ, CW1, CL, CLW} (see the book
\cite[Chapter~6]{CW} for an exposition). It states that certain
parabolic BGG categories of Lie superalgebras of type $ABCD$ and
classical Lie algebras at infinite-rank limit are equivalent as
highest weight categories. A parabolic BGG category of Lie
superalgebras almost always contains infinite-dimensional simple
modules, and at present super duality appears to be the only known
general approach toward the basic problem of finding irreducible
characters for such categories. Some notable consequences of super
duality include:

\begin{itemize}
\item[(i)]
The Kostant $\mf u$-homology groups with coefficients in irreducible
highest weight modules over classical Lie algebras and Lie
superalgebras match perfectly.

\item[(ii)]
The character formula for a large class of irreducible highest
weight modules over Lie superalgebras of type $ABCD$ is obtained via
the classical Kazhdan-Lusztig polynomials.
\end{itemize}

The goal of this paper is to {establish the super duality
formalism and also formulate a notion of integrable modules for} a
large class of Kac-Moody Lie superalgebras. The class of Kac-Moody
Lie superalgebras considered in this paper is a family of
contragredient Lie superalgebras  $\SG_n$ for $n\in \N\cup\{-1\}$
associated to super generalized Cartan matrices
corresponding to Dynkin diagrams of the form \eqref{SG diagram}, whose submatrix
corresponding to  a head subdiagram  \eqref{Head diagram}
satisfies the mild conditions \eqref{eq:pseudoCartan for hd} and
\eqref{eq:locally nilpotent} (see Sections \ref{Three Lie superalgebras} and \ref{Irreducible characters}).
More generally, we may also allow the case
where there is more than one isotropic odd simple root together with its associated tail
diagram of type $A$ attached to \framebox{$\mf{B}$}\ (see Remark~\ref{more than one tail}).
In particular,  this contains the finite-dimensional
Lie superalgebras of type $ABCD$, the exceptional simple Lie
superalgebras, and also the affine Lie superalgebras associated with classical and exceptional Lie superalgebras.

The notion of Kac-Moody (or rather that of contragredient) Lie
superalgebras was introduced by Kac \cite[Section 2.5]{K1} in a way
similar to the more familiar notion of Kac-Moody Lie algebras
(see also \cite{vdL}).
The anisotropic Kac-Moody superalgebras and their integrable representations were studied in depth
in \cite{K2} (see also \cite{CFLW} for more general representations),
where the
``anisotropic" condition (C$1'$) in Section~ \ref{Basics on g(A)} means ``no isotropic
odd simple roots".
However, little is known about
the characters of irreducible highest weight modules in BGG
categories of the general Kac-Moody Lie superalgebras (with
isotropic odd simple roots) beyond the finite-dimensional classical
Lie superalgebras except in some rather special cases for affine Lie
superalgebras (see Kac-Wakimoto \cite{KW2}). The classification
problem of finite-growth contragredient Lie superalgebras under
various conditions was studied by van der Leur \cite{vdL}, Hoyt, and
Serganova \cite{HS, Hoyt}.

For our purpose, we introduce another family of contragredient Lie
superalgebras $\G_n$ for $n\in \N$, which are obtained by replacing
all the odd isotropic simple roots in $\SG_n$ with even
non-isotropic ones. We show as a main result in this paper that
suitable parabolic BGG categories of $\SG_n$ and $\G_n$ at
infinite-rank limit are equivalent as highest weight categories. We
remark that $\G_n$ is not necessarily of Kac-Moody (super) type
since the associated matrix may not be a super generalized Cartan
matrix, as it may have a positive off-diagonal entry in a row
corresponding to non-isotropic simple root.

In the most important cases when $\G_n$  or its limit $\G$ of
infinite rank is a symmetrizable Kac-Moody Lie algebra (or even
symmetrizable anisotropic Kac-Moody superalgebra), feature (i)
above of the super duality formalism remains valid in the current
setting once we replace the word ``classical" above by ``Kac-Moody",
while (ii) also applies in many cases by making use of the solution
of the Kazhdan-Lusztig conjectures for symmetrizable Kac-Moody Lie
algebras by Kashiwara-Tanisaki and others (see \cite{KaTa2} and
references therein). Feature (ii) above applies to the simple
exceptional Lie superalgebras, but not to the affine Lie
superalgebras since the corresponding Lie algebra $\G_n$ is not of
Kac-Moody type.

While super duality in our general setting is established largely by
the same strategy as in the case of finite-dimensional classical Lie
superalgebras treated earlier, the technical details are more
challenging than before. For instance, the structures of root
systems are very explicit and well understood in the classical
setting, but it is much less so for Kac-Moody or contragredient Lie
superalgebras. We need to deal with issues like when a contragredient Lie
superalgebra associated to a matrix $B$ is a subalgebra of another
contragredient Lie superalgebra associated to a matrix $A$ (see
Propositions~\ref{prop:KM:embed} and \ref{prop:GSG:sub}). One also
needs to be more roundabout, without referring to the detailed
knowledge of root systems, in showing that the super duality
functors match the parabolic Verma and simple modules, respectively
(see Lemma~\ref{lem:DeltaL}). In the classical setting, we have
weights $\epsilon_i$ available which greatly facilitate the
construction of super duality. In the current generality, we have
found a way of introducing such $\epsilon_i$ as needed, by
considering an extension by an outer derivation (see \eqref{epsilon and h}).

As another remarkable application of super duality, we formulate
a notion of integrable modules over  symmetrizable Kac-Moody Lie
superalgebras $\mc{G}$ associated with super generalized Cartan
matrices with no positive off-diagonal entry (see the condition \eqref{eq:neq0} in Section \ref{sec:integrable}).
These integrable modules correspond to the usual integrable modules over
a symmetrizable anisotropic Kac-Moody Lie superalgebra $\G$  (see
\cite{K2,K}), under super duality with $\SG_{-1}=\mc{G}$. In fact, they are
shown to form a  semisimple tensor subcategory, whose
Littlewood-Richardson tensor product multiplicities coincide with
those for $\G$ (compare \cite{Kw}). We show that the integrable
$\mc{G}$-modules afford a nice intrinsic characterization similar to
that of the integrable $\G$-modules, where the condition of being
semisimple over the $\mf{sl}(2)$ or $\mf{osp}(1|2)$-copy associated
to each non-isotropic simple root  is supplemented by that of  being
a polynomial representation over $\gl(1|1)$ for each odd isotropic
simple root. It also follows that the irreducible modules in this
semisimple category admit BGG type resolutions in terms of
Verma modules. For the orthosymplectic Lie superalgebras such modules are the so-called oscillator modules. In general, we can define a notion of integrable
$\SG_n$-modules for $n\in\N$, even if $\SG_n$ does not satisfy the condition \eqref{eq:neq0}, and obtain similar results (see Remark \ref{rem:integrable for general SG_n}).  Unlike the
Kac-Moody Lie algebra case, the notion of integrable modules over
Kac-Moody Lie superalgebras with odd isotropic simple roots is
subtle and there is no universal agreement; for some earlier notion
of integrability see \cite{KW2} for affine superalgebras (also see \cite{S}).

The paper is organized as follows. In Section~ \ref{Basics on g(A)},
we recall necessary background on contragredient and Kac-Moody Lie
superalgebras. In Section \ref{Three Lie superalgebras}, we define
finite-rank Lie superalgebras  $\SG_n$, $\G_n$, and their limits
$\SG$, $\G$ of infinite rank, respectively.  In Section
\ref{Irreducible characters}, we specialize $\SG$ and $\G$, and
introduce parabolic BGG categories $\ov{\mc O}$ and ${\mc O}$ of
$\SG$ and $\G$, respectively. It is proved that the characters of  irreducible
highest weight modules in $\ov{\mc O}$ are determined by
those in ${\mc O}$. In Section \ref{Super duality}, we establish an
equivalence of categories between $\ov{\mc O}$ and $\mc O$, which we
refer to as super duality. The irreducible characters in the
category $\ov{\mc O}_n$ of $\SG_n$-modules (which is a finite-rank
version of $\ov{\mc O}$) are given by Kazhdan-Lusztig polynomials
(for a large class of $\SG_n$ and their highest weight modules). Finally,
in Section \ref{sec:integrable}, we construct a full subcategory of
$\ov{\mc O}_n$, which form  a semisimple tensor category.

\vspace{.2cm} \noindent {\bf Acknowledgment.}
The second and third
authors thank the Institute of Mathematics, Academia Sinica, Taipei,
for its hospitality and support.

\section{Kac-Moody Lie superalgebras}\label{Basics on g(A)}

Let $\Z$, $\N$, and $\Z_+$ stand for the sets of all, positive, and
non-negative integers, respectively. All vector spaces, algebras,
etc., are over the complex field $\C$.

Let $I$ be a finite set with a $\Z_2$-grading $I=I_{\ov{0}}\sqcup
I_{\ov{1}}$. Suppose that $|I|=n$ and $A=(a_{ij})_{i,j\in I}$ is a
complex matrix of rank $\ell$. We let $(\h,\Pi,\Pi^\vee)$ be a
minimal realization of $A$ (cf. \cite[\S 1.1]{K}); that is, (1) $\h$
is a vector space of dimension $2n-\ell$, (2)
$\Pi=\{\alpha_i\,|\,i\in I\,\}$ and $\Pi^\vee=\{\alpha^\vee_i\,|\,
i\in I\,\}$ are linearly independent subsets of $\h^*$ and $\h$,
respectively, satisfying
$\langle\alpha_i,\alpha_j^\vee\rangle=a_{ji}$ for $i,j\in I$.

Let $\widehat{\G}(A)$ be the Lie superalgebra generated by $\h$ and
$\{\, e_i,f_i\,|\,i\in I\,\}$ subject to the following relations:
\begin{align}\label{KM:aux:01}
\begin{split}
& [h,h']=0, \quad \ \ \ \ \ \ \ \ \ [e_i,f_j]=\delta_{ij}\alpha_i^\vee,\\
& [h,e_i]=\langle\alpha_i,h\rangle e_i,\quad [h,f_i]=-\langle\alpha_i,h\rangle f_i,\\
\end{split}
\end{align}
for $i,j\in I$ and $h,h'\in\h$.
Note that $[\alpha^\vee_j,e_i]=a_{ji} e_i$ and
$[\alpha_j^\vee,f_i]=-a_{ji} f_i$. The parity of each generator is
given by $p(h)=\ov{0}$ for $h\in\mf{h}$, and
$p(e_i)=p(f_i)=\varepsilon$, for $i\in I_{\varepsilon}  \
(\varepsilon\in\Z_2)$.

We have a triangular decomposition
$\widehat{\G}(A)=\widehat{\mf{n}}^+\oplus\h\oplus\widehat{\mf{n}}^-$,
where $\widehat{\mf{n}}^+$ and $\widehat{\mf{n}}^-$ are the
subalgebras generated by the $e_i$'s and the $f_i$'s ($i\in I$),
respectively. Let $\mf{r}_1$ and $\mf{r}_2$ be two ideals of
$\widehat{\G}(A)$ such that $\mf{r}_i\cap\h=0$ for $i=1,2$. Then
$\mf{r}_1+\mf{r}_2$ is also an ideal of $\widehat{\G}(A)$
intersecting $\h$ trivially.  Thus, there exists a unique ideal
$\mf{r}$ that is maximal among the ideals intersecting $\h$
trivially.  Following \cite{K1}, we define
$\G(A):=\widehat{\G}(A)/\mf{r}$ and call it the {\em contragredient
Lie superalgebra} associated with $A$.

Let  $\G=\widehat{\G}(A)/\mf{s}$ be the quotient algebra of
$\widehat{\G}(A)$ by an ideal $\mf{s}\subseteq\mf{r}$. Then $\G$ has
the triangular decomposition $\G=\mf{n}^+\oplus\h\oplus\mf{n}^-$
induced from that of $\widehat{\G}(A)$. Consider the following
conditions on $\G$:
\begin{itemize}
\item[(H)] $\G$ has no non-trivial ideal intersecting $\h$ trivially.

\item[(N)]
For every non-zero root vector $x\in\mf{n}^+$, there exists  $f_i$
such that $[x,f_i]\not=0$;
\item[]
for every non-zero root vector $y\in\mf{n}^-$, there exists   $e_i$
such that $[y,e_i]\not=0$.
\end{itemize}

\begin{lem}\label{lem:H=N}
Let $\G$ be as above. The conditions {\rm (H)} and {\rm (N)} are equivalent.
\end{lem}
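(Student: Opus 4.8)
The plan is to prove the two implications separately, each by contraposition, using only the weight grading coming from the triangular decomposition $\G=\mf{n}^+\oplus\h\oplus\mf{n}^-$. The basic input I would record first is that $\G$ is $\h$-diagonalizable with $\G=\h\oplus\bigoplus_{\alpha\neq 0}\G_\alpha$, that the zero weight space is exactly $\G_0=\h$, and that every root lies in $Q^+:=\sum_{i\in I}\Z_+\alpha_i$ or in $-Q^+$ (sign-coherence). All three facts are immediate from the construction: $\widehat{\mf{n}}^\pm$ are graded by $\pm(Q^+\setminus\{0\})$, the defining ideal $\mf{s}$ is $\h$-stable, hence graded, and meets $\h$ trivially, so these properties pass to the quotient $\G$. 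Since any ideal $\mf{j}$ is $\h$-stable, it is automatically graded, $\mf{j}=\bigoplus_\alpha(\mf{j}\cap\G_\alpha)$, and $\mf{j}\cap\h=0$ is equivalent to $\mf{j}\cap\G_0=0$; in that case $\mf{j}=(\mf{j}\cap\mf{n}^+)\oplus(\mf{j}\cap\mf{n}^-)$.

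For (N)$\Rightarrow$(H) I would argue the contrapositive: assume a non-zero ideal $\mf{j}$ with $\mf{j}\cap\h=0$ and produce a root vector violating (N). By the remark above $\mf{j}$ has a non-zero intersection with $\mf{n}^+$ or with $\mf{n}^-$; say the former. Choose a root $\alpha\in Q^+$ of \emph{minimal height} with $\mf{j}\cap\G_\alpha\neq 0$ and a non-zero $x\in\mf{j}\cap\G_\alpha$. For each $i$ one has $[f_i,x]\in\mf{j}\cap\G_{\alpha-\alpha_i}$. If the $\alpha_i$-coefficient of $\alpha$ is zero (in particular if $\alpha=\alpha_j$ with $j\neq i$), then $\alpha-\alpha_i$ lies in neither $Q^+$ nor $-Q^+$, so it is not a root and $[f_i,x]=0$. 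If that coefficient is positive, then either $\alpha=\alpha_i$, forcing $[f_i,x]\in\mf{j}\cap\h=0$, or $\alpha-\alpha_i\in Q^+$ has strictly smaller height, forcing $\mf{j}\cap\G_{\alpha-\alpha_i}=0$ by minimality, hence again $[f_i,x]=0$. Thus $[x,f_i]=0$ for all $i$, contradicting (N); the case $\mf{j}\cap\mf{n}^-\neq 0$ is symmetric.

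For (H)$\Rightarrow$(N), again by contraposition, suppose (say) $0\neq x\in\mf{n}^+$ is a root vector with $[x,f_i]=0$ for all $i$, and let $\mf{j}$ be the ideal it generates; I would show $\mf{j}\subseteq\mf{n}^+$, whence $\mf{j}$ is a non-trivial ideal with $\mf{j}\cap\h=0$. Set $W:=\mathrm{ad}(U(\h))\,\mathrm{ad}(U(\mf{n}^+))(x)$, which plainly lies in $\mf{n}^+$ and is stable under $\mathrm{ad}\,\h$ and $\mathrm{ad}\,e_i$. The crux is to check that $W$ is also stable under $\mathrm{ad}\,f_i$: writing a spanning element of $W$ as $\mathrm{ad}(e_{j_1})\cdots\mathrm{ad}(e_{j_m})(x)$ up to a scalar, I would commute $\mathrm{ad}(f_i)$ to the right through the $\mathrm{ad}(e_{j_k})$ by the (super) Jacobi identity. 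Each commutator $[f_i,e_{j_k}]$ is a scalar multiple of $\delta_{i j_k}\alpha_i^\vee$ and so lies in $\h$; hence every correction term is again a (shorter) word in the $\mathrm{ad}(e_j)$ applied to $x$ times an $\h$-scalar, and thus lies in $W$, while the single remaining term ends in $\mathrm{ad}(f_i)(x)=0$. Therefore $[f_i,W]\subseteq W$, so $W$ is stable under $\mathrm{ad}\,e_i,\ \mathrm{ad}\,f_i,\ \mathrm{ad}\,\h$, i.e.\ under all of $\G$; being an ideal containing $x$ and contained in $\mf{n}^+$, it witnesses the failure of (H).

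I expect the third paragraph to be the genuine obstacle: the minimal-height argument for (N)$\Rightarrow$(H) is routine once $\G_0=\h$ and sign-coherence are in hand, but the reverse implication rests on the precise commutation identity making $[f_i,e_j]$ a scalar multiple of $\delta_{ij}\alpha_i^\vee\in\h$, together with $[x,f_i]=0$, which is exactly what forces the ideal generated by a singular vector to remain inside $\mf{n}^+$. I would take care that the super signs in the Jacobi identity, while cluttering the bookkeeping, never affect this structural conclusion, since $[f_i,e_j]$ is a scalar multiple of $\alpha_i^\vee$ regardless of parities.
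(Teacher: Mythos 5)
Your proof is correct and follows essentially the same route as the paper: the same minimal-height argument (phrased contrapositively) for (N)$\Rightarrow$(H), and for (H)$\Rightarrow$(N) the same key idea that the ideal generated by a root vector $x\in\mf{n}^+$ annihilated by all $\mathrm{ad}(f_i)$ remains inside $\mf{n}^+$. The only difference is one of detail: the paper merely asserts this last containment, whereas you justify it by showing that $W=\mathrm{ad}(U(\h))\,\mathrm{ad}(U(\mf{n}^+))(x)$ is stable under $\mathrm{ad}(e_i)$, $\mathrm{ad}(f_i)$, and $\mathrm{ad}(\h)$, using that $[f_i,e_j]$ is a multiple of $\delta_{ij}\alpha_i^\vee\in\h$.
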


\begin{proof}
Suppose that $\G$ satisfies (N) and $\mf{s}_0\subseteq \G$ is a
non-zero ideal intersecting $\h$ trivially. Then we can find a
non-zero root vector $x\in \mf{s}_0$ corresponding to a root
$\alpha$, which has minimal height in $\mf{s}_0$. For definiteness,
assume that $\alpha$ is positive or $x\in \mf{s}_0\cap\mf{n}^+$
since the case when $\alpha$ is negative is analogous.  It is clear
that $x\not=e_i$ for $i\in I$, for otherwise $\mf{s}_0\cap\h\not=0$.
By (N), there exists $f_i$ such that $[x,f_i]\not=0$. But $[x,f_i]$
is a positive root vector in $\mf{s}_0$ of height less than that of
$x$. This contradicts the minimality of the height of $x$. So, $\G$
satisfies (H).

Conversely, suppose that $\G$ satisfies (H). If there exists a
positive root vector $x\in\mf{n}^+$ such that $[x,f_i]=0$ for all
$i\in I$, then the ideal generated by $x$ is a non-zero ideal in
$\mf{n}^+$ and hence intersects $\h$ trivially, contradicting (H).
The argument for $y\in\mf{n}^-$ is similar.  Thus, (H) implies (N).
\end{proof}

\begin{lem}  \label{KM:lem:01}
Let $\G$ be a Lie superalgebra generated by an abelian subalgebra
$\h$ of dimension $2n-\ell$ and  $\{\, e_i, f_i\,|\,i\in I\,\}$.
Suppose that there exist linearly independent vectors
$\{\,\alpha^\vee_i\,|\,i\in I\,\}$ in $\h$ and
$\{\,\alpha_i\,|\,i\in I\,\}$ in $\h^*$  such that the relations
\eqref{KM:aux:01} hold in $\G$. If $\G$ satisfies {\rm (H)}, then
$\G\cong\G(A)$.
\end{lem}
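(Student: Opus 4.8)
The plan is to realize $\G$ as a quotient of $\widehat{\G}(A)$ via the universal property and then identify the kernel with the maximal ideal $\mf{r}$. First I would observe that the data $(\h,\Pi,\Pi^\vee)$ attached to $\G$ is itself a minimal realization of $A$: by hypothesis $\dim\h=2n-\ell$ and $\Pi=\{\alpha_i\}$, $\Pi^\vee=\{\alpha_i^\vee\}$ are linearly independent, while evaluating the relation $[h,e_i]=\langle\alpha_i,h\rangle e_i$ at $h=\alpha_j^\vee$ gives $[\alpha_j^\vee,e_i]=\langle\alpha_i,\alpha_j^\vee\rangle e_i$, so that $\langle\alpha_i,\alpha_j^\vee\rangle=a_{ji}$. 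Using this realization to build $\widehat{\G}(A)$, the fact that the generators $\h$, $e_i$, $f_i$ of $\G$ satisfy exactly the defining relations \eqref{KM:aux:01} yields, by the universal property of $\widehat{\G}(A)$, a homomorphism $\pi\colon\widehat{\G}(A)\to\G$ which is the identity on $\h$ and sends $e_i\mapsto e_i$, $f_i\mapsto f_i$. Since these elements generate $\G$, the map $\pi$ is surjective, and because it preserves $\h$-weights it carries the triangular decomposition of $\widehat{\G}(A)$ onto that of $\G$; in particular $\pi(\widehat{\mf n}^{+})=\mf n^{+}$ and $\pi(\widehat{\mf n}^{-})=\mf n^{-}$.

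Next I would pin down $\ker\pi$ from both sides. Since $\pi$ restricts to the identity on $\h$, we have $\ker\pi\cap\h=0$, so by the maximality of $\mf r$ among the ideals of $\widehat{\G}(A)$ meeting $\h$ trivially we obtain $\ker\pi\subseteq\mf r$. For the reverse inclusion I would invoke condition (H) for $\G$. The ideal $\mf r$ is $\h$-stable, hence a sum of $\h$-weight spaces; as the weight-zero subspace of $\widehat{\G}(A)$ is exactly $\h$ and $\mf r\cap\h=0$, it follows that $\mf r\subseteq\widehat{\mf n}^+\oplus\widehat{\mf n}^-$. Consequently $\pi(\mf r)\subseteq\mf n^+\oplus\mf n^-$, so $\pi(\mf r)\cap\h=0$. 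But $\pi(\mf r)$ is the image of an ideal under a surjection, hence an ideal of $\G$; by (H) it must vanish, giving $\mf r\subseteq\ker\pi$.

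Combining the two inclusions gives $\ker\pi=\mf r$, whence $\G=\pi(\widehat{\G}(A))\cong\widehat{\G}(A)/\mf r=\G(A)$, as desired.

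The only genuinely delicate point is the inclusion $\mf r\subseteq\ker\pi$: it hinges on $\mf r$ being a graded (root-space) ideal, so that its image avoids $\h$ and hypothesis (H) can be brought to bear. Everything else — the existence of $\pi$, its surjectivity, and the inclusion $\ker\pi\subseteq\mf r$ — is formal once the given data is recognized as a minimal realization of $A$.
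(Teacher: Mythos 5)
Your proposal is correct and coincides with the paper's own argument: the paper proves Lemma~\ref{KM:lem:01} by citing the Lie algebra case (Carter, Proposition 14.15), whose proof is exactly your scheme --- construct the surjection $\pi\colon\widehat{\G}(A)\to\G$ from the universal property, get $\ker\pi\subseteq\mf{r}$ from maximality of $\mf{r}$, and use the fact that $\mf{r}$ is $\h$-graded with trivial zero-weight component so that $\pi(\mf{r})$ is an ideal of $\G$ meeting $\h$ trivially, hence zero by (H), giving $\ker\pi=\mf{r}$ and $\G\cong\widehat{\G}(A)/\mf{r}=\G(A)$. The delicate point you single out (gradedness of $\mf{r}$, which rests on the linear independence of the $\alpha_i$ so that the zero-weight space of $\widehat{\G}(A)$ is exactly $\h$) is precisely the point that argument turns on.
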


\begin{proof}
It follows by the same proof as in the Lie algebra setting; see the
proof of \cite[Proposition 14.15]{Car}.
\end{proof}

The following was stated in \cite[Section 2.5.1]{K1}.

\begin{prop}  \label{prop:KM:embed}
For $J\subseteq I$, let $B=(a_{ij})_{i,j\in J}$ and
$J_{\varepsilon}=I_{\varepsilon}\cap J$ for $\varepsilon\in \Z_2$.
Then we have a natural inclusion of Lie superalgebras:
$\G(B)\hookrightarrow\G(A)$.
\end{prop}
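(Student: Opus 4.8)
The plan is to realize $\G(B)$ concretely as the Lie sub-superalgebra $\mf{g}'$ of $\G(A)$ generated by $\{e_i,f_i\mid i\in J\}$ together with an appropriate copy of a Cartan subalgebra, and then to identify $\mf{g}'$ with $\G(B)$ by invoking \lemref{KM:lem:01}. Write $m=|J|$ and let $\ell'$ denote the rank of $B$, so that a minimal realization of $B$ carries a Cartan of dimension $2m-\ell'$. Everything will come down to locating such a Cartan inside $\h$ and verifying the hypotheses of \lemref{KM:lem:01}.

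First I would build the Cartan. Set $V=\mathrm{span}\{\alpha_i^\vee\mid i\in J\}\subseteq\h$, which has dimension $m$ since $\Pi^\vee$ is linearly independent. I would then choose a subspace $\h'$ with $V\subseteq\h'\subseteq\h$, $\dim\h'=2m-\ell'$, and such that the restrictions $\{\alpha_i|_{\h'}\mid i\in J\}$ are still linearly independent. Existence of $\h'$ is a routine dimension count: inside the annihilator $V^\perp\subseteq\h^*$, the functionals $\gamma_c=\sum_{i\in J}c_i\alpha_i$ that vanish on $V$ form a subspace of dimension $m-\ell'$, and one takes the annihilator of $\h'$ to be any complement of that subspace in $V^\perp$. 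One then checks that $(\h',\{\alpha_i|_{\h'}\},\{\alpha_i^\vee\})_{i\in J}$ is a realization of $B$ of dimension $2m-\ell'$, hence a minimal realization (cf.\ \cite{K}); we may therefore use it to construct $\wh{\G}(B)$.

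Using this realization together with $e_i\mapsto e_i$ and $f_i\mapsto f_i$ for $i\in J$, I would define a homomorphism $\pi\colon\wh{\G}(B)\to\G(A)$. The defining relations \eqref{KM:aux:01} of $\wh{\G}(B)$ map to valid identities in $\G(A)$ exactly because $B$ is the submatrix of $A$ indexed by $J$ and $\h'$ realizes $B$. Let $\mf{g}'=\pi(\wh{\G}(B))$. Since $\pi$ restricts to the identity on the chosen Cartan, its kernel meets $\h'$ trivially and is stable under the $\h'$-action, so $\mf{g}'$ is a quotient of $\wh{\G}(B)$ inheriting a triangular decomposition $\mf{g}'=(\mf{n}')^{+}\oplus\h'\oplus(\mf{n}')^{-}$. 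It then remains only to verify condition (H) for $\mf{g}'$: once that holds, \lemref{KM:lem:01} yields $\mf{g}'\cong\G(B)$ sitting inside $\G(A)$, which is the asserted inclusion.

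The hard part will be checking (H), equivalently (N) via \lemref{lem:H=N}, for $\mf{g}'$; the subtlety is that (N) for $\mf{g}'$ only permits testing a root vector against the $f_j$ with $j\in J$, whereas $\G(A)$ supplies all $f_i$ with $i\in I$. The key observation is that for $i\in I\setminus J$ the element $f_i$ commutes with every generator $e_j$ ($j\in J$), since $[e_j,f_i]=\delta_{ij}\alpha_i^\vee=0$; because $\mathrm{ad}\,f_i$ is a derivation it then annihilates the whole subalgebra $(\mf{n}')^{+}$ generated by $\{e_j\mid j\in J\}$. Hence if a nonzero root vector $x\in(\mf{n}')^{+}$ satisfied $[x,f_j]=0$ for all $j\in J$, it would also satisfy $[x,f_i]=0$ for all $i\in I$, contradicting property (N) of $\G(A)$ (which holds since $\G(A)$ satisfies (H) by construction). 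The argument for $(\mf{n}')^{-}$ is symmetric, so $\mf{g}'$ satisfies (N), and the proof concludes as above.
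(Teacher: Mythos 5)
Your proposal is correct and follows essentially the same route as the paper's own proof: choose a subspace $\h'\subseteq\h$ containing the coroots $\alpha_j^\vee$ ($j\in J$) giving a minimal realization of $B$ (the paper cites \cite[Exercise 1.2]{K} where you give the dimension count explicitly), take the subalgebra generated by $\h'$ and $\{e_j,f_j\mid j\in J\}$, verify (N) via the key observation that $[x,f_i]=0$ automatically for $i\notin J$ since $[e_j,f_i]=0$, and conclude with Lemmas~\ref{lem:H=N} and \ref{KM:lem:01}.
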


\begin{proof}
Suppose that $B$ has rank $r$.  By \cite[Exercise 1.2]{K} we can find $\h'\subset \h$ with $\dim\h'=2|J|-r$ such that $\h'$ contains $\{\,\alpha^\vee_j\,|\,j\in J\,\}$ and $\{\,\alpha_j|_{\h'}\,|\,j\in J\,\}$ is linearly independent, so that we have a minimal realization of $B$. Let $\G$ be the subalgebra of $\G(A)$ generated by $\h'$ and $\{\,e_i , f_i\,|\, i\in J\,\}$. It suffices to prove that $\G$ satisfies the condition
(N).  First suppose that $x$ is a positive root vector in $\G$ such
that $[x,f_j]=0$  for all $j\in J$.  Since  $x$ is generated by the
$e_j$'s with $j\in J$, we must have $[x,f_i]=0$ for $i\not\in J$,
which implies that $x=0$. The case when $x$ is a negative root
vector is analogous. Now we apply Lemmas \ref{lem:H=N} and
\ref{KM:lem:01} and conclude that $\G\cong\G(B)$.
\end{proof}

Recall the following, which describes the effect of an odd
reflection on a fundamental system of a contragredient Lie
superalgebra (cf.~\cite{PS}).

\begin{prop}\cite[Lemma 1.2]{KW2}\label{prop:odd:reflec}
Suppose that $A=(a_{ij})$ satisfies $a_{ij}=0$ if and only if
$a_{ji}=0$. Let $\alpha_s$ be an odd isotropic simple root of the
fundamental system $\Pi$ associated with $A$ of a contragredient Lie
superalgebra $\G(A)$. The simple roots and coroots of the
fundamental system $\Pi'$, obtained by applying the odd reflection
associated with $\alpha_s$ to $\Pi$, are precisely as follows:
\begin{itemize}
\item[(1)] $-\alpha_s\in\Pi'$ with corresponding coroot $-\alpha^\vee_s$,

\item[(2)]
$\alpha_i\in\Pi'$ with corresponding coroot $\alpha^\vee_i$ for
$i\not=s$ with $a_{is}=0$,

\item[(3)] $\alpha_i+\alpha_s\in\Pi'$ with corresponding coroot
$\alpha^\vee_i+\frac{a_{is}}{a_{si}}\alpha^\vee_s$  for $i\not=s$ with $a_{is}\not=0$.
\end{itemize}
\end{prop}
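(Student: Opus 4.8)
The plan is to realize the odd reflection $r_s$ at the level of Borel subalgebras and then read off the indecomposable elements of the new positive system together with their coroots. Since $\alpha_s$ is odd isotropic we have $a_{ss}=\langle\alpha_s,\alpha_s^\vee\rangle=0$, and the square-zero property $(\mathrm{ad}\,e_s)^2=\tfrac12\,\mathrm{ad}[e_s,e_s]=0$ (note $[e_s,e_s]$ super-commutes with every $f_k$, hence vanishes in $\G(A)$) forces $2\alpha_s\notin\Delta$ and more generally short $\alpha_s$-strings. Consequently the odd reflection at $\alpha_s$ replaces the positive system $\Delta^+$ by
\[
\Delta'^+=(\Delta^+\setminus\{\alpha_s\})\cup\{-\alpha_s\},
\]
which is again the positive system of a Borel subalgebra containing $\h$; that $\Pi'$, defined as the set of indecomposable elements of $\Delta'^+$, is a fundamental system with $\Delta'^+\subseteq\Z_{\ge0}\Pi'$ is part of the general theory of odd reflections. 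It then remains to identify $\Pi'$ explicitly and to compute the coroot $[e'_\beta,f'_\beta]$ attached to each $\beta\in\Pi'$ for suitably normalized new Chevalley generators.

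The one algebraic input I would isolate is the equivalence $[e_i,e_s]=0\Leftrightarrow a_{is}=0$ for $i\neq s$. If $a_{is}=0$, then by the standing hypothesis $a_{si}=0$ as well, and a direct check shows $[e_i,e_s]$ super-commutes with every $f_k$; hence it generates an ideal meeting $\h$ trivially and so vanishes in $\G(A)$. Conversely, the super Jacobi identity gives
\[
[[e_i,e_s],f_i]=(-1)^{p(e_i)}a_{is}e_s,
\]
which is nonzero when $a_{is}\neq0$, so that $\alpha_i+\alpha_s\in\Delta$ precisely in that case. Alternatively one may pass, via Proposition~\ref{prop:KM:embed}, to the rank-two subalgebra attached to $\left(\begin{smallmatrix}a_{ii}&a_{is}\\a_{si}&0\end{smallmatrix}\right)$ and argue there.

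With these facts the three cases follow by testing indecomposability in $\Delta'^+$. The root $-\alpha_s$ is indecomposable because every other member of $\Delta'^+$ lies in $\Delta^+$ and hence has non-negative $\Pi$-coordinates; following the usual reflection convention (negating the $s$-th datum) its coroot is $-\alpha_s^\vee$. For $i\neq s$ with $a_{is}=0$ we have $\alpha_i+\alpha_s\notin\Delta$, so $\alpha_i$ cannot be split off $-\alpha_s$ and stays simple with unchanged coroot $[e_i,f_i]=\alpha_i^\vee$. For $i\neq s$ with $a_{is}\neq0$ one has the decomposition $\alpha_i=(\alpha_i+\alpha_s)+(-\alpha_s)$ inside $\Delta'^+$, so $\alpha_i$ is no longer simple, whereas $\alpha_i+\alpha_s$ is indecomposable (a putative splitting would require the excluded $\alpha_s\in\Delta'^+$ or the non-root $\alpha_i+2\alpha_s$). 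Taking $e'=[e_i,e_s]$ and $f'$ proportional to $[f_i,f_s]$ as the new Chevalley generators, the super Jacobi identity yields
\[
[[e_i,e_s],[f_i,f_s]]=(-1)^{p(e_i)}\big(a_{si}\alpha_i^\vee+a_{is}\alpha_s^\vee\big),
\]
so that after normalizing $f'$ by $(-1)^{p(e_i)}/a_{si}$ the coroot of $\alpha_i+\alpha_s$ is $\alpha_i^\vee+\tfrac{a_{is}}{a_{si}}\alpha_s^\vee$, as asserted.

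The step I expect to require the most care is this last coroot computation in case (3): the bracket $[[e_i,e_s],[f_i,f_s]]$ must be expanded with the correct super signs, and one should verify that the resulting data form a consistent realization (e.g.\ that the pairings $\langle\alpha'_j,\alpha_k'^\vee\rangle$ reproduce the reflected Cartan matrix, which also pins down the sign convention making the coroot of $-\alpha_s$ equal to $-\alpha_s^\vee$). The only other non-formal point is the claim that $\Pi'$ genuinely spans $\Delta'^+$ over $\Z_{\ge0}$; I would either invoke the general odd-reflection formalism for this or prove it directly by induction on height, using the short-string bound above to control the $\alpha_s$-coordinate of an arbitrary positive root.
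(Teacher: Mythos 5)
The paper offers no proof of this proposition for you to be compared against: it is quoted verbatim from \cite[Lemma 1.2]{KW2} (with a pointer to \cite{PS}), so any argument is ``different from the paper's.'' Judged on its own, your proposal is essentially correct, and its computational core checks out. The master identity is $[[e_i,e_s],f_k]=-\delta_{sk}a_{si}e_i+(-1)^{p(e_i)}\delta_{ik}a_{is}e_s$: it gives the vanishing of $[e_i,e_s]$ when $a_{is}=a_{si}=0$ (a root vector of $\mf{n}^+$ killed by all $f_k$ is zero, which is exactly condition (N), satisfied by $\G(A)$ via \lemref{lem:H=N}), your identity $[[e_i,e_s],f_i]=(-1)^{p(e_i)}a_{is}e_s$, and also $[[e_i,e_s],f_s]=-a_{si}e_i$; from these one confirms $[[e_i,e_s],[f_i,f_s]]=(-1)^{p(e_i)}\bigl(a_{si}\alpha_i^\vee+a_{is}\alpha_s^\vee\bigr)$, so your normalization of $f'$ does yield the coroot $\alpha_i^\vee+\tfrac{a_{is}}{a_{si}}\alpha_s^\vee$. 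The indecomposability analysis is likewise sound, with $\alpha_i+2\alpha_s\notin\Delta$ because $[[e_i,e_s],e_s]=0$. Your treatment of the coroot of $-\alpha_s$ is also the right one: any choice $e_s'=cf_s$, $f_s'=de_s$ gives $[e_s',f_s']=cd\,\alpha_s^\vee$, so $-\alpha_s^\vee$ is a normalization ($cd=-1$), namely the convention of the cited statement, not something forced.

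The one substantive deferral is the point you flag at the end: your argument shows the listed roots are indecomposable in $\Delta'^+$, but not that they exhaust the indecomposables, equivalently that $\Delta'^+\subseteq\Z_+\Pi'$. This is not automatic: a root $\beta\in\Delta^+$ all of whose decompositions in $\Delta^+$ pass through $\alpha_s$ could a priori become a new simple root, and ruling this out is precisely the nontrivial content of the odd-reflection formalism you invoke. If you want to close this gap inside the paper's own toolkit rather than by citation, a cleaner route is available: take your new generators and coroots ($e_s'=f_s$, $f_s'=-e_s$; $e_i'=e_i$, $f_i'=f_i$ for $a_{is}=0$; $e_i'=[e_i,e_s]$, $f_i'=\tfrac{(-1)^{p(e_i)}}{a_{si}}[f_i,f_s]$ for $a_{is}\neq0$), verify the relations \eqref{KM:aux:01} for them (the cross terms $[e_i',f_j']$ with $i\neq j$ vanish by weight considerations and the identities above, e.g.\ $[[e_i,e_s],e_s]=0$ and its analogue for the $f$'s), observe that they regenerate $\G(A)$ (e.g.\ $e_i=-a_{si}^{-1}[e_i',e_s']$), and then apply \lemref{KM:lem:01} together with condition (H), which $\G(A)$ satisfies. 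This identifies $\G(A)$ as the contragredient superalgebra built on the new data and establishes in one stroke that $\Pi'$ is a fundamental system with exactly the stated simple roots and coroots, bypassing the indecomposable-element analysis altogether.
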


An integer matrix $A=(a_{ij})_{i,j\in I}$ is called a {\em super generalized Cartan
matrix (abbreviated} SGCM) if it satisfies the following conditions:
\begin{itemize}
\item[(C0)] $a_{ii}=2$  for all $i\in I_{\ov{0}}$;

\item[(C1)] $a_{ii}=2$ or $0$ for all  $i\in I_{\ov{1}}$;

\item[(C2)]
If $a_{ii}=2$ then, for all $j\neq i$, $a_{ij} \in -\Z_+$
for $i\in I_{\ov{0}}$ and $a_{ij}\in -2\Z_+$ for $i\in I_{\ov{1}}$;

\item[(C3)] $a_{ij}=0$ if and only if $a_{ji}=0$.
\end{itemize}

Following \cite{vdL} we call the corresponding contragredient Lie
superalgebra $\G(A)$ the {\em Kac-Moody Lie superalgebra} associated
with a SGCM $A$.
A SGCM $A=(a_{ij})_{i,j\in I}$
and the corresponding Kac-Moody Lie superalgebra $\G(A)$ are called {\em anisotropic}
if $A$ satisfies
the following condition (stronger than (C0) and (C1) above):
\vspace{.1cm}

$\text{(C$1'$)} \;\; a_{ii}=2 \text{ for all  } i\in I.$

\noindent Note that it is possible that $a_{ij}>0$ for $i\neq j$ for
a SGCM, which occurs only when
$a_{ii}=0$. In particular, an anisotropic  Kac-Moody Lie
superalgebra does not have a positive off-diagonal entry in its SGCM.  Also, note that if
$I_{\ov{1}}=\emptyset$, then $A$ is a generalized Cartan matrix (simply GCM)
as defined in \cite[\S 1.1]{K}, and $\G(A)$ is the {\em Kac-Moody
Lie algebra} associated with $A$.

\begin{rem}
We would like to point out that the definition of a Kac-Moody Lie superalgebra in \cite{vdL}, and which is the one used in this paper, differs from the one in \cite{S}  (see also \cite{Hoyt,HS}). Indeed, the Kac-Moody Lie superalgebra in this paper in the terminology of \cite{S} is referred to as a regular admissible contragredient Lie superalgebra.

\end{rem}

\begin{example}\label{ex:except:simples}
Consider the three simple finite-dimensional exceptional Lie
superalgebras $G(3)$, $F(3|1)$ (sometimes denoted by $F(4)$), and
$D(2|1,\alpha)$ for $\alpha\in\C\setminus\{0,-1\}$. According to
\cite[Proposition 2.5.4]{K1}, we can choose the following
SGCM's for their distinguished
fundamental systems (in contrast to the matrices in loc.~cit.~we
have rescaled the odd isotropic simple root by $-1$):
\begin{align*}
\begin{pmatrix}
\ \ 0&-1&\ \ 0\ \\
-1&\ \ 2&\!\!-3\\
\ \  0&-1&\ \ 2\
\end{pmatrix},
\begin{pmatrix}
\ \ 0&-1&\ \ 0&\ \ 0\ \\
-1&\ \ 2&-2&\ \ 0\ \\
\ \ 0&-1&\ \ 2&\!\!-1 \\
\ \ 0&\ \ 0&-1&\ \ 2\
\end{pmatrix},
\begin{pmatrix}
\ \ 0&-1&\!\!-\alpha\\
-1&\ \ 2&\ \ 0\ \\
-1&\ \ 0&\ \ 2\
\end{pmatrix},
\end{align*}
where only the first row index has parity $\ov{1}$. Note that if we
replace the zero diagonal entries in the above matrices by $2$ and
change the parity of the corresponding row index, then the above
matrices become symmetrizable generalized Cartan matrices so that
the resulting Lie algebras are symmetrizable Kac-Moody Lie algebras.
(In the case of $D(2|1,\alpha)$, we need to assume that
$\alpha\in\N$.)
\end{example}

\begin{rem}\label{rem:Serre relations}
Suppose that $A=(a_{ij})_{i,j\in I}$ is a SGCM.  Then in $\G(A)$ we have the following relations for $i,j\in
I$ with $i\neq j$ (\cite[Proposition 3.3]{vdL}):
\begin{equation*}
\begin{split}
{\rm ad}(e_i)^{1-a_{ij}}(e_j) &=0, \ \ \ \text{if $a_{ii}=2$},\\
{\rm ad}(e_i)^{m(j)}(e_j)&=0, \ \ \ \text{if $a_{ii}=0$},\\
\end{split}
\end{equation*}
where $m(j)=1$ if $a_{ij}=0$ and $m(j)=2$ if $a_{ij}\neq 0$.

\end{rem}

\section{The Lie superalgebras $\tilde{\G}$, $\SG$, and $\G$}
 \label{Three Lie superalgebras}

Let $I$ be a $\Z_2$-graded set and let $A=(a_{ij})_{i,j\in I}$ be a
matrix with $a_{ij}\in \Z$ and $a_{ii}=2$ or $0$ for $i,j\in I$.  We
draw the corresponding Dynkin diagram as follows: The vertices of
the Dynkin diagram are parametrized by $I$, and denoted by
$\bigcirc$ (resp. $
\begin{picture}(2,2)\setlength{\unitlength}{0.14in}
\put(0.3,0.3){\circle*{1}}
\end{picture}
$\ \ )
when $a_{ii}=2$ with $i  \in I_{\ov{0}}$ (resp. $i\in
I_{\ov{1}}$), and by $\bigotimes$ when $a_{ii}=0$, for $i\in I_{\ov{1}}$.
Furthermore, for two vertices $i,j$ with $a_{ij}\not=0$ or $a_{ji}\neq 0$, we draw an
edge connecting them and label it with the pair of integers
$(a_{ij},a_{ji})$ as follows:
\begin{center}
\setlength{\unitlength}{0.25in}
\begin{picture}(4,2)
\put(0.45,0.85){$\bullet$}
\put(3.2,0.85){$\bullet$}
\put(0.45,0.25){$i$}
\put(3.2,0.25){$j$}
\put(1,1){\line(1,0){2}}
\put(1.1,1.2){$^{(a_{ij}\,,\,a_{ji})}$}
\end{picture}
\end{center}
where $\bullet$ denotes one of $\bigcirc$, $\ \begin{picture}(2,2)\setlength{\unitlength}{0.14in} \put(0.3,0.3){\circle*{1}}\end{picture}\ \ $, or $\bigotimes$.

For $r\in T_\infty:=\{-1\}\cup \frac{1}{2}\mathbb{N}$, we consider a
SGCM $(a_{st})$ labelled by the (tail)
set $T_r:=\{-1,\frac{1}{2},1,\frac{3}{2},\ldots,r\}$
with $a_{-1\,\frac{1}{2}}=a_{\frac{1}{2}\, -1}=1$, $a_{s\,
s+\hf}=a_{s+\hf\,s}=(-1)^{2s}$ for $s>0$, and $a_{st}=0$ elsewhere.
Also, by definition the parity of every $s\in T_r$ is $\ov{1}$. Let
\begin{center}
\hskip -4cm \setlength{\unitlength}{0.25in}
\begin{picture}(24,2)
\put(10.25,1){\makebox(0,0)[c]{\Large $\otimes$}}
\put(12.4,1){\makebox(0,0)[c]{\Large $\otimes$}}
\put(16.9,1){\makebox(0,0)[c]{$\cdots$}}
\put(10.5,1){\line(1,0){1.6}}
\put(10.9,1.3){\tiny{$_{(1,1)}$}}
\put(12.7,1){\line(1,0){1.5}}
\put(12.7,1.3){\tiny{$_{(-1,-1)}$}}
\put(14.8,1){\line(1,0){1.2}}
\put(15,1.3){\tiny{$_{(1,1)}$}}
\put(17.8,1){\line(1,0){1.4}}
\put(14.5,1){\makebox(0,0)[c]{\Large $\otimes$}}
\put(19.5,1){\makebox(0,0)[c]{\Large $\otimes$}}
\put(10.2,0.3){\makebox(0,0)[c]{\tiny $\alpha_{-1}$}}
\put(12.4,0.3){\makebox(0,0)[c]{\tiny $\alpha_{1/2}$}}
\put(14.4,0.3){\makebox(0,0)[c]{\tiny $\alpha_{1}$}}
\put(19.5,0.3){\makebox(0,0)[c]{\tiny $\alpha_{r}$}}
\end{picture}
\end{center}
be the (tail) Dynkin diagram associated to this SGCM $(a_{st})$ with a fundamental system
$\{\,\alpha_{-1},\alpha_{\hf},\alpha_1,\ldots,\alpha_r\}$
parametrized by $T_r$.

Suppose that a SGCM $B$ is given. We
denote its Dynkin diagram by \framebox{$\mf{B}$} and its fundamental
system by $\Pi_B$. We consider the following Dynkin diagram, which
we call the {\it head diagram}, by connecting a vertex $\bigotimes$
also indexed by $\alpha_{-1}$ to some vertices $\gamma_j$ in
\framebox{$\mf{B}$} with labels $(b_j, c_j)$ $(1\le j\le p)$:
 \vskip -2cm
\begin{equation}\label{Head diagram}
\text{ \hskip -2cm \setlength{\unitlength}{0.22in}
\begin{picture}(12,6)
\put(10.25,0.3){\makebox(0,0)[c]{\Large $\otimes$}}
\put(8,.3){\line(1,0){1.9}}
\qbezier(10.15,0.6)(9.7,2)(8,1.95)
\qbezier(10.15,0)(9.7,-1.35)(8,-1.35)
\put(8.2,2.3){\tiny{$_{(b_1,c_1)}$}}
\put(8.8,1.9){\tiny{$_{\vdots}$}}
\put(8.2,0.75){\tiny{$_{(b_j,c_j)}$}}
\put(8.8,0.35){\tiny{$_{\vdots}$}}
\put(8.2,-0.65){\tiny{$_{(b_p,c_p)}$}}
\put(8,-1.8){\line(0,1){4.4}}
\put(5,-1.8){\line(0,1){4.4}}
\put(8,2.6){\line(-1,0){3}}
\put(8,-1.8){\line(-1,0){3}}
\put(6.5,.3){\makebox(0,0)[c]{{\LARGE$\mf{B}$}}}
\put(10.6,-0.7){\makebox(0,0)[c]{\tiny $\alpha_{-1}$}}
\end{picture}}
\end{equation}
\vskip 1.2cm \noindent
Here,
$b_j=\langle\alpha_{-1},\gamma^\vee_j\rangle$, and
$c_j=\langle\gamma_j,\alpha^\vee_{-1}\rangle$ for $1\leq j\leq p$,
where it is understood that $\alpha_{-1}^{\vee}$ and $\gamma_j^\vee$
(respectively $\alpha_{-1}$ and $\gamma_j$) are the associated simple
coroots (respectively simple roots) in a minimal realization of \eqref{Head diagram}.
We denote by $A^{\texttt{hd}}$ the matrix
corresponding to the head diagram \eqref{Head diagram}.
Throughout the paper, we shall always
assume that $A^{\texttt{hd}}$ is a SGCM, or equivalently, for $1\leq j\leq p$,
\begin{align}\label{eq:pseudoCartan for hd}\tag{{\bf A}}
\begin{split}
&b_j,c_j\in\Z\setminus\{0\} ,\\
&\text{$b_j \in -\Z_+$,\ \
if  $\gamma_j
=\bigcirc$,}\\
&\text{$b_j \in -2\Z_+$,\ \
if  $\gamma_j
=\begin{picture}(2,2)\setlength{\unitlength}{0.14in}
\put(0.3,0.3){\circle*{1}}
\end{picture}$\ \ .}
\end{split}
\end{align}

\begin{example}\label{example:affineosp}
Suppose that \framebox{$\mf{B}$} is a  disjoint union of two
classical Dynkin diagrams of  type $C$ and type $B$:
\begin{center}
\hskip -4cm \setlength{\unitlength}{0.25in}
\begin{picture}(22,2)
\put(6.45,1){\makebox(0,0)[c]{$\bigcirc$}}
\put(8.2,1){\makebox(0,0)[c]{$\bigcirc$}}
\put(10.3,1){\makebox(0,0)[c]{$\cdots$}}
\put(12.4,1){\makebox(0,0)[c]{$\bigcirc$}}
\put(18.5,1){\makebox(0,0)[c]{$\cdots$}}
\put(16.3,1){\makebox(0,0)[c]{$\bigcirc$}}
\put(14.6,1){\makebox(0,0)[c]{$\bigcirc$}}
\put(20.7,1){\makebox(0,0)[c]{$\bigcirc$}}
\put(6.7,1){\line(1,0){1.2}}
\put(8.5,1){\line(1,0){1.3}}
\put(10.7,1){\line(1,0){1.4}}
\put(6.6,1.5){\tiny{$_{(-1,-2)}$}}
\put(8.5,1.5){\tiny{$_{(-1,-1)}$}}
\put(10.7,1.5){\tiny{$_{(-1,-1)}$}}
\put(14.75,1.5){\tiny{$_{(-2,-1)}$}}
\put(16.5,1.5){\tiny{$_{(-1,-1)}$}}
\put(19.1,1.5){\tiny{$_{(-1,-1)}$}}
\put(14.87,1){\line(1,0){1.15}}
\put(16.55,1){\line(1,0){1.4}}
\put(19,1){\line(1,0){1.45}}
%

%
\put(12.5,0.3){\makebox(0,0)[c]{\tiny $\gamma_{1}$}}
\put(20.8,0.3){\makebox(0,0)[c]{\tiny $\gamma_2$}}
\end{picture}
\end{center}
\noindent Incorporating the lengths of the roots into the diagram and removing the labels we get the more familiar forms:
\begin{center}
\hskip -4cm \setlength{\unitlength}{0.25in}
\begin{picture}(22,2)
\put(6.5,1){\makebox(0,0)[c]{$\bigcirc$}}
\put(8.2,1){\makebox(0,0)[c]{$\bigcirc$}}
\put(10.3,1){\makebox(0,0)[c]{$\cdots$}}
\put(12.4,1){\makebox(0,0)[c]{$\bigcirc$}}
\put(18.5,1){\makebox(0,0)[c]{$\cdots$}}
\put(16.3,1){\makebox(0,0)[c]{$\bigcirc$}}
\put(14.6,1){\makebox(0,0)[c]{$\bigcirc$}}
\put(20.7,1){\makebox(0,0)[c]{$\bigcirc$}}
\put(6.7,0.8){\Large $\Longrightarrow$}
\put(8.5,1){\line(1,0){1.3}}
\put(10.7,1){\line(1,0){1.4}}
\put(14.85,0.8){\Large $\Longleftarrow$}
\put(16.55,1){\line(1,0){1.4}}
\put(19,1){\line(1,0){1.4}}
%
%
\put(12.5,0.3){\makebox(0,0)[c]{\tiny $\gamma_{1}$}}
\put(20.8,0.3){\makebox(0,0)[c]{\tiny $\gamma_2$}}
\end{picture}
\end{center}
\noindent Here we regard $\gamma_1$, and $\gamma_2$  as the two end vertices
of \framebox{$\mf{B}$}. We connect \framebox{$\mf{B}$} to a vertex
$\alpha_{-1}$ at $\gamma_1$ and $\gamma_2$ with labels
$(b_1,c_1)=(-1,-1)$ and $(b_2,c_2)=(-1,1)$, respectively, so that the resulting
head diagram is of the form:
\begin{center}
\hskip 0cm \setlength{\unitlength}{0.25in}
\begin{picture}(22,4)
\put(6.5,1){\makebox(0,0)[c]{$\bigcirc$}}
\put(8.2,1){\makebox(0,0)[c]{$\bigcirc$}}
\put(10.2,1){\makebox(0,0)[c]{$\cdots$}}
\put(12.4,1){\makebox(0,0)[c]{$\bigcirc$}}
\put(10.25,3){\makebox(0,0)[c]{$\cdots$}}
\put(8.2,3){\makebox(0,0)[c]{$\bigcirc$}}
\put(6.5,3){\makebox(0,0)[c]{$\bigcirc$}}
\put(12.4,3){\makebox(0,0)[c]{$\bigcirc$}}
\qbezier(12.65,3)(14,3)(14.4,2.3)
\qbezier(12.65,1)(14,1)(14.4,1.7)
\put(14.5,2){\makebox(0,0)[c]{$\bigotimes$}}
\put(6.75,1){\line(1,0){1.2}}
\put(8.45,1){\line(1,0){1.2}}
\put(10.6,1){\line(1,0){1.5}}
\put(6.6,1.5){\tiny{$_{(-2,-1)}$}}
\put(8.5,1.5){\tiny{$_{(-1,-1)}$}}
\put(10.75,1.5){\tiny{$_{(-1,-1)}$}}
\put(6.6,3.5){\tiny{$_{(-1,-2)}$}}
\put(8.5,3.5){\tiny{$_{(-1,-1)}$}}
\put(10.75,3.5){\tiny{$_{(-1,-1)}$}}
\put(6.75,3){\line(1,0){1.2}}
\put(8.45,3){\line(1,0){1.2}}
\put(10.6,3){\line(1,0){1.5}}
%
\put(12.8,3.4){\tiny{$_{(-1,-1)}$}}
\put(12.8,1.5){\tiny{$_{(-1,1)}$}}
\put(12.5,0.3){\makebox(0,0)[c]{\tiny $\gamma_{2}$}}
\put(12.5,2.3){\makebox(0,0)[c]{\tiny $\gamma_1$}}
\put(14.7,1.3){\makebox(0,0)[c]{\tiny $\alpha_{-1}$}}
\end{picture}
\end{center}
\noindent Incorporating the lengths of the roots (and removing the labels) we can stretch it into its more familiar form, which is the Dynkin diagram of the affine Lie superalgebra of type $B$:
{\begin{center}
\hskip -4cm \setlength{\unitlength}{0.25in}
\begin{picture}(24,2)
\put(6.5,1){\makebox(0,0)[c]{$\bigcirc$}}
\put(8.2,1){\makebox(0,0)[c]{$\bigcirc$}}
\put(10.26,1){\makebox(0,0)[c]{$\cdots$}}
\put(12.4,1){\makebox(0,0)[c]{$\bigcirc$}}
\put(16.3,1){\makebox(0,0)[c]{$\bigcirc$}}
\put(18.5,1){\makebox(0,0)[c]{$\cdots$}}
\put(6.7,0.8){\Large $\Longrightarrow$}
\put(20.8,0.8){\Large $\Longrightarrow$}
\put(22.3,1){\makebox(0,0)[c]{$\bigcirc$}}
%
%
\put(8.5,1){\line(1,0){1.3}}
\put(10.6,1){\line(1,0){1.5}}
\put(12.7,1){\line(1,0){1.5}}
\put(14.8,1){\line(1,0){1.2}}
\put(16.55,1){\line(1,0){1.4}}
\put(18.9,1){\line(1,0){1.4}}
%
\put(14.5,1){\makebox(0,0)[c]{$\bigotimes$}}
\put(20.6,1){\makebox(0,0)[c]{$\bigcirc$}}
\put(14.5,0.3){\makebox(0,0)[c]{\tiny $\alpha_{-1}$}}
\put(12.4,0.3){\makebox(0,0)[c]{\tiny $\gamma_{1}$}}
\put(16.3,0.3){\makebox(0,0)[c]{\tiny $\gamma_2$}}
\end{picture}
\end{center}}

\noindent

Similarly, one constructs the Dynkin diagram of the affine Lie
superalgebra of type $D$ as a head diagram \eqref{Head diagram} with
\framebox{$\mf{B}$} being a disjoint union of two classical Dynkin
diagrams of type $D$ and type $C$.
\end{example}

For $r\in T_\infty$, we consider a new Dynkin diagram by merging the
head diagram with the tail diagram parametrized by $T_r$ as follows
(or by identifying the $\alpha_{-1}$  in the head and tail
diagrams):\vskip -2.1cm
\begin{equation}\label{DG diagram of finite rank}
\text{
\hskip -2cm \setlength{\unitlength}{0.21in}
\begin{picture}(23,6)
\put(10.25,0){\makebox(0,0)[c]{\Large $\otimes$}}
\put(12.4,0){\makebox(0,0)[c]{\Large $\otimes$}}
\put(16.9,0){\makebox(0,0)[c]{$\cdots$}}
\put(8,0){\line(1,0){1.9}}
\qbezier(10.15,0.3)(9.7,1.7)(8,1.65)
\qbezier(10.15,-0.3)(9.7,-1.65)(8,-1.65)
\put(10.55,0){\line(1,0){1.5}}
\put(12.7,0){\line(1,0){1.5}}
\put(14.8,0){\line(1,0){1.2}}
\put(17.65,0){\line(1,0){1.5}}
\put(8.2,2){\tiny{$_{(b_1,c_1)}$}}
\put(8.8,1.6){\tiny{$_{\vdots}$}}
\put(8.2,0.45){\tiny{$_{(b_j,c_j)}$}}
\put(8.8,0.05){\tiny{$_{\vdots}$}}
\put(8.2,-0.95){\tiny{$_{(b_p,c_p)}$}}
\put(14.5,0){\makebox(0,0)[c]{\Large $\otimes$}}
\put(19.5,0){\makebox(0,0)[c]{\Large $\otimes$}}
\put(8,-2.1){\line(0,1){4.4}}
\put(5,-2.1){\line(0,1){4.4}}
\put(8,2.3){\line(-1,0){3}}
\put(8,-2.1){\line(-1,0){3}}
\put(6.5,0){\makebox(0,0)[c]{{\LARGE$\mf{B}$}}}
\put(10.7,-1){\makebox(0,0)[c]{\tiny $\alpha_{-1}$}}
\put(12.6,-1){\makebox(0,0)[c]{\tiny $\alpha_{1/2}$}}
\put(14.7,-1){\makebox(0,0)[c]{\tiny $\alpha_{1}$}}
\put(19.6,-1){\makebox(0,0)[c]{\tiny $\alpha_{r}$}}
\end{picture}}
\end{equation}
\vskip 1.3cm
Let $\DG_r^\circ$ be the Kac-Moody Lie superalgebra associated with
the above Dynkin diagram, whose fundamental system is $\Pi_B \cup
\{\,\alpha_i \mid i\in T_r\,\}$.
Since $\DG_r^\circ\subseteq\DG_{r+\hf}^\circ$ by Proposition
\ref{prop:KM:embed}, we have a well-defined Lie superalgebra
$\DG^\circ=\bigcup_{r}\DG_r^\circ$. It is easy to see that
$\DG^\circ$ satisfies (N) and hence (H) by Lemma \ref{lem:H=N}.
Thus, by abuse of terminology one may regard $\DG^\circ$ as the
Kac-Moody Lie superalgebra (of infinite rank) associated with the
following Dynkin diagram:\vskip -1.8cm

\begin{equation}\label{DG diagram}
\text{\hskip -2cm \setlength{\unitlength}{0.22in}
\begin{picture}(24,5.5)
\put(10.25,0.3){\makebox(0,0)[c]{\Large $\otimes$}}
\put(12.4,0.3){\makebox(0,0)[c]{\Large $\otimes$}}
\put(17,.3){\makebox(0,0)[c]{$\cdots$}}
\put(22,.3){\makebox(0,0)[c]{$\cdots$}}
\put(8,.3){\line(1,0){1.9}}
\qbezier(10.15,0.6)(9.7,2)(8,1.95)
\qbezier(10.15,0)(9.7,-1.35)(8,-1.35)
\put(10.52,.3){\line(1,0){1.55}}
\put(12.7,.3){\line(1,0){1.45}}
\put(14.8,.3){\line(1,0){1.5}}
\put(17.65,.3){\line(1,0){1.55}}
\put(19.8,.3){\line(1,0){1.5}}
\put(8.2,2.3){\tiny{$_{(b_1,c_1)}$}}
\put(8.8,1.9){\tiny{$_{\vdots}$}}
\put(8.2,0.75){\tiny{$_{(b_j,c_j)}$}}
\put(8.8,0.35){\tiny{$_{\vdots}$}}
\put(8.2,-0.65){\tiny{$_{(b_p,c_p)}$}}
\put(14.5,.3){\makebox(0,0)[c]{\Large $\otimes$}}
\put(19.5,.3){\makebox(0,0)[c]{\Large $\otimes$}}
\put(8,-1.8){\line(0,1){4.4}}
\put(5,-1.8){\line(0,1){4.4}}
\put(8,2.6){\line(-1,0){3}}
\put(8,-1.8){\line(-1,0){3}}
\put(6.5,.3){\makebox(0,0)[c]{{\LARGE$\mf{B}$}}}
\put(10.6,-0.7){\makebox(0,0)[c]{\tiny $\alpha_{-1}$}}
\put(12.6,-0.7){\makebox(0,0)[c]{\tiny $\alpha_{\hf}$}}
\put(14.7,-0.7){\makebox(0,0)[c]{\tiny $\alpha_{1}$}}
\put(19.6,-0.7){\makebox(0,0)[c]{\tiny $\alpha_{r}$}}
\put(10.9,0.8){\tiny{$_{(1,1)}$}}
\put(12.65,0.8){\tiny{$_{(-1,-1)}$}}
\put(15,0.8){\tiny{$_{(1,1)}$}}
\end{picture}}
\end{equation}\vskip 1.3cm
\noindent We define $\DG:=\DG^\circ \oplus \C d$, which is an extension of $\DG^\circ$ by an outer derivation of $\DG^\circ$ with $[d,e_{\alpha_{-1}}]=-e_{\alpha_{-1}}$ and $[d,f_{\alpha_{-1}}]=f_{\alpha_{-1}}$ but $[d,x]=0$ for the other generators $x$ of $\DG^\circ$. Here $e_{\alpha_{-1}}$ and $f_{\alpha_{-1}}$ denote the positive and negative generators in $\DG^\circ$ corresponding to $\alpha_{-1}$, respectively. Note that ${\rm ad}(d)$ is well-defined since the ideals which define $\DG^\circ_r$ are graded.
Also set $\DG_r:=\DG_r^\circ \oplus \C d$. Let $\wt{\Pi} :=\Pi_B
\cup \{\,\alpha_i\mid i\in T_\infty\,\}$ denote  this fundamental
system of $\DG$, and let $\wt{\h}$ be the Cartan subalgebra of
$\DG$, which is a direct sum of the Cartan subalgebra of $\DG^\circ$ and $\mathbb{C}d$. \vskip 2mm

Set
$$
\beta_{-1}=\alpha_{-1}+\alpha_{\hf}, \ \
\beta_{r}=\alpha_r+\alpha_{r+\hf}\ \ \ \ (r\in\tfrac{1}{2}\N).$$

For $n\in \N$, consider the following Dynkin diagram:
\vskip -1.7cm
\begin{equation}\label{G diagram}
\text{\hskip -2cm \setlength{\unitlength}{0.22in}
\begin{picture}(24,5.5)
\put(10.25,0.3){\makebox(0,0)[c]{$\bigcirc$}}
\put(12.4,0.3){\makebox(0,0)[c]{$\bigcirc$}}
\put(17,.3){\makebox(0,0)[c]{$\cdots$}}
\put(8,.3){\line(1,0){1.9}}
\qbezier(10.15,0.6)(9.7,2)(8,1.95)
\qbezier(10.15,0)(9.7,-1.35)(8,-1.35)
\put(10.52,.3){\line(1,0){1.55}}
\put(12.7,.3){\line(1,0){1.45}}
\put(14.8,.3){\line(1,0){1.5}}
\put(17.65,.3){\line(1,0){1.55}}
\put(8.2,2.3){\tiny{$_{(b_1,c_1)}$}}
\put(8.8,1.9){\tiny{$_{\vdots}$}}
\put(8.2,0.75){\tiny{$_{(b_j,c_j)}$}}
\put(8.8,0.35){\tiny{$_{\vdots}$}}
\put(8.2,-0.65){\tiny{$_{(b_p,c_p)}$}}
\put(14.5,.3){\makebox(0,0)[c]{$\bigcirc$}}
\put(19.5,.3){\makebox(0,0)[c]{$\bigcirc$}}
\put(8,-1.8){\line(0,1){4.4}}
\put(5,-1.8){\line(0,1){4.4}}
\put(8,2.6){\line(-1,0){3}}
\put(8,-1.8){\line(-1,0){3}}
\put(6.5,.3){\makebox(0,0)[c]{{\LARGE$\mf{B}$}}}
\put(10.6,-0.7){\makebox(0,0)[c]{\tiny $\beta_{-1}$}}
\put(12.6,-0.7){\makebox(0,0)[c]{\tiny $\beta_{1}$}}
\put(14.7,-0.7){\makebox(0,0)[c]{\tiny $\beta_{2}$}}
\put(19.6,-0.7){\makebox(0,0)[c]{\tiny $\beta_{n-1}$}}
\put(10.5,0.8){\tiny{$_{(-1,-1)}$}}
\put(12.65,0.8){\tiny{$_{(-1,-1)}$}}
\put(14.7,0.8){\tiny{$_{(-1,-1)}$}}
\put(17.7,0.8){\tiny{$_{(-1,-1)}$}}
\end{picture}}
\end{equation}\vskip 1.3cm
\noindent
When $n=1$ we mean the diagram \eqref{G diagram} with no
$\beta_i$'s for $i\ge 1$. We note that the corresponding
contragredient Lie superalgebra, denoted by $\G^\circ_n$, is a
Kac-Moody Lie superalgebra if and only if $c_j\in-\Z$ for
$1\leq j\leq p$. Similarly, let $\G^\circ =\bigcup_n \G^\circ_n$ and
let $\G:=\G^\circ\oplus\C d$ be its extension by an outer derivation $d$ with $[d,e_{\beta_{-1}}]=-e_{\beta_{-1}}$, $[d,f_{\beta_{-1}}]=f_{\beta_{-1}}$ and $[d,x]=0$ for the other generators. We note that $d$ is possibly an inner derivation plus a central element. Set
$\G_n:=\G^\circ_n \oplus\C d$. Let $\Pi :=\Pi_B \cup \{\beta_{-1}\}
\cup \{\,\beta_j \mid j \in  \N \,\}$ denote this fundamental system
of $\G$, and let $\h$ be the Cartan subalgebra of $\G$.

On the other hand, for $n\in \N\cup\{-1\}$, let $\SG^\circ_n$ be the
Kac-Moody Lie superalgebra associated with the Dynkin diagram below:
\vskip -1.7cm
\begin{equation}\label{SG diagram}
\text{\hskip -2cm \setlength{\unitlength}{0.22in}
\begin{picture}(24,5.5)
\put(10.25,0.3){\tiny\makebox(0,0)[c]{$\bigotimes$}}
\put(12.4,0.3){\makebox(0,0)[c]{$\bigcirc$}}
\put(17,.3){\makebox(0,0)[c]{$\cdots$}}
\put(8,.3){\line(1,0){1.9}}
\qbezier(10.15,0.6)(9.7,2)(8,1.95)
\qbezier(10.15,0)(9.7,-1.35)(8,-1.35)
\put(10.52,.3){\line(1,0){1.55}}
\put(12.7,.3){\line(1,0){1.45}}
\put(14.8,.3){\line(1,0){1.5}}
\put(17.65,.3){\line(1,0){1.55}}
\put(8.2,2.3){\tiny{$_{(b_1,c_1)}$}}
\put(8.8,1.9){\tiny{$_{\vdots}$}}
\put(8.2,0.75){\tiny{$_{(b_j,c_j)}$}}
\put(8.8,0.35){\tiny{$_{\vdots}$}}
\put(8.2,-0.65){\tiny{$_{(b_p,c_p)}$}}
\put(14.5,.3){\makebox(0,0)[c]{$\bigcirc$}}
\put(19.5,.3){\makebox(0,0)[c]{$\bigcirc$}}
\put(8,-1.8){\line(0,1){4.4}}
\put(5,-1.8){\line(0,1){4.4}}
\put(8,2.6){\line(-1,0){3}}
\put(8,-1.8){\line(-1,0){3}}
\put(6.5,.3){\makebox(0,0)[c]{{\LARGE$\mf{B}$}}}
\put(10.6,-0.7){\makebox(0,0)[c]{\tiny $\alpha_{-1}$}}
\put(12.6,-0.7){\makebox(0,0)[c]{\tiny $\beta_{\hf}$}}
\put(14.7,-0.7){\makebox(0,0)[c]{\tiny $\beta_{\frac32}$}}
\put(19.6,-0.7){\makebox(0,0)[c]{\tiny $\beta_{n-\frac{1}{2}}$}}
\put(10.7,0.8){\tiny{$_{(1,-1)}$}}
\put(12.65,0.8){\tiny{$_{(-1,-1)}$}}
\put(14.7,0.8){\tiny{$_{(-1,-1)}$}}
\put(17.7,0.8){\tiny{$_{(-1,-1)}$}}
\end{picture}}
\end{equation}\vskip 1.3cm
\noindent
When $n=-1$ we mean the diagram \eqref{SG diagram} with no
$\beta_r$'s for $r\ge 1/2$. Similarly, let $\SG^\circ =\bigcup_n
\SG^\circ_n$ and let $\SG:=\SG^\circ\oplus\C d$ be its extension
by $d$ as in the case of $\DG^\circ$. Set $\SG_n:=\SG^\circ_n \oplus\C d$. Let
$\ov{\Pi}:=\Pi_B \cup \{\alpha_{-1}\} \cup \{\,\beta_j \mid j \in
\hf +\Z_+\, \}$ denote this fundamental system of $\SG$, and let
$\ov{\h}$ be the Cartan subalgebra of $\SG$.

\begin{prop}\label{prop:GSG:sub}
$\G$ and $\SG$ can be naturally regarded as subalgebras of $\DG$.
\end{prop}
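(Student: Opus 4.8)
The plan is to realize $\SG$ and $\G$ as the images of explicit \emph{injective} Lie superalgebra homomorphisms into $\DG$, rather than as subdiagram subalgebras (so \propref{prop:KM:embed} does not apply directly, since the new simple roots are sums of old ones). The guiding identities are $\beta_{-1}=\alpha_{-1}+\alpha_{\hf}$ and $\beta_r=\alpha_r+\alpha_{r+\hf}$: since each $\alpha_s$ in the tail of $\DG$ is odd isotropic and two consecutive ones are joined, each such $\beta$ is an even real root of $\DG$ whose root vectors are obtained by bracketing the root vectors of its two constituents. I treat $\SG$ in detail; $\G$ is handled identically after pairing $(\alpha_{-1},\alpha_{\hf}),(\alpha_1,\alpha_{\frac32}),(\alpha_2,\alpha_{\frac52}),\dots$ instead of leaving $\alpha_{-1}$ separate.

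Concretely, I first build a homomorphism $\hat\phi\colon\wh\G(\ov A)\to\DG$ out of the algebra $\wh\G(\ov A)$ defined by the relations \eqref{KM:aux:01} alone (where $\ov A$ is the SGCM of $\SG$), so that the Serre relations need not be checked. On the head I send the Chevalley generators and the $\Pi_B$-coroots to themselves, I keep $e_{\alpha_{-1}},f_{\alpha_{-1}},\alpha_{-1}^\vee$, and for $r\in\hf+\Z_+$ I set
\begin{equation*}
e_{\beta_r}\longmapsto[e_{\alpha_r},e_{\alpha_{r+\hf}}],\quad
f_{\beta_r}\longmapsto \eta_r\,[f_{\alpha_{r+\hf}},f_{\alpha_r}],\quad
\beta_r^\vee\longmapsto \eta_r\,\big(\alpha_r^\vee+\alpha_{r+\hf}^\vee\big),
\end{equation*}
with the sign $\eta_r=a_{\alpha_r\alpha_{r+\hf}}=\pm1$. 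Verifying \eqref{KM:aux:01} then splits into three pieces. Abelianness of the image of $\ov{\h}$ is clear. The relation $[e_i,f_j]=\delta_{ij}\alpha_i^\vee$ holds off the diagonal because distinct $\beta$'s (and $\alpha_{-1}$) involve pairwise distinct tail indices, so every cross-bracket $[e_{\alpha},f_{\alpha'}]$ vanishes; on the diagonal $[e_{\beta_r},\eta_r[f_{\alpha_{r+\hf}},f_{\alpha_r}]]=\eta_r(\alpha_r^\vee+\alpha_{r+\hf}^\vee)$ is a short super-Jacobi computation expressing that $e_{\beta_r},f_{\beta_r},\beta_r^\vee$ form an $\mf{sl}(2)$-triple inside the $\mf{sl}(1|2)$-subalgebra generated by the two consecutive odd tail generators. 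Finally the weight relations $[h,e_i]=\langle\alpha_i,h\rangle e_i$ amount to checking that the $\DG$-pairings $\langle\beta_i,\beta_j^\vee\rangle$, $\langle\beta_r,\alpha_{-1}^\vee\rangle$, and the pairings with $\Pi_B$ reproduce exactly the entries of $\ov A$; this is read off from the tail data $a_{s\,s+\hf}=(-1)^{2s}$ and the head labels $(b_j,c_j)$, and one finds, for instance, the edge $\alpha_{-1}$–$\beta_{\hf}$ carrying $(1,-1)$ and consecutive $\beta$'s carrying $(-1,-1)$, matching \eqref{SG diagram} and \eqref{G diagram}.

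To make the weight relations literally meaningful I must fix a linear embedding $\ov{\h}\hookrightarrow\wt{\h}$ sending each coroot to the element above and compatible with all pairings $\langle\alpha_i,\,\cdot\,\rangle$; this is exactly where the outer derivation $d$ (and the associated $\epsilon$-type toral directions flagged in the introduction) supplies the missing directions. Granting this, $\hat\phi$ is injective on $\ov{\h}$ since $\{\alpha_{-1}^\vee\}\cup\{\alpha_r^\vee+\alpha_{r+\hf}^\vee\}\cup\Pi_B^\vee\cup\{d\}$ is linearly independent in $\wt{\h}$, so $\ker\hat\phi$ is an ideal meeting $\ov{\h}$ trivially and therefore lies in the maximal such ideal; hence $\hat\phi$ factors through the quotient $\SG^\circ$, and extends by $d\mapsto d$ (one checks $[d,-]$ is respected, e.g.\ $[d,[e_{\alpha_{-1}},e_{\alpha_{\hf}}]]=-[e_{\alpha_{-1}},e_{\alpha_{\hf}}]$ in the $\G$-case). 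Injectivity is then automatic: the kernel is an ideal of $\SG$ intersecting $\ov{\h}$ trivially, and $\SG^\circ$ satisfies (H), equivalently (N) by \lemref{lem:H=N}, so the kernel vanishes.

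The main obstacle is not the structural injectivity argument but the bookkeeping inside \eqref{KM:aux:01}: arranging the signs $\eta_r$ and the coroot normalizations so that simultaneously $[e_{\beta_r},f_{\beta_r}]$ is the prescribed coroot and $\langle\beta_r,\beta_r^\vee\rangle=2$ (the naive bracket produces $-\eta_r^{-1}\cdot2$, forcing the rescaling above), and confirming that all remaining off-diagonal pairings collapse to the type-$A$ labels $(-1,-1)$ with no spurious coupling between non-consecutive $\beta$'s. Since every element is finitely supported, it is cleanest to carry this out at finite rank, constructing compatible embeddings $\SG_n\hookrightarrow\DG$ and $\G_n\hookrightarrow\DG$ and then passing to the unions $\SG^\circ=\bigcup_n\SG^\circ_n$ and $\G^\circ=\bigcup_n\G^\circ_n$.
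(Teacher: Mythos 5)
Your first two paragraphs (the explicit assignment of generators and the verification of the relations \eqref{KM:aux:01}) are sound, and the sign issues you flag are indeed only bookkeeping. The gap is in the structural step, precisely where you write that $\ker\hat\phi\subseteq\mf{r}$ and ``hence $\hat\phi$ factors through the quotient $\SG^\circ$.'' This implication goes the wrong way: since $\SG^\circ=\wh{\G}(\ov{A})/\mf{r}$, factoring through $\SG^\circ$ requires $\mf{r}\subseteq\ker\hat\phi$, whereas you have established only the reverse containment. What your argument actually yields is that the image $\G'=\hat\phi\bigl(\wh{\G}(\ov{A})\bigr)\subseteq\DG$ is isomorphic to $\wh{\G}(\ov{A})/\ker\hat\phi$, a quotient by an ideal that is a priori strictly smaller than $\mf{r}$; such a $\G'$ surjects onto $\SG^\circ$ but need not contain a copy of it, so the proposition is not proved. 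To identify $\G'$ with $\SG^\circ$ you must show that $\G'$ satisfies condition (H) --- equivalently (N), by \lemref{lem:H=N} --- and then invoke \lemref{KM:lem:01}; equivalently, you must show $\hat\phi(\mf{r})=0$. This cannot be done by checking a list of relations: for contragredient superalgebras with isotropic odd simple roots, $\mf{r}$ is not generated by explicit Serre-type relations, which is exactly why the paper routes everything through condition (H).

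Moreover, verifying (N) for $\G'$ is genuinely obstructed in your setup. The trick in the proof of \propref{prop:KM:embed} (a positive root vector of the subalgebra killed by all $f_j$ with $j\in J$ is automatically killed by all $f_i$ with $i\in I$, hence zero) fails here, because your new positive generators are not simple root vectors of $\DG$: for instance $[[e_{\alpha_r},e_{\alpha_{r+\hf}}],f_{\alpha_r}]=\pm e_{\alpha_{r+\hf}}\neq 0$, so elements of $\G'\cap\wt{\mf{n}}^+$ do not commute with the discarded generators $f_{\alpha_s}$. The paper's proof supplies exactly the missing ingredient: by \propref{prop:odd:reflec}, a suitable sequence of odd reflections turns $\beta_{-1},\beta_1,\beta_2,\dots$ (resp.\ $\alpha_{-1},\beta_{\hf},\beta_{\frac32},\dots$) into genuine simple roots of a \emph{new fundamental system of $\DG$ itself} (at each finite rank), with respect to which your $\G'$ becomes a subdiagram subalgebra; then \propref{prop:KM:embed} --- whose proof is where (N) is actually checked --- applies, and one passes to the union over finite ranks and handles $d$ as you do. So your explicit map is compatible with the paper's construction, but without the odd-reflection step (or some substitute proof of (H) for the image) the identification of the image with $\SG^\circ$, which is the entire content of the proposition, is missing.
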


\begin{proof}
We shall prove the case of $\G$, as the proof for $\SG$ is similar.
Consider the subdiagram:
\begin{center}
\hskip 0cm \setlength{\unitlength}{0.23in}
\begin{picture}(22,3.3)
\put(2.85,2.6){\makebox(0,0)[c]{$\bullet$}}
\put(2.85,1){\makebox(0,0)[c]{$\bullet$}}
\put(2.85,-0.6){\makebox(0,0)[c]{$\bullet$}}
\put(5.25,1){\makebox(0,0)[c]{\Large $\otimes$}}
\put(7.4,1){\makebox(0,0)[c]{\Large $\otimes$}}
\put(11.6,1){\makebox(0,0)[c]{\Large $\otimes$}}
\put(9.5,1){\makebox(0,0)[c]{\Large $\otimes$}}
\put(13.9,1){\makebox(0,0)[c]{\Large $\otimes$}}
\put(16,1){\makebox(0,0)[c]{\Large $\otimes$}}
\put(18.6,1){\makebox(0,0)[c]{$\cdots$}}
\qbezier(5.2,1.3)(4.9,2.6)(3.15,2.6)
\qbezier(5.2,0.7)(4.9,-0.5)(3.15,-0.6)
\put(3.35,2.9){\tiny{$_{(b_1,c_1)}$}}
\put(3.7,2.45){\tiny{$_{\vdots}$}}
\put(3.35,1.4){\tiny{$_{(b_j,c_j)}$}}
\put(3.7,1.05){\tiny{$_{\vdots}$}}
\put(3.35,0.1){\tiny{$_{(b_p,c_p)}$}}
\put(3.2,1){\line(1,0){1.7}}
\put(5.55,1){\line(1,0){1.55}}
\put(7.7,1){\line(1,0){1.5}}
\put(9.8,1){\line(1,0){1.5}}
\put(11.9,1){\line(1,0){1.7}}
\put(14.2,1){\line(1,0){1.5}}
\put(16.3,1){\line(1,0){1.5}}
\put(5.7,1.5){\tiny{$(1,1)$}}
\put(7.5,1.5){\tiny{$(-1,-1)$}}
\put(10.1,1.5){\tiny{$(1,1)$}}
\put(11.9,1.5){\tiny{$(-1,-1)$}}
\put(14.5,1.5){\tiny{$(1,1)$}}
%
%
\put(2.3,2.6){\makebox(0,0)[c]{\tiny $\gamma_{1}$}}
\put(2.3,0.9){\makebox(0,0)[c]{\tiny $\gamma_{j}$}}
\put(2.3,-0.7){\makebox(0,0)[c]{\tiny $\gamma_{p}$}}
\put(5.6,0.2){\makebox(0,0)[c]{\tiny $\alpha_{-1}$}}
\end{picture}
\end{center}\vskip 8mm
Applying the sequence of odd reflections with respect to
$\alpha_{\hf}$, $\alpha_{\frac{3}{2}}$, and
$\alpha_{\hf}+\alpha_{1}+\alpha_{\frac{3}{2}}$, we get the following
diagram by Proposition \ref{prop:odd:reflec}:
\begin{center}
\hskip 0cm \setlength{\unitlength}{0.23in}
\begin{picture}(22,3.3)
\put(2.85,2.6){\makebox(0,0)[c]{$\bullet$}}
\put(2.85,1){\makebox(0,0)[c]{$\bullet$}}
\put(2.85,-0.6){\makebox(0,0)[c]{$\bullet$}}
\put(5.25,1){\makebox(0,0)[c]{$\bigcirc$}}
\put(7.4,1){\makebox(0,0)[c]{$\bigcirc$}}
\put(11.6,1){\makebox(0,0)[c]{$\bigcirc$}}
\put(9.5,1){\makebox(0,0)[c]{\Large $\otimes$}}
\put(13.9,1){\makebox(0,0)[c]{$\bigcirc$}}
\put(16,1){\makebox(0,0)[c]{\Large $\otimes$}}
\put(18.6,1){\makebox(0,0)[c]{$\cdots$}}
\put(3.2,1){\line(1,0){1.7}}
\put(5.6,1){\line(1,0){1.5}}
\put(7.7,1){\line(1,0){1.5}}
\put(9.8,1){\line(1,0){1.5}}
\put(11.9,1){\line(1,0){1.7}}
\put(14.2,1){\line(1,0){1.5}}
\put(16.3,1){\line(1,0){1.5}}
\put(5.3,1.5){\tiny{$(-1,-1)$}}
\put(7.5,1.5){\tiny{$(-1,-1)$}}
\put(9.9,1.5){\tiny{$(1,-1)$}}
\put(11.9,1.5){\tiny{$(-1,-1)$}}
\put(14.3,1.5){\tiny{$(-1,1)$}}
\qbezier(5.2,1.3)(4.9,2.6)(3.15,2.6)
\qbezier(5.2,0.7)(4.9,-0.5)(3.15,-0.6)
\put(3.35,2.9){\tiny{$_{(b_1,c_1)}$}}
\put(3.7,2.45){\tiny{$_{\vdots}$}}
\put(3.35,1.4){\tiny{$_{(b_j,c_j)}$}}
\put(3.7,1.05){\tiny{$_{\vdots}$}}
\put(3.35,0.1){\tiny{$_{(b_p,c_p)}$}}
\put(2.3,2.6){\makebox(0,0)[c]{\tiny $\gamma_{1}$}}
\put(2.3,0.9){\makebox(0,0)[c]{\tiny $\gamma_{j}$}}
\put(2.3,-0.7){\makebox(0,0)[c]{\tiny $\gamma_{p}$}}
\put(5.6,0.2){\makebox(0,0)[c]{\tiny $\beta_{-1}$}}
\end{picture}
\end{center}\vskip 8mm
Note that in the process of {applying} odd reflections, we may have a matrix
with a diagonal entry $-2$. Then we replace it with $2$ by
multiplying a suitable diagonal matrix, which changes only the sign
of the corresponding row, but  does not change the associated
contragredient Lie superalgebra. Now, the subdiagram above, starting
from the first {isotropic} odd root, is of type $A$. Applying a suitable
sequence of odd reflections and using Proposition
\ref{prop:KM:embed}, it is easy to see that the contragredient Lie
superalgebra corresponding to any finite-rank diagram of the form
\eqref{G diagram} is a subalgebra of $\DG$. Finally we can check without difficulty that the outer derivation $d$ on $\DG^\circ$ when restricted to $\G^\circ$ coincides with $d$ on $\G^\circ$.  This completes the
proof.
\end{proof}

In light of Proposition~\ref{prop:GSG:sub}, we have
$$
\G_n=\DG_n\cap \G,\qquad \SG_m=\DG_m\cap \SG, \quad \text{ for }n\in
\N\text{ and }m\in\N\cup\{-1\}.
$$

\begin{cor}\label{root:mult}
Let $\DG$, $\SG$, and $\G$ be as above.
\begin{itemize}
\item[(1)]
If $\alpha$ is a root of $\G$, then $\alpha$ appears in both $\G$
and $\DG$ with the same multiplicity.

\item[(2)]
If $\alpha$ is a root of $\SG$, then $\alpha$ appears in both $\SG$
and $\DG$ with the same multiplicity.
\end{itemize}
\end{cor}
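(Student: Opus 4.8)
\emph{Proof proposal.} The plan is to deduce both statements from a single observation about weight supports, via the realization of $\G$ and $\SG$ inside $\DG$ given by \propref{prop:GSG:sub}; I would do case (1) in detail, case (2) being identical after replacing $\G$ and its simple roots by $\SG$ and its own. The observation is that in a contragredient Lie superalgebra $\mf{L}$ with a fundamental system $\Pi$ of linearly independent simple roots, a weight space $\mf{L}_\mu$ with $\mu$ a positive root is spanned by iterated brackets of the positive generators $e_i$ $(i\in\Pi)$ of total weight $\mu$; by linear independence of $\Pi$ each $e_i$ occurs exactly $c_i$ times, where $\mu=\sum_i c_i\alpha_i$, so $\mf{L}_\mu$ involves only the generators indexed by the support $\{i:c_i\neq 0\}$ of $\mu$. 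I would record two consequences. First, root spaces stabilize in the direct limits $\G=\bigcup_n\G_n$ and $\DG=\bigcup_r\DG_r$, so that $\G_\alpha=(\G_n)_\alpha$ and $\DG_\alpha=(\DG_r)_\alpha$ for all large $n,r$, reducing everything to finite rank. Second, if $\Pi_0\subseteq\Pi$ and $\mf{m}\subseteq\mf{L}$ is the contragredient subalgebra generated by the $e_i,f_i$ with $i\in\Pi_0$, then $\mf{L}_\mu=\mf{m}_\mu$ for every $\mu\in\Z\Pi_0$.

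Next I would invoke the construction in \propref{prop:GSG:sub}: at finite rank, $\G_n$ is the contragredient subalgebra of $\DG_r$ generated by the root vectors indexed by a subset $\Pi'_{\G}$ of a fundamental system $\wt{\Pi}'$ of $\DG_r$, where $\wt{\Pi}'$ is produced by a finite sequence of odd reflections applied to the fundamental system of $\DG_r$ (\propref{prop:odd:reflec} and \propref{prop:KM:embed}). The two points I would use are that an odd reflection replaces a fundamental system by another fundamental system of the \emph{same} algebra $\DG_r$ through an invertible integral change of basis of the root lattice, so that $\wt{\Pi}'$ is again linearly independent and the weight spaces $(\DG_r)_\alpha$ with $\alpha\neq 0$ are unchanged; and that the simple roots of $\G_n$, lying in $\Pi'_{\G}\subseteq\wt{\Pi}'\subseteq\wt{\h}^*$, are honest roots of $\DG_r$, so that a root $\alpha$ of $\G_n$ is literally an element of $\Z\Pi'_{\G}$.

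The conclusion is then immediate. Given a root $\alpha$ of $\G_n$, after replacing $\alpha$ by $-\alpha$ if needed I may take it positive for $\G_n$, hence positive for $\wt{\Pi}'$ since $\Pi'_{\G}\subseteq\wt{\Pi}'$; as $\alpha\in\Z\Pi'_{\G}$, the second consequence above, applied with $\mf{L}=\DG_r$, $\Pi_0=\Pi'_{\G}$, and $\mf{m}=\G_n$, gives $(\DG_r)_\alpha=(\G_n)_\alpha$. Combined with the finite-rank reduction this upgrades to $\DG_\alpha=\G_\alpha$, so in particular the multiplicities agree, proving (1); the identical argument for $\SG$ proves (2).

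The one point demanding care is the bookkeeping of the previous paragraph: one must verify that $\G$ (and likewise $\SG$) genuinely arises as the subalgebra attached to a \emph{sub-fundamental-system} of a fundamental system of $\DG$ obtained by odd reflections---so that \propref{prop:KM:embed} applies and the linear independence of $\wt{\Pi}'$ is actually available---rather than merely as an abstract isomorphic copy. This is exactly what the proof of \propref{prop:GSG:sub} supplies, and I expect it to be the main obstacle; the rescalings needed to restore a diagonal entry $2$ there change only the normalization of the Chevalley generators and not the simple roots as elements of $\wt{\h}^*$, so they do not affect the weight-support argument. Once the finite-rank identity is in hand, the passage to infinite rank is automatic because the relevant weight space has already stabilized.
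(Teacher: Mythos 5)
Your proposal is correct and takes essentially the same route as the paper's own proof: both reduce a positive root $\alpha$ of $\G$ (resp.\ $\SG$) to a finite-rank contragredient subalgebra whose diagram is a common subdiagram of $\G$ and $\DG$, using the odd-reflection realization from \propref{prop:GSG:sub} together with \propref{prop:KM:embed}. The weight-support argument you spell out (that a root space lies in the subalgebra generated by the simple root vectors in its support, by linear independence of the fundamental system) is exactly the step the paper leaves implicit.
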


\begin{proof}
Without loss of generality, we may assume that $\alpha$ is positive.
So $\alpha$ is a finite sum of  simple roots of $\G$. By Proposition
\ref{prop:KM:embed}, it is a root of a contragredient Lie
superalgebra of finite rank, whose diagram is a connected subdiagram
of both $\G$ and $\DG$ (with respect to a fundamental system
different from its standard one, but constructed in the proof of
Proposition \ref{prop:GSG:sub}). This proves (1). The proof of (2)
is similar.
\end{proof}

We end this section with the following easy but important
observation on the conditions for $\G$ to be a (symmetrizable)
Kac-Moody Lie algebra. Recall that $c_j$ ($1\leq j\leq p$) are the
entries  in $A^{\texttt{hd}}$ satisfying \eqref{eq:pseudoCartan for
hd}.

\begin{prop}\label{crit:KM}  \mbox{}
\begin{itemize}
\item[(1)]
If  $\Pi_B$ has no odd simple root and $c_j\in-\Z_+$ for $1\leq
j\leq p$, then $\G$ is a Kac-Moody Lie algebra.

\item[(2)]
If in addition  $A^{\texttt{hd}}$ is symmetrizable, then the diagram
\eqref{G diagram} is also symmetrizable, and hence $\G$ is a
symmetrizable Kac-Moody Lie algebra.
\end{itemize}
\end{prop}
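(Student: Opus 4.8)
The plan is to read off the entries of the matrix $A$ attached to the diagram \eqref{G diagram} and check directly that $A$ is a (symmetrizable) generalized Cartan matrix. Recall from the construction that the off-diagonal data of \eqref{G diagram} are: the entries of $B$ inside \framebox{$\mf{B}$}; the pairs $(b_j,c_j)$ on the edges joining $\beta_{-1}$ to $\gamma_j$, so that $b_j=a_{\gamma_j\,\beta_{-1}}$ and $c_j=a_{\beta_{-1}\,\gamma_j}$; and the pairs $(-1,-1)$ along the tail $\beta_{-1}-\beta_1-\beta_2-\cdots$. Every vertex of \eqref{G diagram} is of type $\bigcirc$, so all diagonal entries equal $2$.

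For part (1), I would first note that the hypothesis that $\Pi_B$ contains no odd simple root forces $I_{\ov{1}}=\emptyset$ for the whole diagram, since the remaining vertices $\beta_{-1},\beta_1,\beta_2,\ldots$ are all even. It then remains only to verify that $A$ is a GCM. The diagonal entries are all $2$; the off-diagonal entries inside \framebox{$\mf{B}$} are non-positive integers because $B$, having no odd index, is itself a GCM (a SGCM with $I_{\ov{1}}=\emptyset$ satisfies exactly the GCM axioms via (C0)--(C3)); the entries $b_j$ are non-positive by \eqref{eq:pseudoCartan for hd}; the entries $c_j$ are non-positive by the standing hypothesis $c_j\in-\Z_+$; and the tail entries equal $-1$. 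Finally, $a_{ij}=0\iff a_{ji}=0$: inside \framebox{$\mf{B}$} this is (C3), along the tail both entries equal $-1$, and on each $\beta_{-1}$--$\gamma_j$ edge both $b_j$ and $c_j$ are nonzero by \eqref{eq:pseudoCartan for hd}. Hence $A$ is a GCM, and since $I_{\ov{1}}=\emptyset$, the associated $\G$ is by definition a Kac-Moody Lie algebra.

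For part (2), the key observation I would use is that symmetrizability is a condition on the off-diagonal entries alone: a diagonal $D=\mathrm{diag}(d_i)$ symmetrizes $A$ precisely when $d_i a_{ij}=d_j a_{ji}$ for all $i\neq j$, a relation in which the diagonal entries $a_{ii}$ never appear. This is exactly what lets me transport a symmetrizer of $A^{\texttt{hd}}$ to \eqref{G diagram}, even though the distinguished vertex carries diagonal $0$ in $A^{\texttt{hd}}$ (where it is $\alpha_{-1}$) but diagonal $2$ in \eqref{G diagram} (where it is $\beta_{-1}$). Concretely, the off-diagonal entries of the head block $\mf{B}\cup\{\beta_{-1}\}$ of \eqref{G diagram} coincide with those of $A^{\texttt{hd}}$, namely the $B$-entries and the pairs $b_j,c_j$; so any symmetrizing coefficients $(d_i)$ for $A^{\texttt{hd}}$ satisfy the same relations $d_{\beta_{-1}}c_j=d_{\gamma_j}b_j$ for this block.

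It then remains to propagate these coefficients along the tail. Since every tail edge carries the symmetric label $(-1,-1)$, the relation $d_{\beta_i}(-1)=d_{\beta_{i+1}}(-1)$ forces $d_{\beta_i}=d_{\beta_{i+1}}$; setting every tail coefficient equal to $d_{\beta_{-1}}$ therefore extends the symmetrizer consistently over the whole (infinite) diagram, the argument being carried out on each finite-rank $\G_n$ and passing to the limit. Hence \eqref{G diagram} is symmetrizable, and together with part (1) this shows $\G$ is a symmetrizable Kac-Moody Lie algebra. I expect no serious obstacle; the one point genuinely requiring care is the diagonal mismatch between $A^{\texttt{hd}}$ and \eqref{G diagram}, which is harmless once one recognizes that symmetrizability is insensitive to the diagonal.
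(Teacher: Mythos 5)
Your proposal is correct and takes essentially the same route as the paper: part (1) is just read off from the diagram \eqref{G diagram}, and part (2) rests on the observation that symmetrizability only constrains off-diagonal entries, so the symmetrizer $D$ of $A^{\texttt{hd}}$ (unaffected by replacing the diagonal $0$ at $\alpha_{-1}$ by $2$) extends by repeating its entry $d$ at $\alpha_{-1}$ along the tail. This is exactly the paper's construction of the block-diagonal matrix $D'=\mathrm{diag}(D,d,d,\ldots,d)$ symmetrizing the matrix of \eqref{G diagram}.
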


\begin{proof}
Since (1) is clear from the diagram of $\G$, we prove (2) only. Let
$D$ be an $l\times l$ diagonal   matrix such that $DA^{\texttt{hd}}$
is symmetric. Note that if $\underline{A}^{\texttt{hd}}$ is the
matrix obtained from $A^{\texttt{hd}}$ by replacing the diagonal
entry $0$ (for $\alpha_{-1}$) by $2$ and changing its parity, then
$D\underline{A}^{\texttt{hd}}$ is still symmetric. Let $d$ be the
diagonal entry of $D$ corresponding to $\alpha_{-1}$. Consider the
diagonal $(l +n-1)\times( l+n-1)$ matrix\vskip 2mm
\begin{equation*}
D':=\begin{pmatrix}
D&0&0&\cdots&0\\
0&d&0&\cdots&0\\
0&0&d&\cdots&0\\
0&\vdots&\vdots&\ddots&0\\
0&0&0&\cdots&d
\end{pmatrix}.
\end{equation*}\vskip 2mm

It follows that $D' A'$ is diagonal where $A'$ is the generalized
Cartan matrix corresponding to the  Dynkin diagram \eqref{G
diagram}.
\end{proof}

\begin{example}
If  $A^{\texttt{hd}}$ is one of the matrices corresponding to the
exceptional simple Lie superalgebras in Example~
\ref{ex:except:simples}, with $\alpha\in\N$, then the corresponding
contragredient Lie superalgebra $\G$ is a symmetrizable Kac-Moody
Lie algebra.
\end{example}

\section{Irreducible character formulas}\label{Irreducible characters}

Let $I$ be the index set for the SGCM $B$, {i.e.,
for} the vertices of its Dynkin diagram \framebox{$\mf{B}$}. Let
$I\cup\{-1\}$ be the index set for $A^{\texttt{hd}}$ or the head
diagram \eqref{Head diagram}, where as usual we denote the index of
the simple root $\alpha_{-1}$ by $-1$. Recall that $A^{\texttt{hd}}$
is again a SGCM by our assumption
\eqref{eq:pseudoCartan for hd}. The index set of the simple roots of
$\DG$ is $\widetilde{I}:=I\cup T_{\infty}$, where
$\wt{I}_{\ov{0}}=I_{\ov{0}}$ and $\wt{I}_{\ov{1}}=I_{\ov{1}}\cup
T_\infty$. The set of simple roots and coroots of $\DG$ are
$\wt{\Pi}=\{\,\alpha_i\,|\,i\in \wt{I}\,\}$ and
$\wt{\Pi}^\vee=\{\,\alpha^\vee_i\,|\,i\in \wt{I}\,\}$, respectively.

{From} now on, unless otherwise specified, we shall assume in addition
that
\begin{align}\label{eq:locally nilpotent}\tag{{\bf B}}
\begin{split}
\text{$B=(\langle \alpha_j,\alpha_i^\vee\rangle)_{i,j\in I}$ is a
symmetrizable and anisotropic  SGCM,}
\end{split}
\end{align}
that is, $\G(B)$ is a symmetrizable anisotropic Kac-Moody
superalgebra, whose structure and representations were studied in
detail in \cite{K2} (also cf. \cite[Remark 4.11]{CFLW}).
In particular, ${\rm ad}(e_i)$ and ${\rm
ad}(f_i)$ act locally nilpotently on $\DG$ for $i\in {I}$ by Remark~
\ref{rem:Serre relations}. Note that the off-diagonal entries $c_j$
in $A^{\texttt{hd}}$ may still be positive integers.

For $i\in \widetilde{I}$, let $\omega_i$  be the fundamental weight
for $\DG$ such that
\begin{align*}
\langle\omega_i,\alpha^\vee_j\rangle=\delta_{ij},\quad \text{for $i,j\in \widetilde{I}$}.
\end{align*}
We further assume that $\langle \omega_i,d \rangle=-1$ for $i \in
T_\infty$, and $0$ for $i\in I$. Let $\omega$ be the fundamental
weight with respect to $d$, that is, $\langle\omega,d\rangle=1$ and
$\langle\omega,\alpha^\vee_i\rangle=0$ for all $i\in \wt{I}$. For
$i\in T_\infty$, define
\begin{equation}\label{epsilon and h}
\begin{split}
\epsilon_i&:=
\begin{cases}
\omega_{-1}, & \text{if $i=-1$},\\
-\omega_{\hf} +\omega_{-1}, & \text{if $i=\hf$},\\
(-1)^{2i}\left(\omega_i-\omega_{i-\hf}\right), & \text{if $i> \hf$},
\end{cases}\\
h_i&:=
\begin{cases}
-d, & \text{if $i=-1$},\\
d+\alpha_{-1}^\vee, & \text{if $i=\hf$},\\
-h_{i-\hf}-(-1)^{2i}\alpha^\vee_{i-\hf}, & \text{if $i>\hf$}.
\end{cases}
\end{split}
\end{equation}
Then $\langle \epsilon_i,h_j \rangle=\delta_{ij}$ and $\langle
\omega_k,h_j\rangle =0$ for $i,j\in T_\infty$ and $k\in I$.

\begin{rem}\label{rem:gl}
Let $\gl(\infty|\infty)$ denote the general linear  Lie superalgebra
spanned by the elementary matrices $E_{i,j}$  for $i,j\in T_\infty$
with parity $p(E_{i,j})=2(i+j) \pmod{2}$. Then the subalgebra of
$\DG$ generated by $e_i$, $f_i$, and $h_i$ for $i\in T_\infty$  is
isomorphic to $\gl(\infty|\infty)$, where $e_{-1}\mapsto
E_{-1,1/2}$, $e_r\mapsto E_{r, r+1/2}$, $f_{-1}\mapsto E_{1/2, -1}$,
$f_r\mapsto (-1)^{2r}E_{r+1/2, r}$ for $r\in \hf\N$, and $h_i\mapsto E_{i,i}$
for $i\in T_\infty$.
\end{rem}

Let $\wt{P}=\Z\omega+ \sum_{i\in \wt{I}}\Z\omega_i $ be the set of
integral weights for $\DG$. Note
that $\alpha_r=\epsilon_r-\epsilon_{r+1/2}\in \wt{P}$ for $r\in\hf\N$. We define
\begin{align*}
&P:=\Z\omega+\sum_{i\in I\cup\{-1\}}\Z\,\omega_i+\sum_{n\in\N}\Z\epsilon_n,\ \ \ \
\ov{P}:=\Z\omega+\sum_{i\in I\cup\{-1\}}\Z\,\omega_i+\sum_{r\in\hf+\Z_+}\Z\epsilon_r.
\end{align*}
We can regard $P\subseteq \wt{P}$ and $\ov{P}\subseteq\wt{P}$. Introduce
$$\varpi_{-1}=\omega_{-1},
\quad \varpi_{1}=\epsilon_1+\omega_{-1}, \quad
\varpi_{n}=\epsilon_n+\varpi_{n-1} \; (n\ge 2),
$$
$$
\varpi_{1/2}=\omega_{1/2}-\omega_{-1}=-\epsilon_{1/2},
\quad
\varpi_{r}=-\epsilon_{r} +\varpi_{r-1}\; (r\in\hf+\N).
$$
Then we
have
\begin{align*}
\langle \varpi_i,\beta^\vee_{j}\rangle=\delta_{ij}, \ \  \text{ for
} i,j\in \{-1\}\cup \tfrac{1}{2}\mathbb{N}.
\end{align*}
In particular, the sets $\{\varpi_{-1},\varpi_1,\varpi_2,\ldots\}$
and
$\{\varpi_{\hf},\varpi_{\frac{3}{2}},\varpi_{\frac{5}{2}},\ldots\}$
are the fundamental weights for the (even) tail diagrams of $\G$ and
$\SG$, starting from $\beta_{-1}$ and $\beta_{1/2}$, respectively.
Thus, we can identify $P$ and $\ov{P}$ with the sets of integral
weights for $\G$ and $\SG$, respectively.

Now, we define  functors $T$ and $\ov{T}$ as follows. For a
$\DG$-module $\wt{M}$ with weight space decomposition
$\wt{M}=\bigoplus_{\gamma\in \wt{P}}\wt{M}_{\gamma}$, we set

\begin{equation*}
T(\wt{M}):=\bigoplus_{\gamma\in P}\wt{M}_\gamma,\ \ \ \ \
\ov{T}(\wt{M}):=\bigoplus_{\gamma\in \ov{P}}\wt{M}_\gamma.
\end{equation*}
A homomorphism $f:\wt{M}\rightarrow\wt{N}$ of $\DG$-modules with
integral weights is in particular an $\wt{\h}$-module homomorphism.
Hence we have the restriction maps $T[f]:=f|_{T(\wt{M})}:  T(\wt{M})
\longrightarrow T(\wt{N})$ and $\ov{T}[f]:= f|_{\ov{T}(\wt{M})} :
\ov{T}(\wt{M})\longrightarrow \ov{T}(\wt{N})$, respectively.

\begin{prop}\label{Functor T and G-modules}
$T(\wt{M})$ and $\ov{T}(\wt{M})$ are $\G$- and $\SG$-modules, respectively.
\end{prop}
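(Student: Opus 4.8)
The plan is to regard $\G$ and $\SG$ as subalgebras of $\DG$ via \propref{prop:GSG:sub}, so that $\wt{M}$ is automatically a $\G$- and $\SG$-module, and then to show that the weight-truncations $T(\wt{M})$ and $\ov{T}(\wt{M})$ are stable subspaces. Since $\G$ is generated by its Cartan subalgebra $\h$ together with the root vectors $e_\beta,f_\beta$ attached to the simple roots $\beta\in\Pi$, it suffices to check that each generator preserves $T(\wt{M})$; likewise for $\SG$, $\ov{\Pi}$, and $\ov{T}(\wt{M})$. Stability under the Cartan is immediate: $T(\wt{M})=\bigoplus_{\gamma\in P}\wt{M}_\gamma$ is a sum of $\wt{\h}$-weight spaces and $\h\subseteq\wt{\h}$, so it is $\h$-stable, and similarly $\ov{T}(\wt{M})$ is $\ov{\h}$-stable.

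For the root vectors I would use that a root vector $x\in\DG$ attached to a simple root $\beta$ of $\G$ shifts $\wt{\h}$-weights by $\pm\beta$, i.e.\ $x\cdot\wt{M}_\gamma\subseteq\wt{M}_{\gamma\pm\beta}$, where $\beta$ is read as the element of the root lattice of $\DG$ recorded by the identifications $\beta_{-1}=\alpha_{-1}+\alpha_{\hf}$, $\beta_n=\alpha_n+\alpha_{n+\hf}$, and $\alpha_i$ ($i\in I$) from \propref{prop:GSG:sub}. Because $P$ is a subgroup of $\wt{P}$, stability of $T(\wt{M})$ under $e_\beta$ and $f_\beta$ is then equivalent to the single lattice-membership statement $\beta\in P$. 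Thus the whole proposition reduces to checking $\Pi\subseteq P$ and, symmetrically, $\ov{\Pi}\subseteq\ov{P}$.

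To verify $\Pi\subseteq P$ I would treat the three families of simple roots in turn. For $\alpha_i$ with $i\in I$, a coordinate computation in the basis $\{\omega\}\cup\{\omega_j\mid j\in\wt{I}\}$ shows that its only nonzero coordinates are those of $\omega$, of the $\omega_j$ with $j\in I$, and of $\omega_{-1}$ --- the couplings to tail positions beyond $\alpha_{-1}$ vanish since $\mf{B}$ meets the tail only through $\alpha_{-1}$ --- all of which are integral generators of $P$. For $\beta_n$ with $n\in\N$, the relation $\alpha_r=\epsilon_r-\epsilon_{r+\hf}$ telescopes into $\beta_n=\epsilon_n-\epsilon_{n+1}\in P$. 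The delicate case is $\beta_{-1}=\alpha_{-1}+\alpha_{\hf}$: here $\alpha_{-1}$ carries a stray $\omega_{\hf}$ while $\alpha_{\hf}=\epsilon_{\hf}-\epsilon_1=\omega_{-1}-\omega_1$ carries a $-\omega_1$, but these half-position fundamental weights recombine through $\epsilon_1=\omega_1-\omega_{\hf}$ into $\beta_{-1}=\sum_j b_j\,\omega_{\gamma_j}+\omega_{-1}-\epsilon_1$, which lies in $P$ since $b_j\in\Z$ and $\epsilon_1\in P$.

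The inclusion $\ov{\Pi}\subseteq\ov{P}$ follows the same template, now with the half-integer-indexed $\epsilon_r$ ($r\in\hf+\Z_+$) as the generators of $\ov{P}$: one has $\beta_r=\epsilon_r-\epsilon_{r+1}\in\ov{P}$ for $r\in\hf+\Z_+$, while the odd simple root $\alpha_{-1}$ is handled as above by rewriting its $\omega_{\hf}$-component as $\omega_{-1}-\epsilon_{\hf}\in\ov{P}$. The main obstacle throughout is precisely this bookkeeping: $P$ and $\ov{P}$ are cut out by $\epsilon$-generators sitting only at integer, respectively half-integer, tail positions, whereas the simple roots are naturally written through the fundamental weights $\omega_r$ at all tail positions, so the crux is to confirm that exactly the admissible $\epsilon$-combinations appear and that no half-position fundamental weight survives uncancelled. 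Once $\Pi\subseteq P$ and $\ov{\Pi}\subseteq\ov{P}$ are in hand, $\G$- and $\SG$-stability are immediate and the proposition follows.
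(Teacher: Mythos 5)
Your proposal is correct and takes essentially the same route as the paper: both arguments reduce the claim to showing that the simple roots of $\G$ (resp.\ $\SG$) lie in the lattice $P$ (resp.\ $\ov{P}$), via the identical computations $\beta_n=\epsilon_n-\epsilon_{n+1}$ and $\beta_{-1}=\sum_{i\in I\cup\{-1\}}\kappa_i\omega_i-\epsilon_1$ (and its half-integer analogue for $\alpha_{-1}$ in $\ov{P}$). The only difference is presentational: the paper treats the $T$ case and declares $\ov{T}$ "similar," while you spell out both cases and also check the roots $\alpha_i$, $i\in I$, explicitly.
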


\begin{proof}
We will show this for $T(\wt{M})$ only, as the case of
$\ov{T}(\wt{M})$ is similar. Writing the simple roots
$\beta_n=\alpha_{n}+\alpha_{n+1/2}$ ($n\in\N$) in terms of
fundamental weights, we see that
$\beta_n=(\omega_n-\omega_{n-1/2})+(\omega_{n+1/2}-\omega_{n+1})
=\epsilon_n-\epsilon_{n+1}$. Also we have
\begin{equation*}
\beta_{-1}=\alpha_{-1}+\alpha_{\hf}=\sum_{i\in
I\cup\{-1\}}\kappa_i\omega_i +\omega_{\hf}-\omega_1 =\sum_{i\in
I\cup\{-1\}}\kappa_i\omega_i -\epsilon_1 \quad(\kappa_i\in\Z).
\end{equation*}
Hence, $P$ is invariant under adding or subtracting the simple roots
of $\G$, which implies that $T(\wt{M})$ is a $\G$-module.
\end{proof}

Let $J\subseteq I$ be given. Let $\wt{\mf l}^{\, \texttt{hd}}$ be
the Levi subalgebra of $\DG$ generated by the Cartan subalgebra of
$\G(A^{\texttt{hd}})$ and $\{\,e_{i}, f_i\,|\, i\in J\,\}$, which is
a symmetrizable anisotropic Kac-Moody Lie superalgebra by
\eqref{eq:locally nilpotent}. Furthermore, let
$\gl(\infty|\infty)_{>0}$ be the subalgebra of $\gl(\infty|\infty)$
spanned by $E_{r,s}$ for $r,s\in T_\infty\setminus\{-1\}$  (see
Remark \ref{rem:gl}). Put
$\wt{\mf{l}}=\wt{\mf{l}}^{\,\texttt{hd}}\oplus
\gl(\infty|\infty)_{>0}$.

Let $\DG=\wt{\mf n}^+\oplus \wt{\mf h}\oplus \wt{\mf n}^-$ be the
triangular decomposition of $\DG$, and let $\wt{\mf p}=\wt{\mf
l}+\wt{\mf n}^+$ be the parabolic subalgebra corresponding to
$\wt{\mf l}$ with nilradical $\wt{\mf{u}}^+$ and opposite radical
$\wt{\mf{u}}^-$. Let $\ov{\mf l}=\SG\cap \wt{\mf l}$ and ${\mf
l}=\G\cap \wt{\mf l}$. The subalgebras $\ov{\mf{u}}^\pm$, $\ov{\mf
p}$ of $\SG$, and ${\mf{u}}^\pm$, ${\mf p}$ of $\G$ are defined in a
similar way.

Denote the set of partitions by $\mc P$. For
$\mu=(\mu_1,\mu_2,\ldots)\in \mc P$, let
$\mu'=(\mu'_1,\mu'_2,\ldots)$ denote the conjugate of $\mu$. We
denote a variant of Frobenius coordinates for $\mu$ by
\begin{equation*}
\mu^\theta=(\mu^\theta_{\hf},\mu^\theta_1,\mu^\theta_{\frac{3}{2}},\ldots)
:=(\mu'_1,\langle\mu_1-1\rangle,\langle\mu'_2-1\rangle,\langle\mu_2-2\rangle,\ldots),
\end{equation*}
where $\langle a\rangle=\max\{a,0\}$ for $a\in\Z$.

Let $P^+$ be the subset of $P$ consisting of weights of the following form:
\begin{align}\label{lambda:weight}
\la=\kappa\omega+\sum_{i\in I\cup\{-1\}} \kappa_i\omega_i
+\sum_{n\in\N}{}^+\la_n\epsilon_n,
\end{align}
where ${}^+\la=({}^+\la_1,{}^+\la_2,\ldots)\in\mc P$,
and $\kappa_i\in\Z_+$ (respectively $\kappa_i\in2\Z_+$) for $i\in
J_{\ov{0}}$ (respectively $i\in J_{\ov{1}}$).
For $\la\in P^+$ as in \eqref{lambda:weight}, we define
\begin{equation}\label{natural and theta}
\begin{split}
&\la^\natural:=\kappa\omega+\sum_{i\in I\cup\{-1\}}\kappa_i\omega_i
+\sum_{r\in \hf+\Z_+}{}^+\la'_{r+\hf}\epsilon_r\in \ov{P},
 \\
&\la^\theta:=\kappa\omega+\sum_{i\in I\cup\{-1\}}\kappa_i\omega_i
+\sum_{r\in \hf\N}{}^+\la^\theta_r\epsilon_{r} \in\wt{{P}}.
\end{split}
\end{equation}
We denote the images of $\natural: P^+\rightarrow \ov{P}$ and
$\theta: P^+\rightarrow \wt{P}$ by $\ov{P}^+$ and $\wt{P}^+$,
respectively, so that we get respective bijections
$\natural:P^+\rightarrow \ov{P}^+$ and $\theta:P^+\rightarrow
\wt{P}^+$.

For $\la\in P^+$, we assume the following notations.

\begin{itemize}
\item[]
$L(\mf{l},\la)$ : the irreducible $\mf{l}$-module with highest
weight $\la$,

\item[]
$\Delta(\la)$ : the parabolic Verma module ${\rm Ind}_{\mf
p}^{\G}L(\mf{l},\la)$ with highest weight $\la$,

\item[] $L(\la)$ : the unique irreducible quotient of $\Delta(\la)$,
\end{itemize}
where we extend $L(\mf{l},\la)$ in a trivial way to a
$\mf{p}$-module. Similarly as above, we introduce the
self-explanatory notations of $L(\ov{\mf{l}},\la^\natural)$,
$\ov{\Delta}(\la^\natural)$, $\ov{L}(\la^\natural)$, and
$L(\wt{\mf{l}},\la^\theta)$, $\wt{\Delta}(\la^\theta)$,
$\wt{L}(\la^\theta)$ for $\la\in P^+$.

Now, we define ${\mc O}$ to be the category of $\G$-modules $M$
such that $M$ is ${\h}$-semisimple with finite-dimensional weight
spaces and satisfies
\begin{itemize}
\item[(1)]
$M$ is a direct sum of $L(\mf{l},\gamma)$'s with $\gamma\in P^+$ as
an $\mf{l}$-module,

\item[(2)]
there exist  $\la^1,\la^2,\ldots,\la^k\in P^+$ such that ${\rm
wt}(M)\subset
\bigcup_{i}\left(\la^i-\sum_{\alpha\in\Pi}\Z_+\alpha\right)$,
\end{itemize}
where ${\rm wt}(M)$ denotes the set of weights of $M$. The morphisms in
$\mc O$ are all $\G$-module homomorphisms. The categories $\ov{\mc O}$ and $\wt{\mc
O}$ of $\SG$- and $\DG$-modules,
respectively, are defined analogously, with $\h$, $\mf{l}$, $P^+$,
and $\Pi$ replaced accordingly.

\begin{lem} \label{lem:DeltaL}
For  $\la\in P^+$, we have $X(\la)\in\mc O$,
$\ov{X}(\la^\natural)\in\ov{\mc O}$, and
$\wt{X}(\la^\theta)\in\wt{\mc O}$, where $X=\Delta$ or $L$.
\end{lem}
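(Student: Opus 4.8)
The plan is to deduce all three assertions from a single complete reducibility statement for the parabolic Verma modules over the respective Levi subalgebra. I would first observe that it suffices to treat $X=\Delta$: since $L(\la)$ is a quotient of $\Delta(\la)$ (and likewise $\ov{L}(\la^\natural)$, $\wt{L}(\la^\theta)$ of the corresponding parabolic Verma modules), and since each of the three conditions defining $\mc O$ --- namely $\h$-semisimplicity with finite-dimensional weight spaces, complete reducibility over $\mf{l}$ into $L(\mf{l},\gamma)$'s with $\gamma\in P^+$, and the domination of ${\rm wt}(M)$ by finitely many $\la^i\in P^+$ --- passes to quotients, the simple modules follow from the parabolic Verma case. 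I would carry out the argument for $\Delta(\la)\in\mc O$ and note that the cases of $\SG$ and $\DG$ are entirely parallel, with the tail Levi and its natural module replaced by the half-infinite $\gl$ piece (giving ${}^+\la'$ via $\natural$) and by $\gl(\infty|\infty)_{>0}$ (giving ${}^+\la^\theta$ via $\theta$) from \eqref{natural and theta}.

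Next, by the PBW theorem there is an isomorphism of $\mf{l}$-modules $\Delta(\la)\cong U(\mf{u}^-)\otimes L(\mf{l},\la)$. Condition (2) is then immediate: every weight of $\Delta(\la)$ lies in $\la-\sum_{\alpha\in\Pi}\Z_+\alpha$, so one takes $k=1$ and $\la^1=\la$. The heart of the matter is condition (1): the complete reducibility of $U(\mf{u}^-)\otimes L(\mf{l},\la)$ as an $\mf{l}$-module, together with the fact that every highest weight occurring lies in $P^+$.

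For condition (1) I would proceed in three steps. First, identify $\mf{u}^-$ as an $\mf{l}$-module: it is completely reducible, and its constituents are ``natural''-type highest weight modules with highest weight in $P^+$; this uses $\mf{l}=\wt{\mf{l}}^{\,\texttt{hd}}\oplus\gl(\infty|\infty)_{>0}$ with $\wt{\mf{l}}^{\,\texttt{hd}}$ a symmetrizable anisotropic Kac--Moody superalgebra together with the identification of the tail copy in Remark~\ref{rem:gl}. Second, pass from $\mf{u}^-$ to $U(\mf{u}^-)$ via the PBW filtration (whose associated graded is the relevant symmetric/exterior algebra) and tensor with the irreducible $L(\mf{l},\la)$: because ${\rm ad}(e_i)$ and ${\rm ad}(f_i)$ act locally nilpotently for $i\in I$ by \eqref{eq:locally nilpotent}, everything in sight is integrable over the $\mf{sl}(2)$/$\mf{osp}(1|2)$-copies of $\wt{\mf{l}}^{\,\texttt{hd}}$ and polynomial over the tail, hence lies in the semisimple category of such $\mf{l}$-modules (complete reducibility over the anisotropic factor being Kac's theorem \cite{K2}); this category is closed under the operations above. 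Third, verify that the set of highest weights $P^+$ is stable under the resulting constituents --- a Littlewood--Richardson/Pieri-type statement for the partition part ${}^+\la$ of \eqref{lambda:weight} together with dominance for the head part. Finally, finite-dimensionality of weight spaces falls out of this decomposition: for a fixed weight $\mu$ the total $\epsilon$-degree $\sum_{n}\langle\mu,h_n\rangle$ defined through \eqref{epsilon and h} is a fixed non-negative integer bounded by $|{}^+\la|$, so only finitely many constituents $L(\mf{l},\gamma)$ with $\mu\leq\gamma\leq\la$ can contain $\mu$, each with finite multiplicity and finite-dimensional $\mu$-weight space.

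I expect the three steps establishing condition (1) to be the main obstacle. Unlike the finite-dimensional classical situation, one cannot simply invoke explicit Cauchy or Howe-type decompositions of $S(\mf{u}^-)$; instead the complete reducibility of $U(\mf{u}^-)\otimes L(\mf{l},\la)$ over the infinite-rank Kac--Moody super Levi $\mf{l}$, and the control that all constituents remain in $P^+$, must be argued structurally through integrability, the hypothesis \eqref{eq:locally nilpotent}, and the semisimplicity of the ambient category of polynomial/integrable $\mf{l}$-modules.
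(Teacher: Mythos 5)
Your overall strategy (reduce to $X=\Delta$, identify $\Delta(\la)\cong U(\mf{u}^-)\otimes L(\mf{l},\la)$ as $\mf{l}$-modules, and deduce membership in the category from complete reducibility over the Levi) is the same as the paper's, and your reduction from $L$ to $\Delta$ and your verification of condition (2) are fine. But there is a genuine gap exactly at the point you yourself flag as "the main obstacle": you assert that everything is "polynomial over the tail," and you try to source this from \eqref{eq:locally nilpotent}, i.e.\ from local nilpotency of ${\rm ad}(e_i)$, ${\rm ad}(f_i)$ for $i\in I$. That hypothesis only concerns the head indices and gives integrability over $\wt{\mf l}^{\,\texttt{hd}}$; it says nothing about how $\wt{\mf u}^-$ decomposes under $\gl(\infty|\infty)_{>0}$. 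Moreover, for the $\DG$ case polynomiality cannot be replaced by any integrability-type condition: $\gl(\infty|\infty)_{>0}$ has isotropic odd simple roots, and over such algebras (already over $\gl(1|1)$) weight modules with locally nilpotent action of the Chevalley generators need not be semisimple. So without an actual proof that every weight of $\wt{\mf u}^-$ has non-negative coefficients along the $\epsilon_r$, $r\in\hf\N$, you cannot invoke the semisimplicity of the polynomial category, and complete reducibility of $\wt{\Delta}(\la^\theta)$ over $\wt{\mf l}$ does not follow.

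This missing step is precisely the nontrivial content of the paper's proof. There, one takes a root $\alpha$ of $\wt{\mf u}^-$, writes it as $\alpha=\sum_{i\in I\cup\{-1\}}p_i\omega_i+\sum_{r\in\hf\N}q_r\epsilon_r$, shows the top nonzero coefficient $q_s$ is positive, and then, assuming some $q_t<0$, applies the reflections attached to the even roots $\beta_t=\alpha_t+\alpha_{t+\hf}$ (whose root vectors act locally nilpotently on $\wt{\mf u}^-$) to permute the $\epsilon$-coefficients and reach a contradiction; this root-combinatorial argument is what establishes that $\wt{\mf u}^-$, hence $U(\wt{\mf u}^-)$, is a polynomial $\gl(\infty|\infty)_{>0}$-module. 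Relatedly, your claim that the three cases are "entirely parallel" is not accurate: for $\G$ and $\SG$ the tail parts of $\mf l$ and $\ov{\mf l}$ are generated by the even roots $\beta_n$ (resp.\ $\beta_r$) and contain no isotropic odd simple roots, so integrability plus Kac's complete reducibility theorem \cite[Proposition 2.8]{K2} suffices there; only the $\DG$ case requires the polynomiality argument, and that is where your proposal stops short of a proof.
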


\begin{proof}
First, we note that both $L(\mf{l},\la)$ and $\mf{u}^-$ are
integrable $\mf{l}$-modules. The enveloping algebra $U(\mf{u}^-)$ is
also integrable via the adjoint action of $\mf l$. Hence,
$U(\mf{u}^-)\otimes L(\mf{l},\la)$, which is isomorphic to
$\Delta(\la)$ as an $\mf{l}$-module, is completely reducible over
$\mf{l}$ \cite[Proposition 2.8]{K2} (see also \cite[Section 3.2.2]{CK}).  This proves $\Delta(\la)\in
\mc O$ and hence $L(\la)\in \mc O$.

Next, we see that $L(\ov{\mf{l}},\la^\natural)$ is an integrable
$\ov{\mf l}$-module. Also, by \eqref{eq:pseudoCartan for hd}, \eqref{eq:locally nilpotent},
 and Remark \ref{rem:Serre
relations}, $\ov{\mf{u}}^-$ is an integrable $\ov{\mf{l}}$-module,
and so is its enveloping algebra.  This shows
$\ov{X}(\la^\natural)\in\ov{\mc O}$.

It remains to show $\wt{X}(\la^\theta)\in\wt{\mc O}$. Since $\wt{\mf
l}=\wt{\mf l}^{\, \texttt{hd}}\oplus\gl(\infty|\infty)_{>0}$,
$L(\wt{\mf{l}},\la^\theta)$ is an integrable $\wt{\mf l}^{\,
\texttt{hd}}$-module and a polynomial
$\gl(\infty|\infty)_{>0}$-module. It is clear from \eqref{eq:pseudoCartan for hd}, \eqref{eq:locally
nilpotent},  and Remark
\ref{rem:Serre relations} that $\wt{\mf{u}}^-$ is an integrable
$\wt{\mf l}^{\, \texttt{hd}}$-module.

Now, we claim that $\wt{\mf{u}}^-$ is a polynomial
$\gl(\infty|\infty)_{>0}$-module. Let $\alpha$ be a negative root in
$\wt{\mf{u}}^-$, where $\alpha=-\sum_{i\in \wt{I}}a_i\alpha_i$ for
some $a_i\in \Z_+$ with $a_{-1}\neq 0$. Since
$\alpha_{-1}=\omega_{1/2}+\sum_{i\in I\cup\{-1\}}\langle
\alpha_{-1},\alpha^\vee_i \rangle\omega_i$ and
$\omega_{1/2}=-\epsilon_{1/2}+\omega_{-1}$,
\begin{equation}\label{root of u}
\alpha=\sum_{i\in I\cup\{-1\}}p_i \omega_i +\sum_{r\in \hf\N}q_r \epsilon_r,
\end{equation}
for some $p_i$, $q_r\in \Z$. Since $a_{-1}\neq 0$, there exists an
$s\in\hf\N$ such that $q_s\neq 0$. Choose the largest such $s$. Then
$q_s\in \N$ since $\alpha_{s-1/2}=\epsilon_{s-1/2}-\epsilon_s$ and
$a_{s-1/2}\in \N$. Suppose that $q_t$ is negative for some $1/2\leq
t<s$, and choose the largest such $t$. Let $x_\pm$ be a root vector
of $\DG$ associated with an even root
$\pm\beta_{t}=\pm(\alpha_t+\alpha_{t+1/2})$. Then ${\rm ad}(x_\pm)$
is locally nilpotent on $\wt{\mf u}^-$. We apply the simple
reflection associated with $\beta_t$ to have another negative root
$\alpha'$ of $\wt{\mf u}^-$, which is obtained from $\alpha$ by
exchanging  the coefficients of $\epsilon_t$ and $\epsilon_{t+1}$.
Applying the above process repeatedly, we have a negative root of
the form \eqref{root of u}, where there exists $s\in\hf\N$ such that
$q_s\in -\N$ and $q_{s'}=0$ for $s'>s$, which  is a contradiction.
This proves our claim, which implies that the enveloping algebra
$U(\wt{\mf u}^-)$ is also a polynomial
$\gl(\infty|\infty)_{>0}$-module.

Therefore, $\wt{\Delta}(\la^\theta)$ is completely reducible over
$\wt{\mf l}$, since $U(\wt{\mf u}^-)\otimes
L(\wt{\mf{l}},\la^\theta)$ is an integrable $\wt{\mf l}^{\,
\texttt{hd}}$-module and a polynomial
$\gl(\infty|\infty)_{>0}$-module (see e.g.~ \cite{CK}, \cite[Theorem
6.4]{CW}). We conclude that $\wt{\Delta}(\la^\theta)\in\wt{\mc O}$
and hence $\wt{L}(\la^\theta)\in\wt{\mc O}$.
\end{proof}

Let $n\in\mathbb{N}$ be given. Consider the following sequence of
$\frac{n(n+1)}{2}$ odd roots in $\DG$.
\begin{equation*}
\begin{split}
\alpha_{\hf},\ \ &
 \underbrace{\alpha_{\frac{3}{2}},\  \alpha_{\frac{3}{2}}+\alpha_1+\alpha_\hf}_{2},\
 \underbrace{\alpha_{\frac{5}{2}},\
  \alpha_{\frac{5}{2}}+\alpha_2+\alpha_{\frac{3}{2}},
 \ \alpha_{\frac{5}{2}}+\alpha_2
 +\alpha_{\frac{3}{2}}+\alpha_1+\alpha_{\hf}}_{3},\ \ldots \\
&\ \ \ \ \ \ldots \ ,
\underbrace{\alpha_{\frac{2n-1}{2}},\ \alpha_{\frac{2n-1}{2}}
+\alpha_{\frac{2n-2}{2}}+\alpha_{\frac{2n-3}{2}},\ \ldots,
\ \alpha_{\frac{2n-1}{2}}+\alpha_{\frac{2n-2}{2}}+\cdots+\alpha_{\hf}}_{n}. \\
\end{split}
\end{equation*}
We apply successively the corresponding odd reflections to $\DG$ in
the above order starting from $\alpha_\hf$. The resulting new Borel
subalgebra of $\DG$ is denoted by $\wt{\mf{b}}^c(n)$,  whose set of
simple roots is as follows:
${\Pi}_B\cup\{\beta_{-1},\beta_1,\cdots,\beta_{n-1}\}
\cup\{-\sum_{k=1}^{2n-1}\alpha_{\frac{k}{2}}\}
\cup\{\beta_{\hf},\beta_{\frac{3}{2}},\cdots, \beta_{n-\hf}\}
\cup\{\alpha_{n+\hf},\alpha_{n+1},\alpha_{n+\frac{3}{2}},\cdots\}$.

Next, consider another sequence of $\frac{n(n+1)}{2}$ odd roots in
$\DG$ as follows.
\begin{equation*}
\begin{split}
\alpha_{1},\ \ &
 \underbrace{\alpha_{2},\  \alpha_2+\alpha_{\frac{3}{2}}+\alpha_1}_{2},\
 \underbrace{\alpha_{3},\  \alpha_3+\alpha_{\frac{5}{2}}+\alpha_2 ,\
 \alpha_3+\alpha_{\frac{5}{2}}+\alpha_2+\alpha_{\frac{3}{2}}+\alpha_1}_{3},\ \ldots
  \\
&\ \ \  \ \ \ \  \ \ \ \ \ \ \ \ldots \ ,
\underbrace{\alpha_{n},\ \alpha_{n}+\alpha_{\frac{2n-1}{2}}
+\alpha_{\frac{2n-2}{2}},\ \ldots,\ \alpha_{n}+\alpha_{\frac{2n-1}{2}}
+\cdots+\alpha_{1}}_{n}.
 \\
\end{split}
\end{equation*}
We denote by $\wt{\mf{b}}^s(n)$ the new Borel subalgebra of $\DG$
obtained by applying the odd reflections corresponding to the above
sequence starting from $\alpha_1$. The set of simple roots of $\wt{\mf{b}}^s(n)$ is
$\Pi_B\cup\{\alpha_{-1},\beta_{\hf},\beta_{\frac{3}{2}},\cdots,\beta_{n-\hf}
\}\cup\{-\sum_{k=2}^{2n}\alpha_{\frac{k}{2}}\}
\cup\{\beta_{1},\beta_{2},\cdots,\beta_{n}\}
\cup\{\alpha_{n+1},\alpha_{n+\frac{3}{2}},\alpha_{n+2},\cdots\}$.

The proof of  \cite[Lemma 3.1]{CL} can be easily adapted (as done in
\cite{CLW}) to prove the following.

\begin{lem}\label{lem:change}
Let $\la\in{P}^+$ and $n\in\N$ be given.
\begin{itemize}
\item[(1)]
If $\ell({}^+\la)\le n$, then the highest weight of
$L(\wt{\mf{l}},\la^\theta)$ with respect to $\wt{\mf{b}}^c(n)$ is
$\la$.

\item[(2)]
If $\ell(({}^+\la)')\le n$, then the highest weight of
$L(\wt{\mf{l}},\la^\theta)$ with respect to  $\wt{\mf{b}}^s(n)$ is
$\la^\natural$.
\end{itemize}
Here $\ell(\mu)$ denotes the length of a partition $\mu$.
\end{lem}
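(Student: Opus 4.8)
The plan is to track the highest weight of the irreducible module $L(\wt{\mf l},\la^\theta)$ as it is recomputed relative to each Borel in the prescribed chain of odd reflections, the single-reflection rule being the basic step. Recall that if $V$ is an irreducible highest weight $\DG$-module with highest weight $\mu$ relative to a Borel $\mf b$ and $\alpha$ is an odd isotropic simple root of $\mf b$, then relative to the Borel $\mf b'$ obtained from the odd reflection at $\alpha$ the highest weight of $V$ equals $\mu$ if $\langle\mu,\alpha^\vee\rangle=0$ and $\mu-\alpha$ if $\langle\mu,\alpha^\vee\rangle\neq 0$; this follows by examining the action of the root vector $f_\alpha$ on a highest weight vector (using $f_\alpha^2=0$) together with \propref{prop:odd:reflec}, which records the transformation of the simple system itself. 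By \lemref{lem:DeltaL} the module $L(\wt{\mf l},\la^\theta)$ lies in $\wt{\mc O}$, and at every stage of either sequence the root being reflected is an odd isotropic simple root of the current Borel, so the rule applies iteratively.

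The crucial simplification is that the entire calculation lives in the $\epsilon$-coordinates. Each tail root satisfies $\alpha_r=\epsilon_r-\epsilon_{r+\hf}$ for $r\in\hf\N$, and every root occurring in the two sequences is a telescoping sum of consecutive such roots, hence of the form $\epsilon_s-\epsilon_t$ with no component along $\omega$ or along $\omega_i$ for $i\in I\cup\{-1\}$. Therefore the coordinates $\kappa$ and $\kappa_i$ of $\la^\theta$ are untouched by every reflection and only the $\epsilon$-part moves. Since $L(\wt{\mf l},\la^\theta)$ restricts to a polynomial module over the copy of $\gl(\infty|\infty)$ of \remref{rem:gl} (as established in the proof of \lemref{lem:DeltaL}), the problem reduces to the combinatorics of $\gl(\infty|\infty)$ highest weights, precisely the situation treated in \cite[Lemma 3.1]{CL} and adapted in \cite{CLW}.

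For part (1) I would perform the reflections round by round, the $k$-th round consisting of the $k$ reflections at the odd roots $\epsilon_{(2j-1)/2}-\epsilon_k$ for $j=k,k-1,\dots,1$, which is how the listed roots read in $\epsilon$-coordinates. Each such reflection pairs a half-integer slot against the integer slot $\epsilon_k$, and its effect is dictated at every step by whether the relevant coroot pairing vanishes; the net outcome of the full chain is to migrate the weight out of the half-integer slots, converting the Frobenius-type coordinates $\mu^\theta$ of $\la^\theta$ (with $\mu={}^+\la$) into the row coordinates $({}^+\la_1,{}^+\la_2,\dots)$ supported on the integer points $\epsilon_n$. The hypothesis $\ell({}^+\la)\le n$ ensures that all nonzero rows lie within the range $\epsilon_1,\dots,\epsilon_n$ exhausted after $n$ rounds, so that the highest weight relative to $\wt{\mf b}^c(n)$ is exactly $\la$. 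Part (2) is entirely parallel: the $\wt{\mf b}^s(n)$ chain instead pairs the integer slots against the half-integer slot $\epsilon_{k+\hf}$ in the $k$-th round, migrating the weight into the half-integer slots and producing the conjugate partition, whence the highest weight is $\la^\natural$; here the length bound is $\ell(({}^+\la)')\le n$.

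The main obstacle is the bookkeeping just indicated: checking at each of the $\tfrac{n(n+1)}{2}$ reflections whether the coroot pairing against the current weight vanishes, and confirming that the accumulated change reproduces the passage from Frobenius coordinates to row (respectively column) coordinates. This is exactly the content of the computation in \cite[Lemma 3.1]{CL}, so the only genuinely new point to verify is that the head block $\mf{B}$ attached at $\alpha_{-1}$ does not interfere; this is immediate, since every reflected root lies in the $\epsilon$-span and hence meets $\Pi_B$ only through the single bond recorded by $\beta_{-1}$.
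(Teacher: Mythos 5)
Your proposal is correct and is essentially the paper's own argument: the paper disposes of this lemma in a single sentence by adapting the odd-reflection computation of \cite[Lemma 3.1]{CL} (as done in \cite{CLW}), and your round-by-round application of the single odd-reflection rule in $\epsilon$-coordinates, together with the check that the head block \framebox{$\mf{B}$} and the $\omega$-, $\omega_i$-components cannot interfere with the relevant coroot pairings, is precisely that adaptation spelled out. One small slip that does not affect the argument: $L(\wt{\mf{l}},\la^\theta)$ is an $\wt{\mf{l}}$-module rather than an object of $\wt{\mc O}$, so \lemref{lem:DeltaL} is not the right citation for it; all your argument needs is that it is an irreducible highest weight $\wt{\mf{l}}$-module and that every reflected root lies in $\gl(\infty|\infty)_{>0}\subseteq \wt{\mf{l}}$, both of which hold by construction.
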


\begin{lem}\label{lem:chL}
For $\la\in{{P}}^+$, we have $T({L}(\wt{\mf
l},\la^\theta))=L(\mf{l},\la)$ and  $\ov{T}({L}(\wt{\mf
l},\la^\theta))={L}(\ov{\mf l},\la^\natural)$.
\end{lem}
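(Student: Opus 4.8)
The plan is to establish $T(L(\wt{\mf l},\la^\theta))=L(\mf l,\la)$ in detail and to deduce $\ov{T}(L(\wt{\mf l},\la^\theta))=L(\ov{\mf l},\la^\natural)$ by the symmetric argument, interchanging the even and odd tail directions (equivalently, replacing $\wt{\mf b}^c(n)$ by $\wt{\mf b}^s(n)$ in \lemref{lem:change}). The point of departure is the decomposition $\wt{\mf l}=\wt{\mf l}^{\,\texttt{hd}}\oplus\gl(\infty|\infty)_{>0}$ into commuting ideals whose Cartan subalgebras together span that of $\wt{\mf l}$. Consequently the irreducible module factors as an outer tensor product
\[
L(\wt{\mf l},\la^\theta)\cong V^{\texttt{hd}}\boxtimes V^{\mathrm{tail}},
\]
where $V^{\texttt{hd}}$ is the irreducible $\wt{\mf l}^{\,\texttt{hd}}$-module of highest weight $\kappa\omega+\sum_{i\in I\cup\{-1\}}\kappa_i\omega_i$ and $V^{\mathrm{tail}}$ is the irreducible polynomial $\gl(\infty|\infty)_{>0}$-module of highest weight $\sum_{r\in\hf\N}{}^+\la^\theta_r\epsilon_r$, which encodes the partition ${}^+\la$. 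Since $\wt{\mf l}^{\,\texttt{hd}}$ is generated only by $e_i,f_i$ with $i\in J\subseteq I$, every weight of $V^{\texttt{hd}}$ lies in $\Z\omega+\sum_{i\in I\cup\{-1\}}\Z\omega_i$, so whether a weight of $L(\wt{\mf l},\la^\theta)$ lies in $P$ or in $\ov P$ is determined by its $V^{\mathrm{tail}}$-component alone. Hence $T$ and $\ov T$ act only on the tail factor: $T(L(\wt{\mf l},\la^\theta))=V^{\texttt{hd}}\boxtimes T(V^{\mathrm{tail}})$, and similarly for $\ov T$.

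Next I would reduce to the super duality for the general linear Lie superalgebra on the tail. Writing the formal character of $V^{\mathrm{tail}}$ in the variables $x_n=e^{\epsilon_n}$ $(n\in\N)$ and $y_r=e^{\epsilon_r}$ $(r\in\hf+\Z_+)$, it equals the hook Schur function $hs_{{}^+\la}(x;y)$ (cf.\ \cite[Chapter~6]{CW}). By definition $T$ keeps exactly the weight spaces whose $\epsilon$-support is integral, so on characters it performs the specialization $y_r\mapsto 0$, giving $hs_{{}^+\la}(x;0)=s_{{}^+\la}(x)$; dually $\ov T$ specializes $x_n\mapsto 0$ and yields $hs_{{}^+\la}(0;y)=s_{({}^+\la)'}(y)$. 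These are the characters of the irreducible polynomial modules of the even tail subalgebra $\mf l\cap\gl(\infty|\infty)_{>0}$ (the copy of $\gl(\infty)$ on the integral directions), with highest weight ${}^+\la$, and of the odd tail subalgebra $\ov{\mf l}\cap\gl(\infty|\infty)_{>0}$, with highest weight $({}^+\la)'$, respectively.

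To promote this character identity to a module isomorphism I would invoke complete reducibility: arguing as in the proof of \lemref{lem:DeltaL}, $T(V^{\mathrm{tail}})$ is a polynomial, hence completely reducible, module over $\mf l\cap\gl(\infty|\infty)_{>0}$, and a completely reducible module whose character coincides with that of a single irreducible must be that irreducible. Recombining with the untouched head factor and reading off the highest weights through \eqref{natural and theta} then gives $T(L(\wt{\mf l},\la^\theta))=L(\mf l,\la)$ and $\ov T(L(\wt{\mf l},\la^\theta))=L(\ov{\mf l},\la^\natural)$; as an independent confirmation of the highest weights, \lemref{lem:change}(1) exhibits $\la\in P$ as the $\wt{\mf b}^c(n)$-highest weight surviving $T$ once $n\ge\ell({}^+\la)$, and \lemref{lem:change}(2) does the same for $\la^\natural$ under $\ov T$. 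The main obstacle is precisely this tail step: one must check that the abstractly defined truncations $T$ and $\ov T$ genuinely implement the specializations $y\mapsto 0$ and $x\mapsto 0$ of $hs_{{}^+\la}$, and that the conjugate partition $({}^+\la)'$ is what emerges on the $\ov T$-side. This combinatorial core is exactly the content of $\gl(\infty|\infty)$ super duality, which I would cite rather than re-derive.
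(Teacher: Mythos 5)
Your proposal is correct and follows essentially the same route as the paper's proof: decompose $\wt{\mf l}=\wt{\mf l}^{\,\texttt{hd}}\oplus\gl(\infty|\infty)_{>0}$, write $\text{ch}\, L(\wt{\mf l},\la^\theta)$ as the head character times the hook Schur function $HS_{{}^+\la}$, and observe that by their very definitions $T$ and $\ov{T}$ specialize this to $s_{{}^+\la}$ and $s_{({}^+\la)'}$, which are the characters of $L(\mf l,\la)$ and $L(\ov{\mf l},\la^\natural)$. Your additional step upgrading the character identity to a module identification (via complete reducibility of polynomial modules, plus the highest-weight confirmation from Lemma~\ref{lem:change}) is simply a fleshed-out version of the paper's terse closing sentence that ``the lemma follows from comparing the above characters and the definitions of $T$ and $\ov{T}$,'' so there is no substantive difference in approach.
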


\begin{proof}
Let $\{\,x_1,x_2,\ldots\,\}$ and $\{\,y_1,y_2,\ldots\,\}$ be
mutually commuting formal variables. For $\mu\in \mc{P}$, let
$s_\mu(x_1,x_2,\ldots)$ and $HS_\mu(x_1,x_2,\ldots ;y_1,y_2,\ldots)$
denote the Schur function and the hook (or super) Schur function
corresponding to $\mu$, respectively (cf. \cite[Appendix~A]{CW}).
For an indeterminate $e$,  set $x_{i}=e^{\epsilon_i}$ for
$i\in\hf\N$. Let $\la\in P^+$ be as in \eqref{lambda:weight}.  Since
$\wt{\mf l}=\wt{\mf l}^{\,
\texttt{hd}}\oplus\gl(\infty|\infty)_{>0}$, we see that (ignoring
the eigenvalue of $d$)
\begin{align*}
\text{ch\,} L(\wt{\mf{l}},\la^\theta)=\text{ch\,}L(\wt{\mf l}^{\,
\texttt{hd}},\nu)HS_{{}^+\la}(x_1,x_2,\cdots;x_{1/2},x_{3/2},\cdots),
\end{align*}
where $\nu=\sum_{i\in I }\kappa_i\omega_i$ (cf. \cite{CK}) and
$L(\wt{\mf l}^{\, \texttt{hd}},\nu)$ is the irreducible highest weight $\wt{\mf
l}^{\, \texttt{hd}}$-module with highest weight $\nu$. In
particular, it follows that
\begin{align*}
\text{ch\,} T(L(\wt{\mf{l}},\la^\theta))&=\text{ch\,}
L(\wt{\mf l}^{\, \texttt{hd}},\nu)s_{{}^+\la}(x_1,x_2,\cdots),\\
\text{ch\,} \ov{T}(L(\wt{\mf{l}},\la^\theta))&=\text{ch\,}
L(\wt{\mf l}^{\, \texttt{hd}},\nu)s_{({}^+\la)'}(x_{1/2},x_{3/2},\cdots).
\end{align*}
Then the lemma follows from comparing the above characters and the
definitions of $T$ and $\ov{T}$.
\end{proof}

\begin{prop}\label{prop:T is exact}
$T$ and $\ov{T}$ are exact functors from $\wt{\mc{O}}$ to $\mc O$
and $\ov{\mc O}$, respectively.
\end{prop}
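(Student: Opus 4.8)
The plan is to regard $T$ and $\ov{T}$ as weight-truncation functors and to exploit the elementary fact that, on the category of $\wt{\h}$-semisimple modules, extracting a single weight space is an exact operation. I will argue for $T$; the case of $\ov{T}$ is verbatim the same after replacing $\mf{l}$, $P$, $\Pi$ by $\ov{\mf{l}}$, $\ov{P}$, and the simple roots of $\SG$, and after invoking the second halves of \lemref{lem:chL} and \lemref{lem:DeltaL}. First I would confirm that $T$ actually lands in $\mc{O}$. For $\wt{M}\in\wt{\mc{O}}$, the space $T(\wt{M})=\bigoplus_{\gamma\in P}\wt{M}_\gamma$ is a sub-sum of the weight spaces of $\wt{M}$, hence $\h$-semisimple with finite-dimensional weight spaces, and it is a $\G$-module by \propref{Functor T and G-modules}. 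For condition (1), decompose $\wt{M}$ as an $\wt{\mf{l}}$-module into copies of $L(\wt{\mf{l}},\gamma^\theta)$ with $\gamma\in P^+$. Since $\mf{l}=\G\cap\wt{\mf{l}}$ and $P$ is stable under adding and subtracting the roots of $\G$ (hence those of $\mf{l}$), the functor $T$ sends $\wt{\mf{l}}$-modules to $\mf{l}$-modules and commutes with direct sums; by \lemref{lem:chL} one has $T\bigl(L(\wt{\mf{l}},\gamma^\theta)\bigr)=L(\mf{l},\gamma)$, so $T(\wt{M})$ is a direct sum of the $L(\mf{l},\gamma)$ with $\gamma\in P^+$.

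For condition (2), suppose ${\rm wt}(\wt{M})\subseteq\bigcup_i\bigl((\la^i)^\theta-\sum_{\alpha\in\wt{\Pi}}\Z_+\alpha\bigr)$ for finitely many $\la^i\in P^+$. A weight $\mu\in P$ occurring in $T(\wt{M})$ then satisfies $(\la^i)^\theta-\mu\in\sum_{\alpha\in\wt{\Pi}}\Z_+\alpha$ for some $i$; using the relations $\beta_n=\epsilon_n-\epsilon_{n+1}$ and the expression for $\beta_{-1}$ recorded in the proof of \propref{Functor T and G-modules}, one checks that the half-integer $\epsilon$-contributions drop out upon restricting to $P$ and that $\mu$ is bounded above by the finite set $\{\la^i\}\subseteq P^+$ in the $\Pi$-dominance order. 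Hence $T(\wt{M})\in\mc{O}$, so $T\colon\wt{\mc{O}}\to\mc{O}$ is well defined; it is functorial because $T[f]=f|_{T(\wt{M})}$ and weight-preserving maps carry $T(\wt{M})$ into $T(\wt{N})$.

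It then remains to establish exactness, and this is the formal heart of the argument. Given a short exact sequence $0\to\wt{M}'\to\wt{M}\to\wt{M}''\to 0$ in $\wt{\mc{O}}$, each morphism is an $\wt{\h}$-module map and so preserves weight spaces; restricting to the $\gamma$-weight space yields, for every $\gamma\in\wt{P}$, a short exact sequence $0\to\wt{M}'_\gamma\to\wt{M}_\gamma\to\wt{M}''_\gamma\to 0$ of finite-dimensional vector spaces. Taking the direct sum over $\gamma\in P$ preserves exactness and reproduces precisely $0\to T(\wt{M}')\to T(\wt{M})\to T(\wt{M}'')\to 0$, so $T$ is exact; summing over $\gamma\in\ov{P}$ instead gives exactness of $\ov{T}$. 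I expect the only genuinely nontrivial step to be the well-definedness in the first two paragraphs—namely identifying the $\mf{l}$-module structure of $T(\wt{M})$ through \lemref{lem:chL} and, especially, controlling the passage from the $\wt{\Pi}$-dominance order to the $\Pi$-dominance order in condition (2). Once the weight-space viewpoint is in place, exactness itself is immediate.
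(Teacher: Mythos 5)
Your overall architecture coincides with the paper's: reduce the proposition to showing that $T$ and $\ov{T}$ send objects of $\wt{\mc O}$ to objects of $\mc O$ and $\ov{\mc O}$ (exactness then being the formal fact that extracting weight spaces of $\wt{\h}$-semisimple modules is exact), and verify condition (1) of the target category via Lemma~\ref{lem:chL}. Those parts are correct, and they are exactly what the paper compresses into ``by Lemma~\ref{lem:chL} it suffices to show $T(\wt{M})\in\mc{O}$,'' followed by a citation of \cite[Proposition 6.15]{CW} for the remaining verification.

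The gap is in your treatment of condition (2), and it is not merely a missing computation: the intermediate statement you assert is false. You claim that a weight $\mu\in P$ of $T(\wt{M})$ with $(\la^i)^\theta-\mu\in\sum_{\alpha\in\wt{\Pi}}\Z_+\alpha$ must satisfy $\la^i-\mu\in\sum_{\alpha\in\Pi}\Z_+\alpha$, i.e.\ that the \emph{same} finite set $\{\la^i\}$ bounds ${\rm wt}(T(\wt{M}))$ in the $\Pi$-dominance order. Take $\la\in P^+$ with ${}^+\la=(1,1,1)$ and all other coefficients zero, so $\la=\epsilon_1+\epsilon_2+\epsilon_3$ and $\la^\theta=3\epsilon_{1/2}$, and take $\mu=3\epsilon_1$. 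Then $\la^\theta-\mu=3(\epsilon_{1/2}-\epsilon_1)=3\alpha_{1/2}\in\sum_{\alpha\in\wt{\Pi}}\Z_+\alpha$, but $\la-\mu=-2\beta_1-\beta_2\notin\sum_{\alpha\in\Pi}\Z_+\alpha$. Moreover this $\mu$ genuinely occurs: $\mu\in P^+$ with ${}^+\mu=(3)$ and $\mu^\theta=\epsilon_{1/2}+2\epsilon_1=\la^\theta-2\alpha_{1/2}$, so $\wt{M}=\wt{L}(\mu^\theta)$ lies in $\wt{\mc O}$ with all its weights below $\la^\theta$, while $\mu$ is a weight of $T(\wt{M})$ (it is a weight of $T(L(\wt{\mf l},\mu^\theta))=L(\mf{l},\mu)$ by Lemmas~\ref{lem:change} and \ref{lem:chL}) and is not below $\la$. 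The reason your ``half-integer contributions drop out'' heuristic fails is that passing from $(\la^i)^\theta-\mu$ to $\la^i-\mu$ requires subtracting $(\la^i)^\theta-\la^i$, itself a $\Z_+$-combination of the tail roots $\alpha_r$, after which the resulting coefficients of the $\beta_n$ can become negative—exactly what happens above. Condition (2) does hold for $T(\wt{M})$, but only after enlarging the bounding set: writing $c_{-1}$ for the coefficient of $\alpha_{-1}$ and $c_r$ for that of $\alpha_r$ in $(\la^i)^\theta-\mu$, positivity of all the $c_r$ yields the partial-sum bound $\sum_{m\le n}\mu_m\le c_{-1}+\vert{}^+\la^i\vert$ for every $n$, and from this one checks that every $\mu\in P$ below $(\la^i)^\theta$ in the $\wt{\Pi}$-order lies below $\Lambda^i$ in the $\Pi$-order, where $\Lambda^i\in P^+$ is obtained from $\la^i$ by replacing ${}^+\la^i$ with the one-row partition $(\vert{}^+\la^i\vert)$. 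An argument of this kind—this is what the paper's citation of \cite[Proposition 6.15]{CW} supplies—is needed to close your proof; the row-by-row comparison with $\la^i$ itself does not work.
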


\begin{proof}
By Lemma \ref{lem:chL}, it suffices to show that $T(\wt{M})\in {\mc
O}$ and $\ov{T}(\wt{M})\in \ov{\mc O}$ for $\wt{M}\in \wt{\mc O}$.
This follows from the same argument as in \cite[Proposition
6.15]{CW}.
\end{proof}

\begin{lem}\label{borel2borel}
We have $T(\wt{\mf{u}}^+)={\mf{u}}^+$ and \
$\ov{T}(\wt{\mf{u}}^+)=\ov{\mf{u}}^+$.
\end{lem}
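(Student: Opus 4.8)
The plan is to reduce the two identities to a comparison of root spaces and then to a combinatorial computation in the $\epsilon$-coordinates of \eqref{epsilon and h}. Since $\wt{\mf u}^+$ is stable under the adjoint action of $\wt{\h}$, it is the direct sum of its root spaces $(\wt{\mf u}^+)_\alpha$, where $\alpha$ runs over the positive roots of $\DG$ that are not roots of $\wt{\mf l}$. By the very definition of the functors,
\[
T(\wt{\mf u}^+)=\bigoplus_{\alpha\in P}(\wt{\mf u}^+)_\alpha,\qquad
\ov{T}(\wt{\mf u}^+)=\bigoplus_{\alpha\in\ov{P}}(\wt{\mf u}^+)_\alpha .
\]
On the other hand, by Proposition~\ref{prop:GSG:sub} we regard $\G\subseteq\DG$, and $\mf u^+$ is the sum of the root spaces $(\DG)_\alpha$ as $\alpha$ ranges over the positive roots of $\G$ not lying in $\mf l$; by Corollary~\ref{root:mult} each such $(\DG)_\alpha$ coincides with the corresponding root space of $\G$. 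Hence it suffices to show that the positive roots of $\DG$ not in $\wt{\mf l}$ which lie in $P$ (resp. $\ov P$) are exactly the positive roots of $\G$ not in $\mf l$ (resp. of $\SG$ not in $\ov{\mf l}$), the equality of the root spaces then being supplied by Corollary~\ref{root:mult}. I will carry out the case of $T$; the case of $\ov{T}$ is identical after interchanging the roles of integer and half-integer indices.

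For the combinatorial step, write a positive root of $\DG$ as $\alpha=a_{-1}\alpha_{-1}+\sum_{i\in I}a_i\alpha_i+\sum_{r\in\hf\N}a_r\alpha_r$ with all $a\in\Z_+$, and expand it in the basis furnished by \eqref{epsilon and h}. Using $\alpha_r=\epsilon_r-\epsilon_{r+\hf}$ for $r\in\hf\N$, together with $\alpha_{-1}=\omega_{\hf}+\sum_{i\in I\cup\{-1\}}\langle\alpha_{-1},\alpha^\vee_i\rangle\omega_i$ and $\omega_{\hf}=-\epsilon_{\hf}+\omega_{-1}$ as in the proof of Lemma~\ref{lem:DeltaL}, one computes that the coefficient of $\epsilon_{\hf}$ is $a_{\hf}-a_{-1}$, that of $\epsilon_{r+\hf}$ is $a_{r+\hf}-a_r$ for $r\in\N$, while all remaining contributions are absorbed into the head coordinates $\omega_i$ $(i\in I\cup\{-1\})$. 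Since $P$ is precisely the sublattice of integral weights whose $\epsilon_s$-coefficient vanishes for every half-integer $s\in\hf+\Z_+$, the condition $\alpha\in P$ is equivalent to $a_{\hf}=a_{-1}$ and $a_{r+\hf}=a_r$ for all $r\in\N$. Substituting and using $\beta_{-1}=\alpha_{-1}+\alpha_{\hf}$, $\beta_n=\alpha_n+\alpha_{n+\hf}$ gives $\alpha=\sum_{i\in I}a_i\alpha_i+a_{-1}\beta_{-1}+\sum_{n\in\N}a_n\beta_n$, i.e.\ $\alpha$ is a nonnegative integral combination of the simple roots $\Pi$ of $\G$; moreover $\alpha\notin\wt{\mf l}$ translates, under this pairing, into positivity of the coefficient of $\beta_{-1}$ or of some $\alpha_i$ with $i\in I\setminus J$, which is exactly the condition for a $\G$-root to lie outside $\mf l$.

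It remains to see that such an $\alpha$ is genuinely a root of $\G$ (with the same multiplicity), and conversely. This is the main obstacle, and it is here that the absence of an explicit description of the root system of $\DG$ must be circumvented structurally. After the sequence of odd reflections in the proof of Proposition~\ref{prop:GSG:sub}, the simple roots $\Pi$ of $\G$ form a subset of a fundamental system of $\DG$, with $\G$ the associated sub-contragredient-superalgebra by Proposition~\ref{prop:KM:embed}. Because $\wt{\mf n}^+$ is generated by the simple root vectors and is graded by the root lattice, any root vector whose weight is a nonnegative combination of $\Pi$ is a bracket of the generators $e_\beta$ $(\beta\in\Pi)$, hence lies in $\G$; this is the mechanism behind Corollary~\ref{root:mult}, and it shows that $\alpha$ is a root of $\G$ and that $(\wt{\mf u}^+)_\alpha=(\mf u^+)_\alpha$. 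Conversely, every positive root of $\G$ not in $\mf l$ is an integral weight, hence lies in $P$, and cannot be a root of $\wt{\mf l}$ (otherwise it would lie in $\G\cap\wt{\mf l}=\mf l$), so it is a root of $\wt{\mf u}^+$. Combining these identifications yields $T(\wt{\mf u}^+)=\mf u^+$, and the verbatim argument with half-integer indices and $\beta_r=\alpha_r+\alpha_{r+\hf}$ $(r\in\hf+\Z_+)$ gives $\ov{T}(\wt{\mf u}^+)=\ov{\mf u}^+$.
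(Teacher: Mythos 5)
Your proposal is correct and takes essentially the same route as the paper's proof: both reduce the lemma to showing that the positive roots of $\DG$ lying in $P$ (resp.\ $\ov{P}$) are exactly the nonnegative integral combinations of $\Pi$ (resp.\ $\ov{\Pi}$), and then use the odd-reflection procedure (as in Proposition~\ref{prop:GSG:sub} and the construction of $\wt{\mf{b}}^c(n)$) together with Corollary~\ref{root:mult} to identify the corresponding root spaces of $\DG$ with those of $\G$ (resp.\ $\SG$). The only difference is level of detail: you carry out the $\epsilon$-coordinate computation and the inclusion ${\mf{u}}^+\subseteq T(\wt{\mf{u}}^+)$ explicitly, where the paper asserts the former and gets the latter from the observation that the root vectors for $\beta_{-1}$ and $\beta_n$ lie in $T(\wt{\mf{u}}^+)$.
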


\begin{proof}
We show  $T(\wt{\mf{u}}^+)={\mf{u}}^+$ only, as the proof of
$\ov{T}(\wt{\mf{u}}^+)=\ov{\mf{u}}^+$ is similar.

It is clear that the simple roots  $\beta_{-1}$ and $\beta_n$
($n\in\N$) lie in $T(\wt{\mf{u}}^+)$ (cf. Proposition \ref{Functor T
and G-modules}). Thus ${\mf{u}^+}\subseteq T(\wt{\mf{u}}^+)$.
Conversely, let $\alpha$ be a positive  root of $\DG$ such that
$\alpha\in P$. Then $\alpha$ is of the form $\sum_{i\in
I}a_i\alpha_i+a_{-1}\beta_{-1}+\sum_{n\in\N}b_n\beta_n$ for some
$a_i,b_n\in\Z_+$. So it remains to show that $\alpha$ is generated
by the root vectors corresponding to the simple roots $\alpha_i$,
$\beta_{-1}$, and $\beta_n$ for $i\in I$ and $n\in\N$. This follows
from the procedure of applying sequences of odd reflections to
obtain $\wt{\mf{b}}^c(n)$ from $\wt{\mf{b}}$. Now by Corollary
\ref{root:mult}, the root multiplicities also coincide. Hence,
$T(\wt{\mf{u}}^+)\subseteq {\mf{u}^+}$.
\end{proof}

\begin{prop}\label{prop:Verma to Verma}
For $\la\in{{P}}^+$, we have $T(\wt{X}(\la^\theta))=X(\la)$ and
$\ov{T}(\wt{X}(\la^\theta))=\ov{X}(\la^\natural)$, where $X=\Delta$
or $L$.
\end{prop}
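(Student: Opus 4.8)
The plan is to prove the two assertions for $X=\Delta$ and $X=L$ in turn, treating $T$ in detail and leaving $\ov T$ to the parallel argument. For $X=\Delta$ I would start from the PBW isomorphism $\wt\Delta(\la^\theta)\cong U(\wt{\mf u}^-)\otimes L(\wt{\mf l},\la^\theta)$ of $\wt{\mf l}$-modules, together with the analogous $\Delta(\la)\cong U(\mf u^-)\otimes L(\mf l,\la)$ of $\mf l$-modules. The crucial point is that $T$ is multiplicative on this tensor product. Indeed, the proof of \lemref{lem:DeltaL} shows that both $\wt{\mf u}^-$ and $L(\wt{\mf l},\la^\theta)$ are polynomial $\gl(\infty|\infty)_{>0}$-modules, so every weight of $\wt\Delta(\la^\theta)$ has non-negative $\epsilon_r$-coordinate for each $r\in T_\infty\setminus\{-1\}$. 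Since a polynomial weight lies in $P$ exactly when all of its half-integral $\epsilon_r$-coordinates vanish, and these coordinates are additive and individually non-negative on each tensor factor and on each PBW monomial, a weight $\mu+\nu$ (with $\mu$ a weight of $U(\wt{\mf u}^-)$ and $\nu$ a weight of $L(\wt{\mf l},\la^\theta)$) lies in $P$ if and only if both $\mu$ and $\nu$ do. This yields $T(\wt\Delta(\la^\theta))=T(U(\wt{\mf u}^-))\otimes T(L(\wt{\mf l},\la^\theta))$. By the same half-integral grading, $T(U(\wt{\mf u}^-))=U(T(\wt{\mf u}^-))=U(\mf u^-)$, where $T(\wt{\mf u}^-)=\mf u^-$ is the negative-root version of \lemref{borel2borel} together with the multiplicity matching of \corref{root:mult}; and $T(L(\wt{\mf l},\la^\theta))=L(\mf l,\la)$ by \lemref{lem:chL}. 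Hence $T(\wt\Delta(\la^\theta))\cong\Delta(\la)$ as $\mf l$-modules, and in particular they have the same character.

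To upgrade this to a $\G$-module isomorphism, I would observe that the top $1\otimes L(\mf l,\la)\subseteq T(\wt\Delta(\la^\theta))$ is annihilated by $\mf u^+=T(\wt{\mf u}^+)$, since $\wt{\mf u}^+$ annihilates the inducing subspace $1\otimes L(\wt{\mf l},\la^\theta)$ of $\wt\Delta(\la^\theta)$. Thus $1\otimes L(\mf l,\la)$ is a $\mf p$-submodule isomorphic to $L(\mf l,\la)$, and the universal property of $\Delta(\la)=\mathrm{Ind}_{\mf p}^{\G}L(\mf l,\la)$ gives a $\G$-module map $\Delta(\la)\to T(\wt\Delta(\la^\theta))$. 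It is surjective because its image contains $U(\mf u^-)(1\otimes L(\mf l,\la))$, which is all of $T(\wt\Delta(\la^\theta))$, and it is then an isomorphism by the character equality above. The assertion $\ov T(\wt\Delta(\la^\theta))=\ov\Delta(\la^\natural)$ follows verbatim, with $P,\mf l,\mf u^\pm$ replaced by $\ov P,\ov{\mf l},\ov{\mf u}^\pm$ and with the $\ov T$-statements of \lemref{lem:chL} and \lemref{borel2borel}.

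For $X=L$ I would invoke exactness of $T$ (\propref{prop:T is exact}) together with the $\Delta$-case just proved. Applying $T$ to the surjection $\wt\Delta(\la^\theta)\twoheadrightarrow\wt L(\la^\theta)$ shows that $T(\wt L(\la^\theta))$ is a quotient of $T(\wt\Delta(\la^\theta))=\Delta(\la)$; it is nonzero because its top Levi-constituent is $T(L(\wt{\mf l},\la^\theta))=L(\mf l,\la)\neq 0$ by \lemref{lem:chL}. Hence $T(\wt L(\la^\theta))$ is a quotient of $\Delta(\la)$ with simple top $L(\la)$, so that $[\,T(\wt L(\la^\theta)):L(\la)\,]=1$. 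The main obstacle is to rule out composition factors below the top, i.e. to show this quotient is in fact irreducible. I expect to resolve this using the standard contravariant duality $\vee$ on these BGG categories: it preserves weight spaces and therefore commutes with $T$, giving $T(M^\vee)\cong T(M)^\vee$. Since the irreducible $\wt L(\la^\theta)$ is self-dual, so is $T(\wt L(\la^\theta))$; a self-dual module with simple top $L(\la)$ has simple socle $L(\la)$, while as a quotient of $\Delta(\la)$ it contains $L(\la)$ with multiplicity one, forcing top and socle to coincide and hence $T(\wt L(\la^\theta))\cong L(\la)$. The statement $\ov T(\wt L(\la^\theta))=\ov L(\la^\natural)$ is identical.
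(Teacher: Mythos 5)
Your proof is correct, and for the simple-module half it is essentially the paper's argument; for the parabolic Verma half, however, you take a genuinely different route. The paper first proves that $T(\wt{X}(\la^\theta))$ is a highest weight $\G$-module of highest weight $\la$ via the odd-reflection machinery --- \lemref{lem:change} applied to the Borel subalgebras $\wt{\mf b}^c(n)$, following the proof of \cite[Proposition 6.16]{CW} --- and only then identifies $T(\wt{\Delta}(\la^\theta))$ with $\Delta(\la)$ by the character comparison coming from \lemref{borel2borel}. You bypass \lemref{lem:change} altogether: you get the $\mf l$-module identification $T(\wt{\Delta}(\la^\theta))\cong U(\mf u^-)\otimes L(\mf l,\la)$ from the PBW factorization together with the polynomiality of $\wt{\mf u}^-$ and of $L(\wt{\mf l},\la^\theta)$ over $\gl(\infty|\infty)_{>0}$ (the claim established inside the proof of \lemref{lem:DeltaL}), and you then recover the highest-weight property and the surjection $\Delta(\la)\twoheadrightarrow T(\wt{\Delta}(\la^\theta))$ from Frobenius reciprocity. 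This has the merit of making explicit the ``multiplicativity'' of $T$ on $U(\wt{\mf u}^-)\otimes L(\wt{\mf l},\la^\theta)$, which is exactly what the paper's one-line appeal to \lemref{borel2borel} leaves implicit; the cost is that the weight bookkeeping (additivity and non-negativity of the $\epsilon_r$-coordinates for $r\in\hf+\Z_+$, and the fact that membership in $P$ is their vanishing) must be carried out carefully, and your argument does this correctly. For $X=L$, your $\vee$-self-duality argument is the same idea as the paper's (the cited argument of \cite[Theorem 6.17]{CW} restricts the nondegenerate contravariant form on $\wt{L}(\la^\theta)$ to $T(\wt{L}(\la^\theta))$): a nondegenerate contravariant form is precisely an isomorphism with the $\vee$-dual. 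The one step you assert rather than justify is $T(M^\vee)\cong T(M)^\vee$: the duality induced on $\G$-modules by restricting the $\tau$-twist from $\DG$ uses $\tau\vert_{\G}$ (note $\tau(\G)=\G$, since $\G_\beta=\DG_\beta$ for every root $\beta$ of $\G$ by \corref{root:mult}), and this agrees with the intrinsic Chevalley automorphism of $\G$ only up to a diagonal automorphism fixing $\h$; that discrepancy is harmless for your socle/top argument, but it deserves a sentence.
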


\begin{proof}
We shall sketch the proof for $T$, as the case for $\ov{T}$ is similar.

With Lemmas \ref{lem:change} and \ref{lem:chL} at our disposal, we
may follow the proof of \cite[Proposition 6.16]{CW} and show that
$T(\wt{X}(\la^\theta))$ is a highest weight module in $\mc O$ with
highest weight $\la$.  Now Lemma \ref{borel2borel} shows that
$T(\wt{\Delta}(\la^\theta))$ has the same character as
$\Delta(\la)$, and hence it must be equal to $\Delta(\la)$. The
irreducibility of $T(\wt{L}(\la^\theta))$ follows from the same
argument as in \cite[Theorem 6.17]{CW}, and hence
$T(\wt{L}(\la^\theta))=L(\la)$.
\end{proof}

For $\la\in P^+$, we write that
\begin{equation} \label{eq:charKL}
\text{ch\,} L(\la)=\sum_{\mu\in P^+}m_{\mu\la}\text{ch\,} \Delta(\mu)
\end{equation}
with $m_{\mu\la}\in\Z$. By Propositions~ \ref{prop:T is exact} and \ref{prop:Verma to
Verma}, we conclude the following.

\begin{thm}\label{thm:irred:char}
The character formulas of the irreducible $\DG$- and $\SG$-modules
in the respective categories $\wt{\mc O}$ and $\ov{\mc O}$ are
determined by those of the irreducible $\G$-modules in $\mc{O}$.
More precisely, for  $\la \in P^+$,  the characters are given by
\begin{align*}
\text{ch\,} \wt{L}(\la^\theta) &=\sum_{\mu\in
P^+}m_{\mu\la}\text{ch\,} \wt{\Delta}(\mu^\theta),
 \\
\text{ch\,} \ov{L}(\la^\natural)&=\sum_{\mu\in
P^+}m_{\mu\la}\text{ch\,} \ov{\Delta}(\mu^\natural),
\end{align*}
where $m_{\mu\la}$ are as in \eqref{eq:charKL}.
\end{thm}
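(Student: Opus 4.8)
The plan is to transport the character identity \eqref{eq:charKL}, known in the category $\mc{O}$ of $\G$-modules, upward to $\wt{\mc O}$ and then across to $\ov{\mc O}$, relying solely on the exactness of $T$ and $\ov T$ (Proposition~\ref{prop:T is exact}) together with the fact that these functors intertwine parabolic Verma and irreducible modules (Proposition~\ref{prop:Verma to Verma}). Since $T$ and $\ov T$ are exact, each descends to a homomorphism of Grothendieck groups; concretely, on formal characters they act by retaining only the weight spaces whose weights lie in $P$, respectively $\ov P$. I would run the entire argument at the level of these Grothendieck groups, in which the parabolic Verma modules serve as a standard basis.

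First I would use the highest weight structure of $\wt{\mc O}$ to expand
\begin{equation*}
[\wt{L}(\la^\theta)]=\sum_{\mu\in P^+}\wt m_{\mu\la}\,[\wt{\Delta}(\mu^\theta)],\qquad \wt m_{\mu\la}\in\Z,
\end{equation*}
a locally finite sum which exists and is unique because the bijection $\theta\colon P^+\to\wt P^+$ identifies $\{\,\wt{\Delta}(\mu^\theta)\mid\mu\in P^+\,\}$ with the complete set of parabolic Verma modules of $\wt{\mc O}$, and distinct parabolic Verma modules have distinct highest weights. Applying the exact functor $T$ and invoking $T(\wt{L}(\la^\theta))=L(\la)$ and $T(\wt{\Delta}(\mu^\theta))=\Delta(\mu)$ from Proposition~\ref{prop:Verma to Verma}, I would obtain
\begin{equation*}
\text{ch\,}L(\la)=\sum_{\mu\in P^+}\wt m_{\mu\la}\,\text{ch\,}\Delta(\mu).
\end{equation*}
Comparing with \eqref{eq:charKL} and using that the parabolic Verma characters $\text{ch\,}\Delta(\mu)$, $\mu\in P^+$, are linearly independent (again a maximal-weight argument), I would conclude $\wt m_{\mu\la}=m_{\mu\la}$ for every $\mu$. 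This is precisely the first displayed formula of the theorem.

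For the second formula I would apply the exact functor $\ov T$ to the identity $[\wt{L}(\la^\theta)]=\sum_\mu m_{\mu\la}[\wt{\Delta}(\mu^\theta)]$ now established in the Grothendieck group of $\wt{\mc O}$, and use the remaining half of Proposition~\ref{prop:Verma to Verma}, namely $\ov T(\wt{L}(\la^\theta))=\ov{L}(\la^\natural)$ and $\ov T(\wt{\Delta}(\mu^\theta))=\ov{\Delta}(\mu^\natural)$, to read off $\text{ch\,}\ov{L}(\la^\natural)=\sum_\mu m_{\mu\la}\,\text{ch\,}\ov{\Delta}(\mu^\natural)$. I do not anticipate a genuine obstacle in this theorem: the representation-theoretic content has already been packaged into Propositions~\ref{prop:T is exact} and \ref{prop:Verma to Verma}, so what remains is essentially formal bookkeeping in Grothendieck groups. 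The only points demanding care are the existence of a locally finite Verma expansion of $[\wt{L}(\la^\theta)]$ in the infinite-rank category $\wt{\mc O}$ and the linear independence of the characters $\text{ch\,}\Delta(\mu)$; both are standard features of parabolic BGG categories and follow from the usual triangularity with respect to the partial order on weights, as in \cite[Chapter~6]{CW}.
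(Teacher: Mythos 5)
Your proposal is correct and follows essentially the same route as the paper, whose proof of Theorem~\ref{thm:irred:char} consists precisely of invoking Propositions~\ref{prop:T is exact} and \ref{prop:Verma to Verma}; your Grothendieck-group bookkeeping (expanding $[\wt{L}(\la^\theta)]$ in the $[\wt{\Delta}(\mu^\theta)]$, pushing down by $T$ to identify the coefficients with $m_{\mu\la}$ via linear independence of the $\text{ch\,}\Delta(\mu)$, then pushing down by $\ov{T}$) is exactly the argument the paper leaves implicit.
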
\vskip 2mm

Finally, let us recall the truncation functors to describe the
irreducible characters for the corresponding Lie superalgebras of
finite rank.

For $n\in\N\cup\{-1\}$, recall the finite-rank Lie (super)algebras
$\DG_n$,  $\SG_n$, and $\G_n$ from Section~\ref{Three Lie
superalgebras}, whenever they are defined. Let $\wt{P}_n$ be the set
of $\la\in \wt{P}$ such that the coefficient of $\epsilon_r$ in
$\la$ is $0$ for $r>n+\hf$. We may regard $\la\in \wt{P}_n$ as an
integral weight for $\DG_n$. Note that
$\omega=-\epsilon_{-1}+\epsilon_{1/2}-\epsilon_1+\cdots -\epsilon_n+\epsilon_{n+\hf}$
as a weight for $\DG_n$. Similarly we let $\ov{P}_n=\ov{P}\cap
\wt{P}_n$ and ${P}_n={P}\cap \wt{P}_n$ be the sets of integral
weights for $\SG_n$ and $\G_n$, respectively. Put
$\wt{P}^+_n=\wt{P}_n\cap \wt{P}^+$, $\ov{P}^+_n=\ov{P}_n\cap
\ov{P}^+$, and ${P}^+_n={P}_n\cap {P}^+$.

For $\la\in P^+_n$, we let $\Delta_n(\lambda)$ and $L_n(\la)$ be the
corresponding parabolic Verma and irreducible modules over $\G_n$
with highest weight $\la$, and let $\mc{O}_n$ denote the
corresponding category of $\G_n$-modules. Similarly,
$\ov{\Delta}_n(\la^\natural)$, $\ov{L}_n(\la^\natural)$, $\ov{\mc
O}_n$ and $\wt{\Delta}_n(\la^\theta)$, $\wt{L}_n(\la^\theta)$,
$\wt{\mc O}_n$ are defined for $\SG_n$ and $\DG_n$, respectively.

For $n< k\leq \infty$, we define the truncation functor $\mf{tr}^k_n
: \mc{O}_k \longrightarrow \mc{O}_n$ by sending $M=\bigoplus_{\gamma
\in P_k}M_\gamma$ to its subspace $\bigoplus_{\gamma \in
P_n}M_\gamma$. The truncation functors from $\ov{\mc O}_k$ and
$\wt{\mc O}_k$ to $\ov{\mc O}_n$ and  $\wt{\mc O}_n$, respectively,
are defined analogously. Here it is understood that
$\mc{O}_\infty=\mc{O}$, $\ov{\mc O}_\infty=\ov{\mc O}$, and $\wt{\mc
O}_\infty=\wt{\mc O}$. Then for  $\mu\in P^+_k$, we have
\begin{equation}\label{truncation}
\mf{tr}^k_n(X_k(\mu))=
\begin{cases}
X_n(\mu), & \text{if $\mu\in P^+_n$},\\
0, & \text{otherwise},
\end{cases}
\end{equation}
where $X=\Delta$ or $L$ (cf. \cite[Proposition 6.9]{CW}). The
counterparts of \eqref{truncation} hold for $\mf{tr}^k_n: \ov{\mc
O}_k \rightarrow \ov{\mc O}_n$ (respectively $\mf{tr}^k_n: \wt{\mc O}_k
\rightarrow \wt{\mc O}_n$) with $P^+_k$ and $X_k(\mu)$  replaced by
$\ov{P}^+_k$ and $\ov{X}_k(\mu)$ (respectively $\wt{P}^+_k$ and
$\wt{X}_k(\mu)$). Now Theorem~\ref{thm:irred:char}  has the
following finite-rank analogue thanks to the counterpart for
$\ov{\mc O}_n$ of \eqref{truncation}.

\begin{cor}\label{cor:KL}
For $\la \in P^+$ such that $\la^\natural \in \ov{P}^+_n$,   the
character formula of the irreducible $\SG_n$-module $ \ov{L}_n
(\la^\natural)$ is given by
\begin{align*}
\text{ch\,} \ov{L}_n (\la^\natural)&=\sum_{\{\mu\in P^+| \mu^\natural \in \ov{P}^+_n\}}
m_{\mu\la}\text{ch\,} \ov{\Delta}_n (\mu^\natural),
\end{align*}
where $m_{\mu\la}$ are as in \eqref{eq:charKL}.
\end{cor}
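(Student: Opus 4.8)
The plan is to deduce the corollary directly from the infinite-rank character formula for $\ov{L}(\la^\natural)$ in Theorem~\ref{thm:irred:char} by applying the truncation functor $\mf{tr}^\infty_n : \ov{\mc O} \to \ov{\mc O}_n$. First I would recall from Theorem~\ref{thm:irred:char} that
\begin{equation*}
\text{ch\,} \ov{L}(\la^\natural) = \sum_{\mu \in P^+} m_{\mu\la}\, \text{ch\,} \ov{\Delta}(\mu^\natural),
\end{equation*}
with $m_{\mu\la}$ as in \eqref{eq:charKL}. By definition $\mf{tr}^\infty_n$ sends a module $M=\bigoplus_{\gamma\in\ov{P}}M_\gamma$ to $\bigoplus_{\gamma\in\ov{P}_n}M_\gamma$, so at the level of formal characters it simply discards all weights $\gamma\notin\ov{P}_n$; thus $\text{ch\,}\mf{tr}^\infty_n(M)$ is precisely the truncation of $\text{ch\,} M$ to weights lying in $\ov{P}_n$.

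Next I would apply this truncation to both sides of the displayed formula. Since the weight spaces of modules in $\ov{\mc O}$ are finite-dimensional and the weights of each such module lie in a finite union of sets $\nu-\sum_{\alpha\in\ov{\Pi}}\Z_+\alpha$, the formal sum on the right is locally finite, so the truncation may be carried out termwise. On the left, because $\la^\natural\in\ov{P}^+_n$ by hypothesis, the counterpart of \eqref{truncation} for $\ov{\mc O}$ gives $\mf{tr}^\infty_n(\ov{L}(\la^\natural))=\ov{L}_n(\la^\natural)$, so the truncated left-hand side is $\text{ch\,}\ov{L}_n(\la^\natural)$. On the right, the same property yields $\mf{tr}^\infty_n(\ov{\Delta}(\mu^\natural))=\ov{\Delta}_n(\mu^\natural)$ when $\mu^\natural\in\ov{P}^+_n$ and $0$ otherwise; in the latter case $\ov{\Delta}(\mu^\natural)$ carries no weight in $\ov{P}_n$, so its contribution disappears entirely after truncation. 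Consequently only the indices $\mu$ with $\mu^\natural\in\ov{P}^+_n$ survive, and one obtains
\begin{equation*}
\text{ch\,} \ov{L}_n(\la^\natural) = \sum_{\{\mu \in P^+ \mid \mu^\natural \in \ov{P}^+_n\}} m_{\mu\la}\, \text{ch\,} \ov{\Delta}_n(\mu^\natural),
\end{equation*}
which is the asserted formula.

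The routine part is identifying the character of a truncated module with the weight-truncated character, which is immediate from the definition of $\mf{tr}^\infty_n$. The only input requiring the structure theory is the vanishing $\mf{tr}^\infty_n(\ov{\Delta}(\mu^\natural))=0$ for $\mu^\natural\notin\ov{P}^+_n$, i.e.\ that such a parabolic Verma module has no weight in $\ov{P}_n$; this is exactly the $\ov{\mc O}$-analogue of \eqref{truncation} already quoted in the text, so I would invoke it as given. I expect no serious obstacle beyond verifying the local finiteness that legitimizes interchanging the truncation with the infinite sum, which follows from the defining finiteness conditions of the category $\ov{\mc O}$.
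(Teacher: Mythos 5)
Your proposal is correct and is essentially identical to the paper's own argument: the paper derives Corollary~\ref{cor:KL} precisely by applying the truncation functor $\mf{tr}^\infty_n$ to the character identity of Theorem~\ref{thm:irred:char}, invoking the $\ov{\mc O}$-counterpart of \eqref{truncation} to send $\ov{L}(\la^\natural)\mapsto\ov{L}_n(\la^\natural)$ and $\ov{\Delta}(\mu^\natural)\mapsto\ov{\Delta}_n(\mu^\natural)$ or $0$. Your additional remarks on local finiteness and on termwise truncation of the character sum are exactly the routine verifications the paper leaves implicit.
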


\begin{rem}  \label{rem:B}
(1) Assume that $I_{\ov{1}}=\emptyset$ (and thus $B$ is a GCM),
$\langle\gamma_j,\alpha^\vee_{-1}\rangle=c_j\le 0$ for all $1\leq
j\leq p$, and $A^{\texttt{hd}}$ is symmetrizable. Then the integers
$m_{\mu\la}$ are often the evaluation at $q=1$ of
various parabolic Kazhdan-Lusztig polynomials for the Hecke algebra
associated to the Weyl group of a symmetrizable Kac-Moody Lie
algebra $\G$ (see Kashiwara-Tanisaki
\cite{KaTa2} and the references therein).

(2) Assume that $B$ is an anisotropic SGCM
satisfying additionally that the
square length of $\alpha_i$ has the same parity as $i$, for all
$i\in I$ (this additional condition might be removable eventually).
For such a $B$, an additional equivalence of categories
between $\mc O_n$ and a BGG category of some Kac-Moody Lie algebra
given in \cite[Remark~4.11]{CFLW}, when combined with
Corollary~\ref{cor:KL}, still allows us to determine the characters
of irreducible $\SG_n$-modules in the category $\ov{\mc O}_n$ in
terms of those of the corresponding Kac-Moody Lie algebra.

(3) Recall that $\G=\G^\circ+\C d$.  We note that in the categories that we consider in this paper the multiplicities $m_{\mu\la}$ in \eqref{eq:charKL} for $\G$ and $\G^\circ$ are identical. This can be seen as follows: First, note that the respective irreducible $\mf l$- and $\mf l^\circ$-modules are the same using the exact same type of argument as in Exercise 2.1 in \cite{CW}. From this it follows that the parabolic $\G$- and $\G^\circ$-modules are also the same. Now, since the irreducible $\G$-modules and $\G^\circ$-modules are also the same, we conclude that the respective composition factors in the parabolic Verma modules coincide.
\end{rem}

\begin{cor}\label{cor:exceptional types}
The characters of the irreducible modules over the exceptional Lie
superalgebras $G(3)$, $F(3|1)$, and $D(2|1,\alpha)$ with
$\alpha\in\N$ in the BGG category, with respect to the distinguished
fundamental systems, are determined by those over symmetrizable
Kac-Moody Lie algebras.
\end{cor}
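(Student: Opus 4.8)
The plan is to realize each of the three distinguished Cartan matrices of Example~\ref{ex:except:simples} as a head matrix $A^{\texttt{hd}}$ and then feed the character machinery of Corollary~\ref{cor:KL} together with Remark~\ref{rem:B}(1). In each matrix the unique isotropic odd simple root is the first one, which we take as $\alpha_{-1}$; deleting its row and column leaves the submatrix $B$. For $G(3)$ this $B$ is the Cartan matrix of type $G_2$, for $F(3|1)$ it is of type $B_3$, and for $D(2|1,\alpha)$ it is $A_1\oplus A_1$. In all three cases $B$ is a symmetrizable GCM with $I_{\ov{1}}=\emptyset$ and all $a_{ii}=2$, so \eqref{eq:locally nilpotent} holds and \framebox{$\mf{B}$} has no odd node. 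Reading off the off-diagonal entries attaching $\alpha_{-1}$ to \framebox{$\mf{B}$} gives $(b_j,c_j)=(-1,-1)$ for $G(3)$ and $F(3|1)$, and $(b_1,c_1)=(-1,-1)$, $(b_2,c_2)=(-1,-\alpha)$ for $D(2|1,\alpha)$; since $\alpha\in\N$, all these satisfy \eqref{eq:pseudoCartan for hd}, so $A^{\texttt{hd}}$ is a SGCM and the head diagram \eqref{Head diagram} is defined.

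With these identifications, the superalgebra $\SG_{-1}$ attached to \eqref{SG diagram} with $n=-1$ has $\SG^\circ_{-1}=\G(A^{\texttt{hd}})$ equal to the given exceptional Lie superalgebra with its distinguished fundamental system; the extra summand $\C d$ only records an $\epsilon$-grading and does not affect irreducible characters (cf.\ the argument of Remark~\ref{rem:B}(3) applied to $\SG$ in place of $\G$). Next, since $c_j\le 0$ for all $j$ (here again $\alpha\in\N$ is precisely what forces $c_2=-\alpha\in-\Z_+$) and $A^{\texttt{hd}}$ is symmetrizable, as recorded in the example following Proposition~\ref{crit:KM}, Proposition~\ref{crit:KM} shows that the associated Lie algebra $\G$ is a symmetrizable Kac-Moody Lie algebra (of affine type for $G(3)$, and of finite, affine, or indefinite type for $D(2|1,\alpha)$ according to the value of $\alpha$).

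Finally I would take the BGG category to be $\ov{\mc O}_{-1}$ with $J=\emptyset$, so that the $\ov{\Delta}_{-1}$ are ordinary Verma modules; the constraint $\la^\natural\in\ov{P}^+_{-1}$ forces ${}^+\la=\emptyset$, and as $\la$ then ranges over this sector of $P^+$ the weights $\la^\natural=\kappa\omega+\sum_{i}\kappa_i\omega_i$ sweep out every integral highest weight of the superalgebra. Corollary~\ref{cor:KL} gives $\text{ch}\,\ov{L}_{-1}(\la^\natural)=\sum_{\mu}m_{\mu\la}\,\text{ch}\,\ov{\Delta}_{-1}(\mu^\natural)$ with $m_{\mu\la}$ the Verma multiplicities \eqref{eq:charKL} on the $\G$-side, and by Remark~\ref{rem:B}(1) — applicable since $\G$ is symmetrizable Kac-Moody with $I_{\ov{1}}=\emptyset$, $c_j\le 0$, and $A^{\texttt{hd}}$ symmetrizable — these $m_{\mu\la}$ are evaluations at $q=1$ of Kazhdan-Lusztig polynomials for $\G$ via Kashiwara-Tanisaki, which is the assertion. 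The only real work is the bookkeeping of the first two paragraphs; the point to get right is that deleting the odd node yields an anisotropic symmetrizable $B$ and that $\alpha\in\N$ makes $c_2=-\alpha$ non-positive, without which both \eqref{eq:pseudoCartan for hd} and the Kac-Moody-algebra conclusion for $\G$ would fail.
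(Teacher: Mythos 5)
Your proposal is correct and takes essentially the same route as the paper's own proof: realize each exceptional SGCM as $A^{\texttt{hd}}$ so that $\SG_{-1}$ recovers the exceptional superalgebra with its distinguished fundamental system, invoke Proposition~\ref{crit:KM} (via the symmetrizability of $A^{\texttt{hd}}$ and $c_j\le 0$, which is where $\alpha\in\N$ enters) to conclude that $\G$ is a symmetrizable Kac-Moody Lie algebra, and then apply Corollary~\ref{cor:KL} at $n=-1$ with $J=\emptyset$; your explicit verification of \eqref{eq:pseudoCartan for hd}, \eqref{eq:locally nilpotent}, the treatment of the $\C d$ summand, and the observation that $\la^\natural\in\ov{P}^+_{-1}$ forces ${}^+\la=\emptyset$ merely spell out what the paper leaves implicit. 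One cosmetic slip: $\G$ here is of \emph{infinite} rank, so your parenthetical assigning it affine/finite/indefinite type can only apply to the finite-rank head matrix $\underline{A}^{\texttt{hd}}$ (or to the truncations $\G_n$), not to $\G$ itself, though this does not affect the argument.
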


\begin{proof}
If we let $A^{\texttt{hd}}$ be the SGCM's
of the exceptional finite-dimensional Lie superalgebras $G(3)$,
$F(3|1)$, and $D(2|1,\alpha)$ in Example \ref{ex:except:simples},
then the respective Dynkin diagrams of Lie superalgebras $\DG$
become the following:\vskip 5mm

\begin{center}
\hskip 1.5cm \setlength{\unitlength}{0.22in}
\begin{picture}(20,2)
\put(0.50,1){\makebox(0,0)[c]{$\bigcirc$}}
\put(2.85,1){\makebox(0,0)[c]{$\bigcirc$}}
\put(5.25,1){\makebox(0,0)[c]{\Large $\otimes$}}
\put(7.4,1){\makebox(0,0)[c]{\Large $\otimes$}}
\put(11.7,1){\makebox(0,0)[c]{\Large $\otimes$}}
\put(9.5,1){\makebox(0,0)[c]{\Large $\otimes$}}
\put(14,1){\makebox(0,0)[c]{\Large $\otimes$}}
\put(16.2,1){\makebox(0,0)[c]{\Large $\otimes$}}
\put(18.8,1){\makebox(0,0)[c]{$\cdots$}}
\put(0.8,1){\line(1,0){1.7}}
\put(3.2,1){\line(1,0){1.7}}
\put(5.6,1){\line(1,0){1.5}}
\put(7.7,1){\line(1,0){1.5}}
\put(9.8,1){\line(1,0){1.5}}
\put(12,1){\line(1,0){1.7}}
\put(14.3,1){\line(1,0){1.5}}
\put(16.5,1){\line(1,0){1.5}}
\put(5.7,1.5){\tiny{$(1,1)$}}
\put(7.5,1.5){\tiny{$(-1,-1)$}}
\put(10.1,1.5){\tiny{$(1,1)$}}
\put(11.8,1.5){\tiny{$(-1,-1)$}}
\put(14.5,1.5){\tiny{$(1,1)$}}
\put(3.1,1.5){\tiny{$(-1,-1)$}}
\put(0.7,1.5){\tiny{$(-1,-3)$}}
\put(5.2,0){\makebox(0,0)[c]{\tiny $\alpha_{-1}$}}
\put(2.8,0){\makebox(0,0)[c]{\tiny $\gamma_{1}$}}
\end{picture}
\end{center}\vskip 3mm

\begin{center}
\hskip 1cm \setlength{\unitlength}{0.22in}
\begin{picture}(19,2)
\put(-1.80,1){\makebox(0,0)[c]{$\bigcirc$}}
\put(0.50,1){\makebox(0,0)[c]{$\bigcirc$}}
\put(2.85,1){\makebox(0,0)[c]{$\bigcirc$}}
\put(5.25,1){\makebox(0,0)[c]{\Large $\otimes$}}
\put(7.4,1){\makebox(0,0)[c]{\Large $\otimes$}}
\put(11.7,1){\makebox(0,0)[c]{\Large $\otimes$}}
\put(9.5,1){\makebox(0,0)[c]{\Large $\otimes$}}
\put(14,1){\makebox(0,0)[c]{\Large $\otimes$}}
\put(16.2,1){\makebox(0,0)[c]{\Large $\otimes$}}
\put(18.8,1){\makebox(0,0)[c]{$\cdots$}}
\put(-1.5,1){\line(1,0){1.7}}
\put(0.8,1){\line(1,0){1.7}}
\put(3.2,1){\line(1,0){1.7}}
\put(5.6,1){\line(1,0){1.5}}
\put(7.7,1){\line(1,0){1.5}}
\put(9.8,1){\line(1,0){1.5}}
\put(12,1){\line(1,0){1.7}}
\put(14.3,1){\line(1,0){1.5}}
\put(16.5,1){\line(1,0){1.5}}
\put(5.7,1.5){\tiny{$(1,1)$}}
\put(7.5,1.5){\tiny{$(-1,-1)$}}
\put(10.1,1.5){\tiny{$(1,1)$}}
\put(11.8,1.5){\tiny{$(-1,-1)$}}
\put(14.5,1.5){\tiny{$(1,1)$}}
\put(3.1,1.5){\tiny{$(-1,-1)$}}
\put(0.7,1.5){\tiny{$(-1,-2)$}}
\put(-1.7,1.5){\tiny{$(-1,-1)$}}
\put(5.2,0){\makebox(0,0)[c]{\tiny $\alpha_{-1}$}}
\put(2.8,0){\makebox(0,0)[c]{\tiny $\gamma_{1}$}}
\end{picture}
\end{center}\vskip 3mm

\begin{center}
\hskip 0cm \setlength{\unitlength}{0.22in}
\begin{picture}(20,3.2)
\put(3.4,2.8){\makebox(0,0)[c]{$\bigcirc$}}
\put(3.35,-0.9){\makebox(0,0)[c]{$\bigcirc$}}
\put(5.25,1){\makebox(0,0)[c]{\Large $\otimes$}}
\put(7.4,1){\makebox(0,0)[c]{\Large $\otimes$}}
\put(11.7,1){\makebox(0,0)[c]{\Large $\otimes$}}
\put(9.5,1){\makebox(0,0)[c]{\Large $\otimes$}}
\put(14,1){\makebox(0,0)[c]{\Large $\otimes$}}
\put(16.2,1){\makebox(0,0)[c]{\Large $\otimes$}}
\put(18.8,1){\makebox(0,0)[c]{$\cdots$}}
\put(3.65,2.6){\line(1,-1){1.4}}
\put(3.6,-0.7){\line(1,1){1.45}}
\put(5.6,1){\line(1,0){1.5}}
\put(7.7,1){\line(1,0){1.5}}
\put(9.8,1){\line(1,0){1.5}}
\put(12,1){\line(1,0){1.7}}
\put(14.3,1){\line(1,0){1.5}}
\put(16.5,1){\line(1,0){1.5}}
\put(5.8,1.5){\tiny{$(1,1)$}}
\put(7.5,1.5){\tiny{$(-1,-1)$}}
\put(10,1.5){\tiny{$(1,1)$}}
\put(11.8,1.5){\tiny{$(-1,-1)$}}
\put(14.5,1.5){\tiny{$(1,1)$}}
\put(4.3,2.3){\tiny{$(-1,-1)$}}
\put(2.5,0.4){\tiny{$(-1,-\alpha)$}}
\put(5.4,0){\makebox(0,0)[c]{\tiny $\alpha_{-1}$}}
\put(2.5,2.8){\makebox(0,0)[c]{\tiny $\gamma_{1}$}}
\put(2.5,-0.8){\makebox(0,0)[c]{\tiny $\gamma_{2}$}}
\end{picture}
\end{center}
\vskip 5mm

By Corollary~ \ref{crit:KM}, the corresponding Lie algebras $\G$ are
all symmetrizable Kac-Moody Lie algebras (of infinite rank). We note
that the Lie superalgebras $\SG$ contains subalgebras $\SG_{-1}$
which are isomorphic to these finite-dimensional
exceptional Lie superalgebras.
Finally, we apply Corollary~\ref{cor:KL} to complete the proof.
\end{proof}

\begin{rem}
The super dimensions for finite-dimensional irreducible modules over
these exceptional simple Lie superalgebras were studied earlier in
\cite{TM,KW1}. For $D(2|1,\alpha)$, finite-dimensional irreducible
character formulas were obtained  in \cite{Ger} (see also \cite{vdJ}). Recently, in
\cite{SZ} finite-dimensional irreducible character formulas for
all these Lie superalgebras were computed.
\end{rem}

\section{Super duality}\label{Super duality}

In this section we continue to work under the assumptions
\eqref{eq:pseudoCartan for hd} and \eqref{eq:locally nilpotent}
on  $\DG$, $\SG$, and $\G$ as in Section \ref{Irreducible characters}.
Let us first briefly recall the notion of Kostant (co)homology
groups of the nilradicals of $\DG$, $\SG$, and $\G$ with
coefficients in modules from $\wt{\mc O}$, $\ov{\mc O}$, and
$\mc{O}$, respectively (see, e.g., \cite[Section 6.4]{CW} and the
references therein).

For $\wt{M}\in \wt{\mc{O}}$, let $M=T(\wt{M})$ and
$\ov{M}=\ov{T}(\wt{M})$. We denote by $H_n(\wt{\mf u}^-,\wt{M})$ for
$n\in \Z_+$ the Kostant $\wt{\mf u}^-$-homology groups with
coefficients in $\wt{M}$, which are determined from the chain
complex $\wt{d}: \Lambda(\wt{\mf u}^-)\otimes \wt{M} \longrightarrow
\Lambda(\wt{\mf u}^-)\otimes \wt{M}$, where $\Lambda(\wt{\mf
u}^-)=\bigoplus_{n\geq 0}\La^n \wt{\mf u}^-$ denotes the super
exterior algebra generated by $\wt{\mf u}^-$. Note that the boundary operator $\wt{d}$ is
an $\wt{\mf l}$-module homomorphism, and $H_n(\wt{\mf u}^-,\wt{M})$
is a semisimple $\wt{\mf l}$-module since $\Lambda(\wt{\mf u}^-)$
and $\Lambda(\wt{\mf u}^-)\otimes \wt{M}$ are semisimple over
$\wt{\mf l}$. The homology groups $H_n(\ov{\mf u}^-,\ov{M})$ and
$H_n({\mf u}^-,{M})$ are defined by the chain complexes $\ov{d}:
\Lambda(\ov{\mf u}^-)\otimes \ov{M} \longrightarrow \Lambda(\ov{\mf
u}^-)\otimes \ov{M}$ and ${d}: \Lambda({\mf u}^-)\otimes {M}
\longrightarrow \Lambda({\mf u}^-)\otimes {M}$, which are semisimple
over $\ov{\mf l}$ and $\mf{l}$, respectively.

The cohomology groups $H^n(\wt{\mf u}^+,\wt{M})$ for $n\in \Z_+$ are
defined by the cochain complex $\wt{\partial} : C(\wt{\mf
u}^+,\wt{M})\longrightarrow C(\wt{\mf u}^+,\wt{M})$, where
$C(\wt{\mf u}^+,\wt{M})= \bigoplus_{n\geq 0} {\rm Hom}(\La^n \wt{\mf
u}^+ ,\wt{M})$ and ${\rm Hom}(\La^n \wt{\mf u}^+ ,\wt{M})=(\La^n
\wt{\mf u}^+)^*\otimes \wt{M}$ with $(\La^n \wt{\mf u}^+)^*$
denoting the restricted dual of $\La^n \wt{\mf u}^+$. The cohomology
groups $H^n(\ov{\mf u}^+,\ov{M})$ and $H^n({\mf u}^+,{M})$ are
defined similarly.

Let $\tau$ be the automorphism of $\DG$ given by
$\tau(e_i)=-(-1)^{p(e_i)}f_i$, $\tau(f_i)=-e_i$ and $\tau(h)=-h$ for
$i\in \wt{I}$ and $h\in \wt{\h}$. We denote by $\wt{M}^\vee$ the
restricted dual $\wt{M}^*=\bigoplus_{\mu}\wt{M}^*_\mu$ with
$\tau$-twisted $\DG$-action, that is, $(x\cdot
f)(v)=(-1)^{p(x)p(f)+1}f(\tau(x)v)$ for $v\in \wt{M}$, and
homogeneous elements $x\in \DG$, $f\in \wt{M}^*$. Then we have the
following duality:
\begin{equation}\label{Poincare duality}
H_n(\wt{\mf u}^-,\wt{M})\cong H^n(\wt{\mf u}^+,\wt{M}^\vee),
\end{equation}
for $n\in \Z_+$ as $\wt{\mf l}$-modules (cf. \cite[Theorem 6.24]{CW}
and \cite{L}). Similar dualities also hold for $H_n(\ov{\mf u}^-,\ov{M})$
and $H_n({\mf u}^-,{M})$.

\begin{thm}\label{thm:Kostant} For $\wt{M}\in \wt{\mc O}$ and $n\in\Z_+$, we have
\begin{itemize}
\item[(1)] $T(H_n(\wt{\mf u}^-,\wt{M}))\cong H_n({\mf u}^-,{M})$ as $\mf{l}$-modules,

\item[(2)]
$\ov{T}(H_n(\wt{\mf u}^-,\wt{M}))\cong H_n(\ov{\mf u}^-,\ov{M})$ as
$\ov{\mf{l}}$-modules.
\end{itemize}
\end{thm}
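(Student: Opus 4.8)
The plan is to reduce \thmref{thm:Kostant} to the exactness of $T$ and $\ov T$ together with the identification of irreducible Levi modules in \lemref{lem:chL}; the genuine content is a comparison that cannot be made at the chain level and must be deferred to homology.

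First I would observe that the Kostant boundary operator $\wt d$ on $\Lambda(\wt{\mf u}^-)\otimes\wt M$ is weight preserving, being $\wt{\mf l}$-equivariant with $\wt{\h}\subseteq\wt{\mf l}$. Since $T$ is the operation of extracting the $P$-graded components it is exact (\propref{prop:T is exact}) and commutes with $\wt d$, so it carries $(\Lambda(\wt{\mf u}^-)\otimes\wt M,\wt d)$ to its $P$-weight subcomplex and
$$T\big(H_n(\wt{\mf u}^-,\wt M)\big)\cong H_n\big(T(\Lambda(\wt{\mf u}^-)\otimes\wt M)\big).$$
By \lemref{borel2borel} and its analogue for the opposite radicals one has $\mf u^-=T(\wt{\mf u}^-)$ and $M=T(\wt M)$; as $M$ is a $\G$-module (\propref{Functor T and G-modules}), $\wt d$ restricts to the differential of $\Lambda(\mf u^-)\otimes M$, so the inclusion $\Lambda(\mf u^-)\otimes M\hookrightarrow T(\Lambda(\wt{\mf u}^-)\otimes\wt M)$ is a chain map whose source computes $H_n(\mf u^-,M)$. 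Statement (1) is thus the assertion that this inclusion is a quasi-isomorphism.

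Next I would note that each chain group $\Lambda^k(\wt{\mf u}^-)\otimes\wt M$, hence each homology group, is a semisimple $\wt{\mf l}$-module all of whose constituents have the form $L(\wt{\mf l},\mu^\theta)$ with $\mu\in P^+$: these modules are integrable over $\wt{\mf l}^{\,\texttt{hd}}$ and polynomial over $\gl(\infty|\infty)_{>0}$, so the argument of \lemref{lem:DeltaL} applies, and the homology is a subquotient of the same type. Writing $H_n(\wt{\mf u}^-,\wt M)=\bigoplus_\mu L(\wt{\mf l},\mu^\theta)^{\oplus c_n(\mu)}$, \lemref{lem:chL} gives $T(H_n(\wt{\mf u}^-,\wt M))=\bigoplus_\mu L(\mf l,\mu)^{\oplus c_n(\mu)}$, while $H_n(\mf u^-,M)$ is independently a sum of $L(\mf l,\mu)$'s. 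Hence (1) reduces to the equality $[H_n(\mf u^-,M):L(\mf l,\mu)]=c_n(\mu)$ of multiplicities.

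The main obstacle is precisely this multiplicity matching, and it is here that the subtlety of super duality appears. The functor $T$ does not commute with tensor or exterior powers: $T(\Lambda^k(\wt{\mf u}^-)\otimes\wt M)$ is strictly larger than $\Lambda^k(\mf u^-)\otimes M$, the extra vectors arising from cancellation of the half-integral $\epsilon$-weights between $\Lambda(\wt{\mf u}^-)$ and $\wt M$, so no identification of the complexes is possible and the comparison survives only after passing to homology. I would resolve it as in the finite-dimensional classical setting, adapting the argument there, cf.\ \cite[\S 6.4]{CW} (see also \cite{L}): one shows that the quotient complex $T(\Lambda(\wt{\mf u}^-)\otimes\wt M)\big/\big(\Lambda(\mf u^-)\otimes M\big)$ is acyclic, whence the long exact sequence forces the inclusion of the previous paragraph to be a quasi-isomorphism; the combinatorial input is the hook-Schur/Littlewood--Richardson identity already underlying \lemref{lem:chL}, now applied to $\Lambda^k\wt{\mf u}^-\otimes L(\wt{\mf l},\nu^\theta)$. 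Part (2) follows by the identical argument with $P$, $\epsilon_n$ and the first isomorphism of \lemref{lem:chL} replaced by $\ov P$, $\epsilon_r$ $(r\in\hf+\Z_+)$ and the second, so that $L(\wt{\mf l},\mu^\theta)\mapsto L(\ov{\mf l},\mu^\natural)$; should it be more convenient, the Poincaré duality \eqnref{Poincare duality} permits running the whole argument in cohomology.
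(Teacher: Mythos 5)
Your reduction in the first two paragraphs is fine and agrees with the paper: $T$ is exact and commutes with $\wt d$, so it suffices to compare $H_n\bigl(T(\Lambda(\wt{\mf u}^-)\otimes\wt M)\bigr)$ with $H_n({\mf u}^-,M)$, and semisimplicity over the Levi subalgebras together with \lemref{lem:chL} then handles the $\mf l$-module identification. But your third paragraph builds the rest of the proof around a false assertion. You claim that $T(\Lambda^k(\wt{\mf u}^-)\otimes\wt M)$ is strictly larger than $\Lambda^k({\mf u}^-)\otimes M$ because half-integral $\epsilon$-weights of $\Lambda(\wt{\mf u}^-)$ and of $\wt M$ can cancel. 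They cannot: by the claim established in the proof of \lemref{lem:DeltaL}, $\wt{\mf u}^-$ is a \emph{polynomial} $\gl(\infty|\infty)_{>0}$-module, i.e.\ every root of $\wt{\mf u}^-$ has non-negative coefficients on all $\epsilon_r$, $r\in\hf\N$ (roots of $\wt{\mf u}^-$ not involving $\alpha_{-1}$ have no $\epsilon$-components at all); and every $\wt M\in\wt{\mc O}$ is, as an $\wt{\mf l}$-module, a direct sum of modules $L(\wt{\mf l},\mu^\theta)$, which are likewise polynomial over $\gl(\infty|\infty)_{>0}$, so the weights of $\wt M$ also have non-negative $\epsilon_r$-coefficients. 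A sum of non-negative integers vanishes only if every term vanishes, so a weight of $\Lambda(\wt{\mf u}^-)\otimes\wt M$ lies in $P$ exactly when the weight of each tensor factor does. Combined with \lemref{borel2borel} and \corref{root:mult}, this yields the equality $T(\Lambda(\wt{\mf u}^-)\otimes\wt M)=\Lambda({\mf u}^-)\otimes M$ with $T[\wt d]=d$, which is precisely the first (and decisive) step of the paper's proof; the same positivity argument, applied to the integral-indexed $\epsilon_n$, gives the corresponding equality for $\ov T$.

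Because this equality holds, the ``main obstacle'' you describe is vacuous: the quotient complex you propose to study is zero, and the acyclicity argument you defer to \cite[Section 6.4]{CW} is never actually carried out in your write-up --- you give no proof of it, and the cited reference in fact establishes the chain-level equality above rather than acyclicity of a non-zero quotient. As written, the proposal therefore has a genuine gap at its central step: the multiplicity matching is made to rest on an unproved (and unnecessary) acyclicity claim, while the one fact that makes the theorem work --- no cancellation of $\epsilon$-weights, hence identification of the complexes at the chain level --- is explicitly denied.
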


\begin{proof}
By Lemma \ref{borel2borel}, we have $T(\Lambda(\wt{\mf u}^-)\otimes
\wt{M})= \Lambda({\mf u}^-)\otimes {M}$ and $\ov{T}(\Lambda(\wt{\mf
u}^-)\otimes \wt{M})= \Lambda(\ov{\mf u}^-)\otimes \ov{M}$  with
$T[\wt{d}]=d$ and $\ov{T}[\wt{d}]=\ov{d}$, respectively. Since the
chain complexes are semisimple over the Levi subalgebras, we have
isomorphisms between the homology groups by Lemma \ref{lem:chL} and
Proposition \ref{prop:T is exact}.
\end{proof}

\begin{cor}\label{cor:Kostant}
For $\la\in P^+$ and $n\in \Z_+$, we have
\begin{itemize}
\item[(1)]
$T(H_n(\wt{\mf u}^-,\wt{L}(\la^\theta)))\cong H_n({\mf
u}^-,{L}(\lambda))$ as $\mf{l}$-modules,

\item[(2)]
$\ov{T}(H_n(\wt{\mf u}^-,\wt{L}(\la^\theta)))\cong H_n(\ov{\mf
u}^-,\ov{L}(\la^\natural))$ as $\ov{\mf{l}}$-modules.
\end{itemize}
\end{cor}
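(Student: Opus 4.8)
The plan is to obtain this corollary as a direct specialization of \thmref{thm:Kostant}. That theorem already establishes the desired isomorphisms $T(H_n(\wt{\mf u}^-,\wt{M}))\cong H_n(\mf u^-,M)$ and $\ov{T}(H_n(\wt{\mf u}^-,\wt{M}))\cong H_n(\ov{\mf u}^-,\ov{M})$ for an \emph{arbitrary} module $\wt{M}\in\wt{\mc O}$, where $M=T(\wt{M})$ and $\ov{M}=\ov{T}(\wt{M})$. Thus it suffices to take $\wt{M}=\wt{L}(\la^\theta)$ and to identify the resulting modules $M$ and $\ov{M}$ in this particular case.

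First I would verify the hypothesis of \thmref{thm:Kostant}, namely that $\wt{L}(\la^\theta)$ lies in $\wt{\mc O}$; this is precisely the content of \lemref{lem:DeltaL} (applied with $X=L$). Next I would invoke \propref{prop:Verma to Verma}, again with $X=L$, which yields the identifications $T(\wt{L}(\la^\theta))=L(\la)$ and $\ov{T}(\wt{L}(\la^\theta))=\ov{L}(\la^\natural)$. Substituting these into the two isomorphisms furnished by \thmref{thm:Kostant} produces exactly statements (1) and (2), as $\mf l$- and $\ov{\mf l}$-modules respectively.

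Since the entire homological argument—the compatibility $T[\wt d]=d$, $\ov{T}[\wt d]=\ov d$ with the Koszul differentials, together with the semisimplicity of the chain complexes over the Levi subalgebras and the exactness of $T$, $\ov{T}$—has already been carried out at the level of a general $\wt{M}\in\wt{\mc O}$ in \thmref{thm:Kostant}, there is no genuine obstacle at this stage. The only point requiring care is the bookkeeping that the exact functors $T$ and $\ov{T}$ send the specific irreducible $\wt{L}(\la^\theta)$ to $L(\la)$ and to $\ov{L}(\la^\natural)$, and this matching of simple modules across the super duality functors is exactly what \propref{prop:Verma to Verma} supplies. Hence the corollary follows immediately.
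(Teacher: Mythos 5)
Your proposal is correct and matches the paper's intended argument: the paper states Corollary~\ref{cor:Kostant} without proof precisely because it is the specialization of Theorem~\ref{thm:Kostant} to $\wt{M}=\wt{L}(\la^\theta)$, with membership in $\wt{\mc O}$ supplied by Lemma~\ref{lem:DeltaL} and the identifications $T(\wt{L}(\la^\theta))=L(\la)$, $\ov{T}(\wt{L}(\la^\theta))=\ov{L}(\la^\natural)$ supplied by Proposition~\ref{prop:Verma to Verma}. Your bookkeeping of these ingredients is exactly what the paper leaves implicit.
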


\begin{cor}\label{cor:Kostant-2}
For $\la\in P^+$ and $n\in \Z_+$, we have
\begin{itemize}
\item[(1)]
$T(H^n(\wt{\mf u}^+,\wt{L}(\la^\theta)))\cong H^n({\mf
u}^+,{L}(\lambda))$ as $\mf{l}$-modules,

\item[(2)]
$\ov{T}(H^n(\wt{\mf u}^+,\wt{L}(\la^\theta)))\cong H^n(\ov{\mf
u}^+,\ov{L}(\la^\natural))$ as $\ov{\mf{l}}$-modules.
\end{itemize}
\end{cor}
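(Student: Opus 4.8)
The plan is to reduce the cohomological statement to its homological counterpart, Corollary~\ref{cor:Kostant}, using the Poincar\'e-type duality \eqref{Poincare duality} together with the self-duality of irreducible highest weight modules under the restricted dual $(-)^\vee$. The first ingredient I would record is that $\wt{L}(\la^\theta)^\vee\cong\wt{L}(\la^\theta)$, and likewise $L(\la)^\vee\cong L(\la)$ and $\ov{L}(\la^\natural)^\vee\cong\ov{L}(\la^\natural)$. Indeed, the $\tau$-twist preserves weights, so $\wt{L}(\la^\theta)^\vee$ has the same character (in particular it lies in $\wt{\mc O}$), while $\tau$ interchanges $\wt{\mf n}^+$ and $\wt{\mf n}^-$; hence the functional dual to a highest weight vector is again a highest weight vector of weight $\la^\theta$ generating the (irreducible) dual, forcing $\wt{L}(\la^\theta)^\vee\cong\wt{L}(\la^\theta)$.

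Next I would rewrite \eqref{Poincare duality} in cohomological form: replacing $\wt{M}$ by $\wt{M}^\vee$ and using $\wt{M}^{\vee\vee}\cong\wt{M}$ gives $H^n(\wt{\mf u}^+,\wt{M})\cong H_n(\wt{\mf u}^-,\wt{M}^\vee)$ as $\wt{\mf l}$-modules. Taking $\wt{M}=\wt{L}(\la^\theta)$ and applying the self-duality above yields
\[
H^n(\wt{\mf u}^+,\wt{L}(\la^\theta))\cong H_n(\wt{\mf u}^-,\wt{L}(\la^\theta))
\]
as $\wt{\mf l}$-modules. The identical argument, applied to the analogous dualities for $\G$ and $\SG$ mentioned after \eqref{Poincare duality}, produces $H^n({\mf u}^+,L(\la))\cong H_n({\mf u}^-,L(\la))$ as $\mf l$-modules and $H^n(\ov{\mf u}^+,\ov{L}(\la^\natural))\cong H_n(\ov{\mf u}^-,\ov{L}(\la^\natural))$ as $\ov{\mf l}$-modules.

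Finally, I would apply $T$ to the displayed isomorphism and splice in Corollary~\ref{cor:Kostant}(1) together with the $\G$-side reformulation:
\[
T\big(H^n(\wt{\mf u}^+,\wt{L}(\la^\theta))\big)\cong T\big(H_n(\wt{\mf u}^-,\wt{L}(\la^\theta))\big)\cong H_n({\mf u}^-,L(\la))\cong H^n({\mf u}^+,L(\la)),
\]
all as $\mf l$-modules, which is (1); part (2) is obtained in the same way with $\ov{T}$ and Corollary~\ref{cor:Kostant}(2). Since $T$ and $\ov{T}$ are functors they carry the $\wt{\mf l}$-module isomorphisms to isomorphisms, and the targets are $\mf l$- and $\ov{\mf l}$-modules by Corollary~\ref{cor:Kostant}, so no separate compatibility verification is required. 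The only point demanding genuine care is thus the self-duality of the irreducibles together with the fact---already asserted---that the Poincar\'e isomorphism is equivariant over the relevant Levi subalgebra; given these, the corollary is a formal consequence of the homology version.
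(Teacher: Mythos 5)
Your proposal is correct and takes essentially the same route as the paper's own proof, which likewise deduces the corollary from the self-duality of $\wt{L}(\la^\theta)$, $\ov{L}(\la^\natural)$, and $L(\la)$ under $\vee$, the duality \eqref{Poincare duality}, and Corollary~\ref{cor:Kostant}. The only difference is that you spell out the justification of self-duality and the chain of isomorphisms, which the paper leaves implicit.
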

\begin{proof}
Note that $\wt{L}(\la^\theta)$, $\ov{L}(\la^\natural)$, and $L(\la)$
are self-dual with respect to the functor $\vee$. Then the
isomorphisms follow from \eqref{Poincare duality} and Corollary
\ref{cor:Kostant}.
\end{proof}

Based on \cite{KL,V}, we define the (parabolic)
Kazhdan-Lusztig-Vogan polynomials in  $\mc O$, $\ov{\mc O}$, and
$\wt{\mc{O}}$ for $\la, \mu\in P^+$ by
\begin{equation*}
\begin{split}
\ell_{\mu \la}(q)
&:=\sum_{n=0}^\infty (-q)^{-n}\dim{\rm Hom}_{\mf l}\left( L(\mf{l},\mu),
H_n(\mf{u}^-, L(\la)) \right),
 \\
\ov{\ell}_{\mu^\natural \la^\natural}(q)
&:=\sum_{n=0}^\infty (-q)^{-n} \dim{\rm Hom}_{\ov{\mf l}}
\left( \ov{L}(\ov{\mf{l}},\mu^\natural), H_n(\ov{\mf{u}}^-, \ov{L}(\la^\natural)) \right),
 \\
\wt{\ell}_{\mu^\theta \la^\theta}(q)
&:=\sum_{n=0}^\infty (-q)^{-n}\dim{\rm Hom}_{\wt{\mf l}}
\left( \wt{L}(\wt{\mf{l}},\mu^\theta), H_n(\wt{\mf{u}}^-, \wt{L}(\la^\theta)) \right).
\end{split}
\end{equation*}
By standard arguments, we see that ${\rm ch}\,
L(\lambda)=\sum_{\mu\in P^+}\ell_{\mu\la}(1){\rm ch}\, \Delta(\mu)$.
By Theorem \ref{thm:Kostant}, we have the following.
\begin{thm}
For $\la, \mu\in P^+$, we have $\ell_{\mu
\la}(q)=\ov{\ell}_{\mu^\natural
\la^\natural}(q)=\wt{\ell}_{\mu^\theta \la^\theta}(q)$.
\end{thm}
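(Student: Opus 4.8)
The plan is to read each of the three polynomials as a generating function, in the homological degree $n$, for the multiplicity of an irreducible Levi module inside a Kostant homology group, and then to transport these multiplicities across the super duality functors $T$ and $\ov{T}$. By definition, up to the weighting $(-q)^{-n}$, the quantities $\ell_{\mu\la}(q)$, $\ov{\ell}_{\mu^\natural\la^\natural}(q)$, and $\wt{\ell}_{\mu^\theta\la^\theta}(q)$ are the graded dimensions of ${\rm Hom}_{\mf{l}}(L(\mf{l},\mu),H_n(\mf{u}^-,L(\la)))$, ${\rm Hom}_{\ov{\mf{l}}}(L(\ov{\mf{l}},\mu^\natural),H_n(\ov{\mf{u}}^-,\ov{L}(\la^\natural)))$, and ${\rm Hom}_{\wt{\mf{l}}}(L(\wt{\mf{l}},\mu^\theta),H_n(\wt{\mf{u}}^-,\wt{L}(\la^\theta)))$. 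It therefore suffices to establish, for every fixed $n$, the two equalities of Hom-space dimensions relating the $\DG$-side to the $\G$- and $\SG$-sides, and then sum over $n$.

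First I would invoke \corref{cor:Kostant}, which identifies $T(H_n(\wt{\mf{u}}^-,\wt{L}(\la^\theta)))$ with $H_n(\mf{u}^-,L(\la))$ as $\mf{l}$-modules, and $\ov{T}(H_n(\wt{\mf{u}}^-,\wt{L}(\la^\theta)))$ with $H_n(\ov{\mf{u}}^-,\ov{L}(\la^\natural))$ as $\ov{\mf{l}}$-modules; thus the homology appearing on the $\G$- and $\SG$-sides is literally the image under $T$ and $\ov{T}$ of the homology on the $\DG$-side. Next I would use \lemref{lem:chL}, which gives $T(L(\wt{\mf{l}},\mu^\theta))=L(\mf{l},\mu)$ and $\ov{T}(L(\wt{\mf{l}},\mu^\theta))=L(\ov{\mf{l}},\mu^\natural)$, so that the three families of irreducible Levi modules indexing the Hom-spaces correspond bijectively through the parametrizations $\theta$, $\natural$, and the identity on $P^+$.

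The heart of the matter is to verify that $T$ and $\ov{T}$ preserve the isotypic multiplicities of these Levi modules. Since $H_n(\wt{\mf{u}}^-,\wt{L}(\la^\theta))$ is a semisimple $\wt{\mf{l}}$-module (as recorded just before \thmref{thm:Kostant}) all of whose constituents are of the form $L(\wt{\mf{l}},\nu^\theta)$ with $\nu\in P^+$, I would write it as $\bigoplus_{\nu\in P^+}L(\wt{\mf{l}},\nu^\theta)^{\oplus c_{n,\nu}}$. The functor $T$ commutes with direct sums, being defined as a sum of weight spaces, and by \lemref{lem:chL} it carries each summand $L(\wt{\mf{l}},\nu^\theta)$ to the single irreducible $L(\mf{l},\nu)$; since $\nu\mapsto\nu^\theta$ and $\nu\mapsto L(\mf{l},\nu)$ are injective, distinct summands neither collapse together nor contribute spurious copies of $L(\mf{l},\mu)$. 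Hence $\dim{\rm Hom}_{\mf{l}}(L(\mf{l},\mu),T(H_n))=c_{n,\mu}=\dim{\rm Hom}_{\wt{\mf{l}}}(L(\wt{\mf{l}},\mu^\theta),H_n)$, and the identical reasoning with $\ov{T}$ and $\natural$ gives the matching equality on the $\SG$-side. Multiplying by $(-q)^{-n}$ and summing over $n$ then yields $\ell_{\mu\la}(q)=\wt{\ell}_{\mu^\theta\la^\theta}(q)$ and $\ov{\ell}_{\mu^\natural\la^\natural}(q)=\wt{\ell}_{\mu^\theta\la^\theta}(q)$, which is the assertion.

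The step I expect to be the main obstacle is exactly the multiplicity preservation used above: one must be certain that every irreducible $\wt{\mf{l}}$-constituent occurring in $H_n(\wt{\mf{u}}^-,\wt{L}(\la^\theta))$ is genuinely of $\theta$-type, so that the truncation functors act as honest bijections on isotypic components instead of merging or silently discarding multiplicity. I would justify this by observing that the whole chain complex $\Lambda(\wt{\mf{u}}^-)\otimes\wt{L}(\la^\theta)$ is semisimple over $\wt{\mf{l}}$ with Levi highest weights in $\wt{P}^+$, and that $T$ and $\ov{T}$ are exact with $T[\wt{d}]=d$ and $\ov{T}[\wt{d}]=\ov{d}$, as used in the proof of \thmref{thm:Kostant} via \propref{prop:T is exact}. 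Once this structural point is secured, the remainder is purely a matter of bookkeeping with the bijections $\theta$ and $\natural$.
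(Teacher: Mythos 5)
Your proposal is correct and takes essentially the same approach as the paper: the paper deduces this theorem immediately from \thmref{thm:Kostant} (specialized as \corref{cor:Kostant}) together with \lemref{lem:chL}, which is exactly the route you follow. The multiplicity bookkeeping you spell out---semisimplicity of $H_n(\wt{\mf u}^-,\wt{L}(\la^\theta))$ over $\wt{\mf l}$ with all constituents of $\theta$-type, and the injectivity of $\nu\mapsto\nu^\theta$, $\nu\mapsto\nu^\natural$---is precisely what the paper leaves implicit in its one-line derivation.
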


Now, we are ready to state one main result in this paper, which
naturally extends the results in \cite{CL} and \cite{CLW}.

\begin{thm}\label{thm:super duality}\mbox{}
The following statements hold.
\begin{itemize}
\item[(1)]
$T : \wt{\mc O}\longrightarrow {\mc O}$ is an equivalence of categories.

\item[(2)]
$\ov{T} : \wt{\mc O}\longrightarrow \ov{\mc O}$ is an equivalence of categories.

\item[(3)] The categories ${\mc O}$ and $\ov{\mc O}$ are equivalent.
\end{itemize}
\end{thm}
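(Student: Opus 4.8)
The plan is to deduce (3) formally from (1) and (2): once $T:\wt{\mc O}\to\mc O$ and $\ov T:\wt{\mc O}\to\ov{\mc O}$ are equivalences, the composite $\ov T\circ T^{-1}$ is an equivalence $\mc O\xrightarrow{\sim}\ov{\mc O}$ (equivalently $\mc O\simeq\wt{\mc O}\simeq\ov{\mc O}$). So the real work is in (1), with (2) proved by the identical argument. Following the strategy of \cite{CL,CLW}, I would use the data already assembled: $T$ is exact (\propref{prop:T is exact}); it carries parabolic Verma and irreducible modules to their counterparts, $T(\wt{\Delta}(\la^\theta))=\Delta(\la)$ and $T(\wt{L}(\la^\theta))=L(\la)$ (\propref{prop:Verma to Verma}); and the bijection $\la\mapsto\la^\theta$ of $P^+$ onto $\wt{P}^+$ makes $T$ bijective on isomorphism classes of simple objects. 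The remaining point is to promote the matching of Kostant (co)homology to a matching of all extension groups, whence full faithfulness and essential surjectivity.

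The homological input is the standard identification, in a parabolic BGG category, of extensions between a standard object and a simple object with Kostant cohomology:
\[
\dim{\rm Ext}^n_{\mc O}(\Delta(\mu),L(\la))=\dim{\rm Hom}_{\mf l}\bigl(L(\mf l,\mu),H^n(\mf u^+,L(\la))\bigr),
\]
and the same identity in $\wt{\mc O}$ (this is precisely the quantity entering the definition of the Kazhdan--Lusztig--Vogan polynomials above, via \eqnref{Poincare duality}). Since $T$ matches the Levi modules, $T(L(\wt{\mf l},\mu^\theta))=L(\mf l,\mu)$ by \lemref{lem:chL}, and matches the cohomology by \corref{cor:Kostant-2}(1), the two right-hand sides agree. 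I would conclude that the comparison map induced by $T$ is an isomorphism
\[
{\rm Ext}^n_{\wt{\mc O}}(\wt{\Delta}(\mu^\theta),\wt{L}(\la^\theta))\xrightarrow{\ \sim\ }{\rm Ext}^n_{\mc O}(\Delta(\mu),L(\la))
\]
for all $n\ge 0$ and all $\mu,\la\in P^+$.

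From here the equivalence follows by dévissage and a limit. First, $T$ is faithful: being exact and sending every simple to a nonzero simple, it annihilates no nonzero object and hence no nonzero morphism. To establish full faithfulness I would descend to the finite-rank truncations via the functors $\mf{tr}^k_n$ and the compatibility \eqnref{truncation} (with which $T$ commutes, both being defined by selecting weight spaces): on $\mc O_n$ and $\wt{\mc O}_n$ the weight poset $P^+_n$ is interval-finite and the categories are lower-finite highest weight categories with standard-filtered projective covers. Resolving by such projectives and feeding the Ext-isomorphisms of the previous paragraph through the resulting long exact sequences shows that $T$ induces isomorphisms on all Hom spaces, so $T:\wt{\mc O}_n\to\mc O_n$ is fully faithful. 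Being in addition exact, bijective on simples, and standards-preserving, it then matches the (pro)generating projective covers and is an equivalence $\wt{\mc O}_n\xrightarrow{\sim}\mc O_n$. Passing to the limit over $n$ yields (1); the identical argument with $\ov T$, $\ov{\mf u}^+$, $\la^\natural$ and \corref{cor:Kostant-2}(2) yields (2); and (3) follows.

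The main obstacle I anticipate is the passage through finite rank and back: objects of $\wt{\mc O}$ and $\mc O$ need not have finite length, so the clean homological criterion applies verbatim only after truncation, and the delicate part is to verify that the finite-rank equivalences are genuinely compatible with the truncation functors and that full faithfulness and essential surjectivity both survive the inverse limit. A secondary point requiring care is the parabolic, infinite-rank form of the identity ${\rm Ext}^n\cong$ Kostant cohomology in our generality, where the Levi $\mf l$ is a symmetrizable anisotropic Kac--Moody superalgebra rather than a finite-dimensional reductive algebra.
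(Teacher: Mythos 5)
There is a genuine gap, and it is fatal to the proposed route: you reduce the problem to finite-rank equivalences $\wt{\mc O}_n\xrightarrow{\sim}\mc O_n$ and then ``pass to the limit over $n$,'' but no such finite-rank equivalences exist. Super duality is intrinsically an infinite-rank phenomenon. Concretely, the simple objects of $\mc O_n$, $\ov{\mc O}_n$, and $\wt{\mc O}_n$ are indexed by $P^+_n$, $\ov{P}^+_n$, and $\wt{P}^+_n$ respectively, and the bijections $\theta$ and $\natural$ do \emph{not} restrict to bijections between these truncated sets: $\la\in P^+_n$ means $\ell({}^+\la)\le n$ (at most $n$ rows), $\la^\natural\in\ov{P}^+_n$ means ${}^+\la_1\le n+1$ (at most $n+1$ columns), and $\la^\theta\in\wt{P}^+_n$ is a hook-type condition on the Frobenius coordinates; these are three different sets of partitions. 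Accordingly $T$ does not commute with the truncation functors $\mf{tr}^k_n$ in the sense you need (compare \eqnref{truncation}, where truncation kills different families of simples in the three categories), and a functor between $\wt{\mc O}_n$ and $\mc O_n$ that is exact and ``bijective on simples'' cannot exist. This is precisely why the paper proves the equivalence only at infinite rank and then extracts finite-rank character information \emph{afterwards} by truncation (\corref{cor:KL}), rather than the other way around.

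The paper's actual argument stays at infinite rank throughout: after a normalization of $\Z_2$-gradings on $\wt{\h}^*$ (needed so that $\wt{\mc O}$, $\ov{\mc O}$, $\mc O$ are genuinely abelian, a point your proposal skips), it uses \corref{cor:Kostant} and \corref{cor:Kostant-2} to establish, following \cite[Section 6.5]{CW}, that (i) $T$ induces isomorphisms ${\rm Hom}_{\wt{\mc O}}(\wt{M},\wt{N})\cong{\rm Hom}_{\mc O}(M,N)$ for \emph{all} pairs, and (ii) ${\rm Ext}^1_{\wt{\mc O}}(\wt{M},\wt{N})\cong{\rm Ext}^1_{\mc O}(M,N)$ when $\wt{M}$ is a highest weight module. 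Essential surjectivity is then proved by writing an arbitrary $M\in\mc O$ as a union of a (possibly infinite) filtration $0=M_0\subset M_1\subset\cdots$ with highest weight subquotients, lifting each highest weight $\G$-module to a highest weight $\DG$-module with $\theta$-twisted highest weight, and splicing the lifts together inductively using the ${\rm Ext}^1$-isomorphism (ii); fullness and faithfulness come from (i). Your first two paragraphs (exactness, matching of standards and simples, matching of Kostant cohomology, and faithfulness) are consistent with this, but the dévissage through projective resolutions in ``lower-finite highest weight categories'' at finite rank cannot be repaired, because the categories it is supposed to compare are inequivalent; the argument must instead handle infinite highest weight filtrations directly at infinite rank, as the paper does.
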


\begin{proof}
Let us define an equivalence relation $\sim$ on $\wt{\h}^*$ by
letting $\mu\sim \nu$ if and only if $\mu-\nu\in\sum_{\alpha\in
\wt{\Pi}}\Z\alpha$. For $\mu\in \wt{\h}^*$, we fix a representative
$[\mu]^o$ in the equivalence class $[\mu]$ and declare  the parity
of $[\mu]^o$ is $\ov{0}$. Consider a $\Z_2$-grading on $\wt{\h}^*$
as follows:
\begin{equation}\label{Grading on Cartan}
\wt{\h}^*_{\varepsilon}=\{\,\mu\in \wt{\h}^* \,\vert\
p\left(\mu-[\mu]^o\right) \equiv \varepsilon \!\!\!\pmod 2\,\} \ \ \
\ (\varepsilon\in\Z_2).
\end{equation}
Let $\wt{\mc O}^{\ov{0}}$ be the full subcategory of $\wt{\mc O}$
consisting of $\DG$-modules equipped with the $\Z_2$-grading in
\eqref{Grading on Cartan}. The subcategories $\ov{\mc O}^{\ov{0}}$
and ${\mc O}^{\ov{0}}$ are defined similarly. Since $T : \wt{\mc
O}^{\ov{0}}\longrightarrow {\mc O}^{\ov 0}$, $\ov{T} : \wt{\mc
O}^{\ov{0}}\longrightarrow \ov{\mc O}^{\ov{0}}$, and $\wt{\mc
O}^{\ov{0}}$ is equivalent to $\wt{\mc O}$, we may assume without
loss of generality that the modules in $\wt{\mc O}$, $\ov{\mc O}$,
and $\mc O$ are equipped with the $\Z_2$-grading in \eqref{Grading
on Cartan}. In particular, all categories involved are abelian.

Now, with Corollaries \ref{cor:Kostant} and \ref{cor:Kostant-2} at
our disposal, we may apply the same arguments as in \cite[Section
6.5]{CW} (cf. \cite{CL,CLW}).

First, we can prove that for $\wt{M}, \wt{N}\in \wt{\mc O}$, $T$ induces isomorphisms
\begin{itemize}
\item[(i)]
${\rm Hom}_{\wt{\mc O}}(\wt{M},\wt{N})\cong {\rm Hom}_{{\mc O}}({M},{N})$,

\item[(ii)]
${\rm Ext}^1_{\wt{\mc O}}(\wt{M},\wt{N})\cong {\rm Ext}^1_{{\mc
O}}({M},{N})$ with $\wt{M}$ a highest weight module,
\end{itemize}
where $M=T(\wt{M})$ and $N=T(\wt{N})$. Next, let $M\in {\mc O}$ be
given. By standard arguments, we have a filtration $0=M_0\subset
M_1\subset M_2\subset \cdots$ such that $M=\bigcup_{i\geq 1}M_i$ and
$M_i/M_{i-1}$ is a highest weight $\G$-module. We can also prove
that for a highest weight $\G$-module $V$ with highest weight $\la$,
there exists a highest weight $\DG$-module $\wt{V}$ with highest
weight $\la^\theta$ such that $T(\wt{V})\cong V$. This, together
with (ii), implies that there exists a filtration of $\DG$-modules
in $\wt{\mc O}$,  $0=\wt{M}_0\subset \wt{M}_1\subset \wt{M}_2\subset
\cdots$ such that $T(\wt{M}_i)\cong M_i$, and hence $T(\wt{M})\cong
M$, where $\wt{M}=\bigcup_{i\geq 1}\wt{M}_i$. Finally, by (i), we
conclude that $T : \wt{\mc O}\longrightarrow {\mc O}$ is an
equivalence of categories, which proves (1). The proof of (2) is
parallel. Now, the equivalence between $\ov{\mc O}$ and ${\mc O}$
follows from (1) and (2).
\end{proof}

The equivalence in Theorem \ref{thm:super duality} (3) is called
{\it super duality}. It was conjectured in \cite{CWZ,CW1} and proved
in \cite{CL} for \framebox{${\mf B}$} of type $A_n$, and then proved
in \cite{CLW} for \framebox{${\mf B}$} of type $B_n$, $C_n$, $D_n$,
and $B(0,n)$.
Theorem \ref{thm:super duality} holds also when \framebox{${\mf B}$} is of infinite rank.

\begin{rem}\label{more than one tail}
We may also modify the setup at the beginning of Section \ref{Three
Lie superalgebras} to have equivalences for a more general class of
$\DG$, where more than one odd isotropic simple root, say
$\alpha^{(1)}_{-1},\ldots,\alpha^{(u)}_{-1}$, together with its
associated tail diagram of $\gl(\infty|\infty)$, is attached to
\framebox{${\mf B}$} as in \eqref{DG diagram} (cf.~\cite{CLW2}). Here, a pair of
vertices $\alpha^{(i)}_{-1}$ and $\alpha^{(j)}_{-1}$ may have an
edge with label $(d_{ij},d_{ji})$ for $1\leq i<j\leq u$ (not
necessarily being negative), and a vertex $\gamma$ in
\framebox{$\mf{B}$} may be connected to more than one vertex from
$\alpha^{(1)}_{-1},\ldots,\alpha^{(u)}_{-1}$.

For example, when $u=2$, we have equivalences between $\wt{\mc O}$,
$\ov{\mc O}$, and $\mc O$ for
\begin{center}
\hskip -2cm \setlength{\unitlength}{0.17in}
\begin{picture}(20,7)
\put(10.25,1.5){\makebox(0,0)[c]{\Large $\otimes$}}
\put(12.4,1.5){\makebox(0,0)[c]{\Large $\otimes$}}
\put(19,1.5){\makebox(0,0)[c]{$\cdots$}}
\put(10.6,.5){\makebox(0,0)[c]{\tiny $\alpha^{(2)}_{-1}$}}
\qbezier(9.95,1.8)(9.4,2.4)(8,2.5)
\qbezier(9.95,1.2)(9.4,0.6)(8,0.5)
\qbezier(10.5,1.8)(11.2,3)(10.5,4.25)
\put(12.2,3){\makebox(0,0)[c]{\tiny$_{(d_{12},d_{21})}$}}
\put(10.6,1.5){\line(1,0){1.45}}
\put(12.8,1.5){\line(1,0){1.3}}
\put(14.9,1.5){\line(1,0){1.3}}
\put(17,1.5){\line(1,0){1}}
\put(8.8,2.1){\tiny{$_{\vdots}$}}
%
\put(14.5,1.5){\makebox(0,0)[c]{\Large $\otimes$}}
\put(16.6,1.5){\makebox(0,0)[c]{\Large $\otimes$}}
%
%
\put(10.25,4.5){\makebox(0,0)[c]{\Large $\otimes$}}
\put(12.4,4.5){\makebox(0,0)[c]{\Large $\otimes$}}
\put(19,4.5){\makebox(0,0)[c]{$\cdots$}}
\put(10.6,5.5){\makebox(0,0)[c]{\tiny $\alpha^{(1)}_{-1}$}}
\qbezier(9.95,4.8)(9.4,5.4)(8,5.5)
\qbezier(9.95,4.2)(9.4,3.6)(8,3.5)
\put(10.6,4.5){\line(1,0){1.45}}
\put(12.8,4.5){\line(1,0){1.3}}
\put(14.9,4.5){\line(1,0){1.3}}
\put(17,4.5){\line(1,0){1}}
\put(8.8,5.1){\tiny{$_{\vdots}$}}
%
\put(14.5,4.5){\makebox(0,0)[c]{\Large $\otimes$}}
\put(16.6,4.5){\makebox(0,0)[c]{\Large $\otimes$}}
\put(8,0){\line(0,1){6}}
\put(4,0){\line(0,1){6}}
\put(8,6){\line(-1,0){4}}
\put(8,0){\line(-1,0){4}}
\put(6,3){\makebox(0,0)[c]{{\LARGE$\mf{B}$}}}
\put(2,3){\makebox(0,0)[c]{{\Large$\DG\ :$}}}
\end{picture}
\end{center}

\begin{center}
\hskip -2cm \setlength{\unitlength}{0.17in}
\begin{picture}(20,7)
\put(10.25,1.5){\makebox(0,0)[c]{\Large $\otimes$}}
\put(12.4,1.5){\makebox(0,0)[c]{$\bigcirc$}}
\put(19,1.5){\makebox(0,0)[c]{$\cdots$}}
\put(10.6,.5){\makebox(0,0)[c]{\tiny $\alpha^{(2)}_{-1}$}}
\qbezier(9.95,1.8)(9.4,2.4)(8,2.5)
\qbezier(9.95,1.2)(9.4,0.6)(8,0.5)
\qbezier(10.5,1.8)(11.2,3)(10.5,4.25)
\put(12.2,3){\makebox(0,0)[c]{\tiny$_{(d_{12},d_{21})}$}}
\put(10.6,1.5){\line(1,0){1.45}}
\put(12.8,1.5){\line(1,0){1.3}}
\put(14.9,1.5){\line(1,0){1.3}}
\put(17,1.5){\line(1,0){1}}
\put(8.8,2.1){\tiny{$_{\vdots}$}}
%
\put(14.5,1.5){\makebox(0,0)[c]{$\bigcirc$}}
\put(16.6,1.5){\makebox(0,0)[c]{$\bigcirc$}}
%
%
\put(10.25,4.5){\makebox(0,0)[c]{\Large $\otimes$}}
\put(12.4,4.5){\makebox(0,0)[c]{$\bigcirc$}}
\put(19,4.5){\makebox(0,0)[c]{$\cdots$}}
\put(10.6,5.5){\makebox(0,0)[c]{\tiny $\alpha^{(1)}_{-1}$}}
\qbezier(9.95,4.8)(9.4,5.4)(8,5.5)
\qbezier(9.95,4.2)(9.4,3.6)(8,3.5)
\put(10.6,4.5){\line(1,0){1.45}}
\put(12.8,4.5){\line(1,0){1.3}}
\put(14.9,4.5){\line(1,0){1.3}}
\put(17,4.5){\line(1,0){1}}
\put(8.8,5.1){\tiny{$_{\vdots}$}}
%
\put(14.5,4.5){\makebox(0,0)[c]{$\bigcirc$}}
\put(16.6,4.5){\makebox(0,0)[c]{$\bigcirc$}}
\put(8,0){\line(0,1){6}}
\put(4,0){\line(0,1){6}}
\put(8,6){\line(-1,0){4}}
\put(8,0){\line(-1,0){4}}
\put(6,3){\makebox(0,0)[c]{{\LARGE$\mf{B}$}}}
\put(2,3){\makebox(0,0)[c]{{\Large$\SG\ :$}}}
\end{picture}
\end{center}

\begin{center}
\hskip -2cm \setlength{\unitlength}{0.17in}
\begin{picture}(20,7)
\put(10.25,1.5){\makebox(0,0)[c]{$\bigcirc$}}
\put(12.4,1.5){\makebox(0,0)[c]{$\bigcirc$}}
\put(19,1.5){\makebox(0,0)[c]{$\cdots$}}
\put(10.6,.5){\makebox(0,0)[c]{\tiny $\beta^{(2)}_{-1}$}}
\qbezier(10,1.8)(9.4,2.4)(8,2.5)
\qbezier(10,1.2)(9.4,0.6)(8,0.5)
\qbezier(10.5,1.8)(11.2,3)(10.5,4.2)
\put(12.2,3){\makebox(0,0)[c]{\tiny$_{(d_{12},d_{21})}$}}
\put(10.6,1.5){\line(1,0){1.45}}
\put(12.8,1.5){\line(1,0){1.3}}
\put(14.9,1.5){\line(1,0){1.3}}
\put(17,1.5){\line(1,0){1}}
\put(8.8,2.1){\tiny{$_{\vdots}$}}
%
\put(14.5,1.5){\makebox(0,0)[c]{$\bigcirc$}}
\put(16.6,1.5){\makebox(0,0)[c]{$\bigcirc$}}
%
%
\put(10.25,4.5){\makebox(0,0)[c]{$\bigcirc$}}
\put(12.4,4.5){\makebox(0,0)[c]{$\bigcirc$}}
\put(19,4.5){\makebox(0,0)[c]{$\cdots$}}
\put(10.6,5.5){\makebox(0,0)[c]{\tiny $\beta^{(1)}_{-1}$}}
\qbezier(10,4.8)(9.4,5.4)(8,5.5)
\qbezier(10,4.2)(9.4,3.6)(8,3.5)
\put(10.6,4.5){\line(1,0){1.45}}
\put(12.8,4.5){\line(1,0){1.3}}
\put(14.9,4.5){\line(1,0){1.3}}
\put(17,4.5){\line(1,0){1}}
\put(8.8,5.1){\tiny{$_{\vdots}$}}
%
\put(14.5,4.5){\makebox(0,0)[c]{$\bigcirc$}}
\put(16.6,4.5){\makebox(0,0)[c]{$\bigcirc$}}
\put(8,0){\line(0,1){6}}
\put(4,0){\line(0,1){6}}
\put(8,6){\line(-1,0){4}}
\put(8,0){\line(-1,0){4}}
\put(6,3){\makebox(0,0)[c]{{\LARGE$\mf{B}$}}}
\put(2,3){\makebox(0,0)[c]{{\Large$\G\ :$}}}
\end{picture}
\end{center}\vskip 5mm\end{rem}

\begin{example}\label{ex:affine super}
Suppose that \framebox{$\mf{B}$} is a disjoint union of the
following two Dynkin diagrams of type $A$:
\begin{center}
\hskip -4cm \setlength{\unitlength}{0.25in}
\begin{picture}(24,2)
\put(8.2,1){\makebox(0,0)[c]{$\bigcirc$}}
\put(10.45,1){\makebox(0,0)[c]{$\cdots$}}
\put(12.7,1){\makebox(0,0)[c]{$\bigcirc$}}
\put(16.2,1){\makebox(0,0)[c]{$\bigcirc$}}
\put(18.4,1){\makebox(0,0)[c]{$\cdots$}}
\put(20.6,1){\makebox(0,0)[c]{$\bigcirc$}}
\put(8.5,1){\line(1,0){1.4}}
\put(10.9,1){\line(1,0){1.5}}
\put(8.5,1.5){\tiny{$_{(-1,-1)}$}}
\put(11,1.5){\tiny{$_{(-1,-1)}$}}
\put(16.5,1.5){\tiny{$_{(-1,-1)}$}}
\put(18.9,1.5){\tiny{$_{(-1,-1)}$}}
\put(16.45,1){\line(1,0){1.4}}
\put(18.9,1){\line(1,0){1.4}}
\put(8.2,0.3){\makebox(0,0)[c]{\tiny $\gamma^{(1)}_{1}$}}
\put(20.6,0.3){\makebox(0,0)[c]{\tiny $\gamma^{(1)}_2$}}
\put(12.8,0.3){\makebox(0,0)[c]{\tiny $\gamma^{(2)}_{2}$}}
\put(16.4,0.3){\makebox(0,0)[c]{\tiny $\gamma^{(2)}_1$}}
\end{picture}
\end{center}
Here $\gamma^{(1)}_1$, $\gamma^{(1)}_2$, $\gamma^{(2)}_1$, and
$\gamma^{(2)}_2$  are the four end vertices. We connect
\framebox{$\mf{B}$} to two vertices $\alpha^{(1)}_{-1}$ and
$\alpha^{(2)}_{-1}$ as follows. We connect to $\alpha^{(i)}_{-1}$
the two vertices $\gamma^{(i)}_1$ and $\gamma^{(i)}_2$ with labels
$(b^{(i)}_1,c^{(i)}_1)=(-1,1)$ and $(b^{(i)}_2,c^{(i)}_2)=(-1,-1)$
so that the resulting head diagram is of the form:
\begin{center}
\hskip -4cm \setlength{\unitlength}{0.25in}
\begin{picture}(24,4)
\put(7.95,1){\makebox(0,0)[c]{$\bigcirc$}}
\put(10.2,1){\makebox(0,0)[c]{$\cdots$}}
\put(12.4,1){\makebox(0,0)[c]{$\bigcirc$}}
\put(16.3,1){\makebox(0,0)[c]{$\bigcirc$}}
\put(18.45,1){\makebox(0,0)[c]{$\cdots$}}
\qbezier(8.1,1.25)(10.4,3.4)(14.2,3.5)
\qbezier(14.8,3.5)(18.45,3.4)(20.45,1.25)
\put(8.2,1){\line(1,0){1.5}}
\put(10.6,1){\line(1,0){1.5}}
\put(8.3,1.3){\tiny{$_{(-1,-1)}$}}
\put(10.7,1.3){\tiny{$_{(-1,-1)}$}}
\put(12.7,1){\line(1,0){1.5}}
\put(12.7,1.3){\tiny{$_{(-1,-1)}$}}
\put(14.8,1){\line(1,0){1.2}}
\put(14.9,1.3){\tiny{$_{(1,-1)}$}}
\put(16.6,1.3){\tiny{$_{(-1,-1)}$}}
\put(18.8,1.3){\tiny{$_{(-1,-1)}$}}
\put(17,3.5){\tiny{$_{(-1,-1)}$}}
\put(10.6,3.5){\tiny{$_{(-1,1)}$}}
\put(16.55,1){\line(1,0){1.4}}
\put(18.9,1){\line(1,0){1.4}}
\put(14.5,1){\makebox(0,0)[c]{$\bigotimes$}}
\put(14.5,3.5){\makebox(0,0)[c]{$\bigotimes$}}
\put(20.6,1){\makebox(0,0)[c]{$\bigcirc$}}
\put(14.5,2.8){\makebox(0,0)[c]{\tiny $\alpha^{(1)}_{-1}$}}
\put(14.5,0.3){\makebox(0,0)[c]{\tiny $\alpha^{(2)}_{-1}$}}
\put(8,0.3){\makebox(0,0)[c]{\tiny $\gamma_{1}^{(1)}$}}
\put(20.6,0.3){\makebox(0,0)[c]{\tiny $\gamma_2^{(1)}$}}
\put(12.5,0.3){\makebox(0,0)[c]{\tiny $\gamma_{2}^{(2)}$}}
\put(16.4,0.3){\makebox(0,0)[c]{\tiny $\gamma_1^{(2)}$}}
\end{picture}
\end{center}
This gives the Dynkin diagram of the affine Lie superalgebra of type $A$.
\end{example}

\section{Integrable modules for Kac-Moody Lie superalgebras}
 \label{sec:integrable}

In this section, we assume that  $\G(C)$ is a
Kac-Moody Lie superalgebra associated with  $C=(c_{ij})_{i,j\in I}$ satisfying the mild condition
\begin{equation}
\label{eq:neq0}\tag{{\bf C}}
\text{$C$ is a symmetrizable  SGCM with $c_{ij}\leq 0$\ \  for all $i\neq j$.}
\end{equation}
Recall $I =I_{\ov{0}} \sqcup I_{\ov{1}} $. For convenience, we set %
\begin{align*}
\Icirc=\{\,i\in I_{\ov{0}} \mid c_{ii}=2\,\}, \quad
&
I_{\otimes}=\{\,i\in I\,|\,c_{ii}=0 \,\},
 \\
  \Ibullet =\{\,i\in I_{\ov{1}} \mid c_{ii}=2\,\},
 \quad
& I_{\text{\RIGHTcircle}}=\Icirc \sqcup\Ibullet =\{\,i\in I \mid c_{ii}=2\,\}.
\end{align*}
We define ${\mc G}:=\G(C)\oplus \bigoplus_{i\in I_{\otimes}}\mathbb{C}d^{(i)}$,  an extension of $\G(C)$ by outer derivations, where $d^{(i)}$ is defined in the same way as $d$ corresponding to $\alpha_{-1}$ for $\DG$ in Section \ref{Three Lie superalgebras}.
Let $\Pi_{\mc G}$ be the
fundamental system associated with $C$, and let ${\mc O}_{\mc G}$ be
the BGG category with respect to $\Pi_{\mc G}$. Then we have the
following generalization of Corollary \ref{cor:exceptional types}.

\begin{thm}\label{thm:super duality-2}
The characters of irreducible $\mc G$-modules in ${\mc O}_{\mc G}$
are determined by those of irreducible modules over a symmetrizable
anisotropic Kac-Moody Lie superalgebra.
\end{thm}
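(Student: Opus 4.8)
The plan is to recognize $\mc G$ as the rank $-1$ member $\SG_{-1}$ of a super duality datum and then invoke the character reduction already established. First I would split the index set as $I=I_{\text{\RIGHTcircle}}\sqcup I_{\otimes}$ into the non-isotropic simple roots (those with $c_{ii}=2$) and the isotropic odd simple roots (those with $c_{ii}=0$), and set $B=(c_{ij})_{i,j\in I_{\text{\RIGHTcircle}}}$ with Dynkin diagram \framebox{$\mf B$}. Since by \eqref{eq:neq0} the matrix $C$ is a symmetrizable SGCM with non-positive off-diagonal entries and with $c_{ii}=2$ on $I_{\text{\RIGHTcircle}}$, the principal submatrix $B$ is a symmetrizable anisotropic SGCM, so the standing hypothesis \eqref{eq:locally nilpotent} holds. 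I would then attach to each isotropic odd simple root a tail of type $\gl(\infty|\infty)$ as in \eqref{DG diagram}, using the single-tail construction of Section~\ref{Three Lie superalgebras} when $|I_{\otimes}|=1$ and the multi-tail generalization of Remark~\ref{more than one tail} when $|I_{\otimes}|>1$. This produces an infinite-rank $\DG$ together with its companions $\SG$ and $\G$, in which $\mc G\cong\SG_{-1}$; the outer derivations $d^{(i)}$ used to build $\mc G$ are exactly the derivations $d$ introduced for each $\alpha^{(i)}_{-1}$ in the tail construction.

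Next I would check that the head-diagram hypothesis \eqref{eq:pseudoCartan for hd} is automatic here. For an isotropic odd root of index $a\in I_{\otimes}$ playing the role of $\alpha_{-1}$, joined to a non-isotropic root $\gamma_j$ of index $j\in I_{\text{\RIGHTcircle}}$, the head labels are $b_j=\langle\alpha_a,\gamma_j^\vee\rangle=c_{ja}$ and $c_j=\langle\gamma_j,\alpha_a^\vee\rangle=c_{aj}$. An edge is present precisely when both entries are non-zero by (C3), and (C2) applied to the non-isotropic row $j$ (where $c_{jj}=2$) forces $c_{ja}\in-\Z_+$ when $j\in I_{\ov 0}$ and $c_{ja}\in-2\Z_+$ when $j\in I_{\ov 1}$; hence \eqref{eq:pseudoCartan for hd} holds. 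Moreover $c_j=c_{aj}\le 0$ by \eqref{eq:neq0}, so by the criterion following \eqref{G diagram} the companion $\G$ is a Kac-Moody Lie superalgebra. Every diagonal entry of the matrix for \eqref{G diagram} equals $2$ (the replacements $\beta^{(i)}_{-1}$ of the isotropic roots are even non-isotropic), so $\G$ is anisotropic, and by the argument of Proposition~\ref{crit:KM}(2)—which only manipulates the symmetrizing diagonal matrix and is unaffected by odd non-isotropic vertices in \framebox{$\mf B$}—it is symmetrizable. Thus $\G$ is a symmetrizable anisotropic Kac-Moody Lie superalgebra, an honest Lie algebra precisely when $\Ibullet=\emptyset$.

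With this datum in place I would apply super duality. Theorem~\ref{thm:super duality}, in the multi-tail form of Remark~\ref{more than one tail}, gives the equivalence $\ov{\mc O}\simeq\mc O$, and the finite-rank specialization Corollary~\ref{cor:KL} at $n=-1$ yields, for every relevant highest weight $\la$,
\begin{equation*}
\text{ch\,}\ov{L}_{-1}(\la^\natural)=\sum_{\{\mu\in P^+\,\mid\,\mu^\natural\in\ov{P}^+_{-1}\}}m_{\mu\la}\,\text{ch\,}\ov{\Delta}_{-1}(\mu^\natural),
\end{equation*}
where the integers $m_{\mu\la}$ of \eqref{eq:charKL} are read off from the composition series of the parabolic Verma modules of $\G$. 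Taking the Levi datum $J=\emptyset$ so that $\ov{\mc O}_{-1}$ is the full BGG category $\mc O_{\mc G}$, the modules $\ov{L}_{-1}(\la^\natural)$ exhaust the irreducible $\mc G$-modules and the standard characters $\text{ch\,}\ov{\Delta}_{-1}(\mu^\natural)$ are explicit. Hence the displayed identity expresses every irreducible $\mc G$-character through the irreducible characters of $\G$, which is the asserted reduction.

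The main obstacle is the bookkeeping for several isotropic odd simple roots simultaneously: the clean single-tail super duality of Theorem~\ref{thm:super duality} must be replaced by its multi-tail extension (Remark~\ref{more than one tail}), and one must verify that the derivations $d^{(i)}$ added in forming $\mc G$ coincide with those supplied by the tail construction, so that $\mc O_{\mc G}$ and $\ov{\mc O}_{-1}$ genuinely agree. Once the correspondence $\mc G\cong\SG_{-1}$ and the hypotheses \eqref{eq:pseudoCartan for hd} and \eqref{eq:locally nilpotent} are secured, everything else is an instance of the finite-rank character reduction already proved.
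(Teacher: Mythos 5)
Your proposal is correct and follows essentially the same route as the paper: realize $\mc G$ as the truncation $\SG_{-1}$ of the multi-tail super duality datum built on $B=(c_{ij})_{i,j\in I_{\text{\RIGHTcircle}}}$ (Remark~\ref{more than one tail}), observe that \eqref{eq:neq0} makes the companion $\G$ a symmetrizable anisotropic Kac-Moody superalgebra via Proposition~\ref{crit:KM}, and reduce characters through Theorem~\ref{thm:irred:char} and the truncation functor, which is exactly what your invocation of Corollary~\ref{cor:KL} at $n=-1$ amounts to. Your explicit verification of \eqref{eq:pseudoCartan for hd}, \eqref{eq:locally nilpotent}, and the matching of the derivations $d^{(i)}$ merely spells out details the paper leaves implicit.
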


\begin{proof}
Let $B$ be the submatrix of $C$ associated with $I_{\text{\RIGHTcircle}}$. For
$i\in I_{\otimes}$, let $\alpha^{(i)}_{-1}$ denote the corresponding
odd isotropic simple root of ${\mc G}$.  We consider the Kac-Moody
Lie superalgebra $\DG$  of infinite rank, whose Dynkin diagram is
obtained from \framebox{$\mf B$} by attaching diagrams of
$\gl(\infty|\infty)$ starting from $\alpha^{(i)}_{-1}$ ($i\in
I_{\otimes}$) in such a way that the head diagram of $\DG$ (cf.
\eqref{Head diagram})  is the Dynkin diagram \framebox{$\mf C$} of
$\mc G$ (see Remark \ref{more than one tail}). We define $\SG$ and
$\G$ in a similar way. Since $c_{ij}\leq 0$ for all $i\neq j$, $\G$
is a symmetrizable anisotropic Kac-Moody Lie superalgebra by
Proposition \ref{crit:KM}.

Consider the associated equivalent categories $\wt{\mc O}$, $\ov{\mc
O}$ and $\mc O$ with $J\subseteq I_{\text{\RIGHTcircle}}$ being empty. By
construction, we see that the truncated subalgebra $\SG_{-1}$ of
$\SG$ is isomorphic to ${\mc G}$. In the case of the example in
Remark \ref{more than one tail}, we have \vskip 5mm
\begin{center}
\hskip -2cm \setlength{\unitlength}{0.17in}
\begin{picture}(13,6)
\put(10.25,1.5){\makebox(0,0)[c]{\Large $\otimes$}}
\put(10.6,.5){\makebox(0,0)[c]{\tiny $\alpha^{(2)}_{-1}$}}
\qbezier(9.95,1.8)(9.4,2.4)(8,2.5)
\qbezier(9.95,1.2)(9.4,0.6)(8,0.5)
\qbezier(10.5,1.8)(11.2,3)(10.5,4.25)
%
\put(8.8,2.1){\tiny{$_{\vdots}$}}
\put(10.25,4.5){\makebox(0,0)[c]{\Large $\otimes$}}
\put(10.6,5.5){\makebox(0,0)[c]{\tiny $\alpha^{(1)}_{-1}$}}
\qbezier(9.95,4.8)(9.4,5.4)(8,5.5)
\qbezier(9.95,4.2)(9.4,3.6)(8,3.5)%
\put(8.8,5.1){\tiny{$_{\vdots}$}}
\put(8,0){\line(0,1){6}}
\put(4,0){\line(0,1){6}}
\put(8,6){\line(-1,0){4}}
\put(8,0){\line(-1,0){4}}
\put(6,3){\makebox(0,0)[c]{{\LARGE$\mf{B}$}}}
\put(2,3){\makebox(0,0)[c]{{\Large$\SG_{-1}\ :$}}}
\put(13,3){\makebox(0,0)[c]{{$= $}}}
\put(15,3){\makebox(0,0)[c]{\framebox{\Large$\mf C $}}}
\end{picture}
\end{center}
Moreover, the truncated category $\ov{\mc O}_{-1}$ is the BGG
category ${\mc O}_{\mc G}$ of ${\mc G}$-modules with respect to
$\Pi_{\mc G}$. Therefore, by Theorem  \ref{thm:irred:char} and
\eqref{truncation}, the character of an irreducible ${\mc G}$-module
in ${\mc O}_{\mc G}$ can be obtained from that of an irreducible
$\G$-module in $\mc O$.
\end{proof}

\begin{rem}\label{rem:ch of G}
When $\Ibullet=\emptyset$ in $C$,
$\G$ becomes a Kac-Moody Lie algebra,
and the
characters for a large class of irreducible $\G$-modules are
given by the Kazhdan-Lusztig polynomials of $\G$  (see \cite{KaTa2});
and see Remark~ \ref{rem:B} for the more general case. Applying the super duality functor we
obtain irreducible character formulas for the corresponding $\mc
G$-modules in ${\mc O}_{\mc G}$.
\end{rem}

\begin{rem}
Affine Lie superalgebras are related to Lie algebras $\G$, which are
not Kac-Moody Lie algebras, since the matrices associated to $\G$ have
a positive off-diagonal entry (see Examples \ref{example:affineosp} and \ref{ex:affine super}),
and hence they cannot be treated as in Theorem \ref{thm:super
duality-2} and Remark~ \ref{rem:ch of G}.
\end{rem}

We keep the notations for the Lie (super)algebras $\DG$, $\SG$, and
$\G$  associated with $\mc G$ as in the proof of Theorem~
\ref{thm:super duality-2}.

Let
\begin{equation*}
\wt{\Pi}=\{\,\alpha_i\,|\,i\in I_{\text{\RIGHTcircle}}\,\}\cup
\{\,\alpha^{(i)}_j\,|\,i\in I_{\otimes},\, j\in T_\infty\,\}
\end{equation*}
be the fundamental system of $\DG$, where $\{\,\alpha^{(i)}_j\,|\,\,
j\in T_\infty\,\}$ is the set of simple roots of
$\gl(\infty|\infty)$ for each $i\in I_{\otimes}$. Then the set of
integral weights $\wt{P}$ of $\DG$ is given by
\begin{equation*}
\wt{P}:=\sum_{i\in I_{\otimes}}\Z \omega^{(i)}+\sum_{i\in
I_{\text{\RIGHTcircle}}}\Z \omega_i +\sum_{i\in I_{\otimes}} \sum_{j\in
T_\infty}\Z\omega^{(i)}_j,
\end{equation*}
where $\omega^{(i)}$ is the fundamental weight with respect to $d^{(i)}$, and $\omega^{(i)}_j$ is the fundamental
weight corresponding to $\alpha^{(i)}_j$. For $i\in I_{\otimes}$, we
define $\epsilon^{(i)}_j$ and $h^{(i)}_j$ for $j\in T_\infty$ as in
the case of $|I_{\otimes}|=1$ \eqref{epsilon and h}. Hence the
integral weights for $\G$ and $\SG$ are given by
\begin{align*}
&P:=\sum_{i\in I_{\otimes}} \Z \omega^{(i)}+\sum_{i\in I_{\text{\RIGHTcircle}}}\Z\,
\omega_i+\sum_{i\in I_{\otimes}}\sum_{n\in\{-1\}\cup\N}\Z\epsilon^{(i)}_n,
 \\
&\ov{P}:=\sum_{i\in I_{\otimes}}\Z \omega^{(i)}+\sum_{i\in I_{\text{\RIGHTcircle}}}\Z\,
\omega_i+\sum_{i\in I_{\otimes}}\sum_{r\in\{-1\}\cup\left(\hf+\Z_+\right)}\Z\epsilon^{(i)}_r,
\end{align*}
respectively. Now, we assume that the category $\wt{\mc{O}}$ is with
respect to the Levi subalgebra $\wt{\mf l}$ associated to
$\wt{\Pi}\setminus\{\,\alpha_{-1}^{(i)}\,|\,i\in I_{\otimes}\,\}$.
In this case, $P^+$ is the subset of $P$ consisting of
\begin{equation}\label{integral weights}
\la=\sum_{i\in I_{\otimes}}\kappa^{(i)}\omega^{(i)}+\sum_{i\in
I_{\text{\RIGHTcircle}}}\kappa_i\omega_i +\sum_{i\in
I_{\otimes}}\sum_{n\in\{-1\}\cup\N}\la^{(i)}_n\epsilon^{(i)}_n,
\end{equation}
where $\kappa_i\in\Z_+$ (respectively $\kappa_i\in 2\Z_+$) for $i\in
\Icirc$ (respectively $i\in \Ibullet$), and
$(\la_1^{(i)},\la^{(i)}_2,\ldots)\in \mc{P}$ for $i\in I_{\otimes}$.
Then $\ov{P}^+$ and $\wt{P}^+$ are given by the images of $P^+$ under
$\natural$ and $\theta$, respectively, as in the case of
$|I_{\otimes}|=1$ \eqref{natural and theta}.

Let ${\mc O}^{\rm int}$ be the full subcategory of $\mc O$
consisting of integrable $\G$-modules and let ${P}^{++}_\G$ be the
subset of $P^+$ consisting of weights of the form in \eqref{integral
weights} with $(\la_{-1}^{(i)},\la_1^{(i)},\la^{(i)}_2,\ldots)\in
\mc{P}$ for $i\in I_{\otimes}$. Note that ${\mc O}^{\rm int}$ is a
semisimple tensor category, whose irreducible objects are $L(\la)$
for $\la\in {P}^{++}_\G$ by \cite{K2}.

Let $\ov{\mc O}^{\rm int}$ be the full subcategory of $\ov{\mc O}$
consisting of $\SG$-modules $\ov{M}$ such that
\begin{equation}\label{polynomial weight}
{\rm wt}\left(\ov{M}\right)\subseteq \sum_{i\in I_{\otimes}}\Z
\omega^{(i)}+ \sum_{i\in I_{\text{\RIGHTcircle}}}\Z\,\omega_i +\sum_{i\in
I_{\otimes}}\sum_{r\in\{-1\}\cup\left(\hf+\Z_+\right)}\Z_+\epsilon^{(i)}_r.
\end{equation}

\begin{lem}\label{lem:integrable}
$\ov{\mc O}^{\rm int}$ is a semisimple tensor category equivalent to
${\mc O}^{\rm int}$ under super duality, whose irreducible objects
are $\ov{L}(\la^\natural)$ for $\la\in {P}^{++}_\G$.
\end{lem}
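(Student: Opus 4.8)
The plan is to show that the super duality equivalence of \thmref{thm:super duality} restricts to an equivalence between the two integrable subcategories; semisimplicity of $\ov{\mc O}^{\rm int}$ and the description of its simple objects will then be inherited from $\mc O^{\rm int}$. Recall from \propref{prop:Verma to Verma} and \thmref{thm:super duality} that the resulting equivalence $\ov{\mc O}\simeq\mc O$ matches the irreducibles $\ov L(\la^\natural)\leftrightarrow L(\la)$, and that $\mc O^{\rm int}$ is semisimple with simple objects $L(\la)$ for $\la\in P^{++}_\G$ by \cite{K2}. Thus the entire lemma reduces to the single weight-theoretic statement
\[
\ov L(\la^\natural)\in\ov{\mc O}^{\rm int}\ \iff\ \la\in P^{++}_\G\qquad(\la\in P^+).
\]

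To prove this I would pass through the $\DG$-module $\wt L(\la^\theta)$, for which $T(\wt L(\la^\theta))=L(\la)$ and $\ov T(\wt L(\la^\theta))=\ov L(\la^\natural)$. Using the $\gl(\infty|\infty)$-action of \remref{rem:gl} (one copy for each $i\in I_{\otimes}$), the key point is that $\wt L(\la^\theta)$ is a polynomial module over the \emph{full} $\gl(\infty|\infty)$ — not merely over $\gl(\infty|\infty)_{>0}$ as in \lemref{lem:DeltaL} — precisely when the full tail datum $(\la^{(i)}_{-1},\la^{(i)}_1,\la^{(i)}_2,\ldots)$ is a partition for every $i$, i.e. when $\la\in P^{++}_\G$. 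Granting this, all weights of $\wt L(\la^\theta)$ have nonnegative $\epsilon^{(i)}_r$-coordinates for every $r\in\{-1\}\cup\hf\N$; applying $T$ (which retains the coordinates $\epsilon^{(i)}_{-1}$ and $\epsilon^{(i)}_n$, $n\in\N$) is then exactly the integrability of $L(\la)$ over the even tail simple roots $\beta^{(i)}_n$ ($n\in\{-1\}\cup\N$), while applying $\ov T$ (which retains $\epsilon^{(i)}_{-1}$ together with the half-integer coordinates $\epsilon^{(i)}_r$, $r\in\hf+\Z_+$) is exactly the polynomial weight condition \eqref{polynomial weight} defining $\ov{\mc O}^{\rm int}$. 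This yields both implications of the displayed equivalence simultaneously.

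With the irreducible matching in hand I would finish by a composition-factor argument. Since \eqref{polynomial weight} passes to subquotients and weights of a module are the union of those of its composition factors, a module $\ov M\in\ov{\mc O}$ lies in $\ov{\mc O}^{\rm int}$ if and only if each of its composition factors is some $\ov L(\la^\natural)$ with $\la\in P^{++}_\G$. Dually, integrability over $\G$ is preserved under submodules, quotients, and extensions, so $M\in\mc O^{\rm int}$ if and only if each composition factor is $L(\la)$ with $\la\in P^{++}_\G$. As the equivalence $\ov{\mc O}\simeq\mc O$ sends simples to simples according to $\ov L(\la^\natural)\leftrightarrow L(\la)$, it matches composition factors and therefore restricts to an equivalence $\ov{\mc O}^{\rm int}\simeq\mc O^{\rm int}$; semisimplicity and the stated list of simple objects of $\ov{\mc O}^{\rm int}$ follow at once. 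Finally, $\ov{\mc O}^{\rm int}$ is closed under tensor products because \eqref{polynomial weight} is stable under addition of weights, making it a tensor subcategory, and the coincidence of tensor multiplicities with those in $\mc O^{\rm int}$ follows from the character identities of \thmref{thm:irred:char} via the specialization of hook Schur functions to Schur functions used in \lemref{lem:chL}.

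The step I expect to be the main obstacle is establishing the $\gl(\infty|\infty)$-polynomiality criterion for $\wt L(\la^\theta)$: one must show that the partition condition on the highest weight $\la^\theta$ — now including the extra coordinate $\la^{(i)}_{-1}$ coming from the derivation $d^{(i)}$ — not only holds at the top but propagates to \emph{all} weights of the simple module, so that the two distinct truncations $T$ and $\ov T$ genuinely detect the same dominance condition. Controlling the coordinate $\epsilon^{(i)}_{-1}$, whose behaviour differs from that of the remaining tail coordinates and which is lowered precisely by the odd root $\alpha^{(i)}_{-1}$, is the delicate point; I anticipate handling it by the same odd-reflection bookkeeping already used in \propref{prop:GSG:sub} and \lemref{lem:change} to track how $\alpha^{(i)}_{-1}$ interacts with the even tail.
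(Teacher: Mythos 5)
Your overall architecture---transfer the integrability conditions through super duality and reduce everything to a matching of simple objects---is the same as the paper's, and your composition-factor framing is a legitimate variant (on the $\ov{\mc O}$-side the weight condition \eqref{polynomial weight} is indeed detected by composition factors; on the $\mc O$-side the ``if'' direction of your characterization needs the complete reducibility theorem for the anisotropic Kac--Moody superalgebra $\G$ from \cite{K2}, the same input the paper uses). The genuine gap is exactly where you flag it: the claim that $\wt{L}(\la^\theta)$ is polynomial over the \emph{full} $\gl(\infty|\infty)$-tails precisely when $\la\in P^{++}_{\G}$ is never proved, and the tool you propose cannot do the job. The odd-reflection bookkeeping of \propref{prop:GSG:sub} and \lemref{lem:change} tracks how the \emph{highest} weight of a module transforms under finitely many changes of Borel; your claim is a statement about \emph{all} weights of the irreducible module $\wt{L}(\la^\theta)$, equivalently that the simple root $\alpha^{(i)}_{-1}$ is subtracted at most $\la^{(i)}_{-1}$ times from $\la^\theta$ anywhere in the module, and no finite amount of highest-weight bookkeeping controls that. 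Note also where the real difficulty sits: the coordinates $\epsilon^{(i)}_r$ with $r\in\hf\N$ are nonnegative for free, since every object of $\wt{\mc O}$ is $\wt{\mf l}$-semisimple with composition factors that are polynomial over $\gl(\infty|\infty)_{>0}$; the entire content of your key claim is the single coordinate $\epsilon^{(i)}_{-1}$, which is precisely the one not seen by the Levi.

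The paper closes this gap by a mechanism that never analyzes the weights of $\wt{L}(\la^\theta)$ directly. On the $\G$-side it invokes \cite{K2}: the simple integrable objects of $\mc O$ are the $L(\la)$ with $\la\in P^{++}_{\G}$, and such an $L(\la)$ is a polynomial $\gl^{(i)}$-module; on the $\SG$-side it invokes \cite{CK}: any $\ov{M}$ satisfying \eqref{polynomial weight} is a polynomial, hence completely reducible, module over $\ov{\gl}^{(i)}\cong\gl(1|\infty)$. These two facts are linked by the observation that the equivalence of \thmref{thm:super duality} is compatible with restriction to the tail subalgebras $\ov{\gl}^{(i)}$ and $\gl^{(i)}$ and matches polynomial modules with polynomial modules---that is, the established general linear super duality is used as a black box exactly in place of your unproven weight-positivity lemma. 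If you want to complete your argument, substitute this for the odd-reflection step, or equivalently deduce the tail-polynomiality of $\wt{L}(\la^\theta)$ \emph{from} that of $L(\la)$ (which follows from \cite{K2}) via the exact functor $T$ and \lemref{lem:chL}, rather than attempting to prove it directly on the $\DG$-level.
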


\begin{proof}
Given $i\in I_{\otimes}$, let $\wt{\mf{gl}}^{(i)}$ be the subalgebra
of $\DG$ isomorphic to $\gl(\infty|\infty)$, which  corresponds to
$\{\,\alpha^{(i)}_j\,|\,j\in T_\infty\,\}$ with the  Cartan
subalgebra spanned by $h^{(i)}_j$  for $j\in T_\infty$ (see Remark
\ref{rem:gl}). We put $\ov{\gl}^{(i)}=\SG\cap
\wt{\gl}^{(i)}\cong\gl(1|\infty)$ and $\gl^{(i)}=\G\cap
\wt{\gl}^{(i)}\cong \gl(\infty)$.

Let $\ov{M}\in \ov{\mc O}^{\rm int}$ be given. By \eqref{polynomial
weight}, $\ov{M}$ is a polynomial $\ov{\gl}^{(i)}$-module
for $i\in I_{\otimes}$, and hence completely reducible over
$\ov{\gl}^{(i)}$ \cite{CK}. Let $M$ be the $\G$-module in $\mc O$
corresponding to $\ov{M}$ under super duality. Since the super
duality between $\ov{\mc O}$ and $\mc O$ naturally induces a super
duality between corresponding categories of  $\ov{\gl}^{(i)}$ and $\gl^{(i)}$-modules for
each $i\in I_{\otimes}$, $M$ is also a polynomial $\gl^{(i)}$-module
and hence completely reducible over $\gl^{(i)}$. By our choice of $\mf l$, the
$\mf{sl}(2)$ or $\mf{osp}(1|2)$-copy associated with each simple
root of $\G$ acts locally nilpotently on $M$, which implies that
$M\in {\mc O}^{\rm int}$. Hence, $M\cong\bigoplus_{\la\in
{P}^{++}_\G}L(\la)^{\oplus m_\la}$ for some $m_\la\in \Z_+$ and
$\ov{M}\cong\bigoplus_{\la\in
{P}^{++}_\G}\ov{L}(\la^\natural)^{\oplus m_\la}$.

On the other hand, for $\la\in {P}^{++}_\G$, we see that $L(\la)$ is
a polynomial $\gl^{(i)}$-module for $i\in I_{\otimes}$. Again by
using the fact that the super duality between $\ov{\mc O}$ and $\mc
O$ induces an equivalence between corresponding categories of  $\ov{\gl}^{(i)}$ and $\gl^{(i)}$-modules for each $i\in I_{\otimes}$, we conclude that
$\ov{L}(\la^\natural)\in \ov{\mc O}^{\rm int}$. It is clear that
$\ov{\mc O}^{\rm int}$ is closed under tensor product. This
completes the proof.
\end{proof}

Let
\begin{equation*}
P_{\mc G}:=\sum_{i\in I_{\text{\RIGHTcircle}}}\Z\,\omega_i+\sum_{i\in
I_{\otimes}}\left(\Z\epsilon^{(i)}_{-1}+\Z\epsilon^{(i)}_{\hf}\right)
\end{equation*}
be the set of integral weights for ${\mc G}$. For $i\in I$, let
\begin{equation*}
{\mc G}_{(i)}:=
\begin{cases}
\left\langle e_i, f_i, \alpha^\vee_i \right\rangle\cong \mf{sl}(2),
& \text{if $i \in \Icirc$}\,,
 \\[.5em]
\left\langle e_i, f_i, \alpha^\vee_i \right\rangle\cong
\mf{osp}(1|2), & \text{if $i \in \Ibullet$}\,,
 \\[.5em]
\left\langle e_i, f_i, h^{(i)}_{-1}, h^{(i)}_{1/2}
\right\rangle\cong \mf{gl}(1|1), & \text{if $i\in I_{\otimes}$}\,.
\end{cases}
\end{equation*}
We define  ${\mc O}_{\mc G}^{\rm int}$ to be the full subcategory of
${\mc O}_{\mc G}$ consisting of $\mc G$-modules $M$ such that
\begin{itemize}
\item[(1)] $M=\bigoplus_{\mu\in P_{\mc G}}M_\mu$ with ${\rm dim}M_\mu< \infty$,

\item[(2)] $M$ is a locally finite ${\mc G}_{(i)}$-module  for $i\in I_{\text{\RIGHTcircle}}$,

\item[(3)]
$M$ is a polynomial ${\mc G}_{(i)}$-module for $i\in I_{\otimes}$, or  equivalently,

\begin{equation*}
{\rm wt}(M)\subset \sum_{i\in I_{\text{\RIGHTcircle}}}\Z\,\omega_i+\sum_{i\in
I_{\otimes}}\left(\Z_+\epsilon^{(i)}_{-1}+\Z_+\epsilon^{(i)}_{\hf}\right).
\end{equation*}
\end{itemize}
Let ${P}^{++}_{\mc G}$ be the set of weights
\begin{equation}
\sum_{i\in I_{\text{\RIGHTcircle}}}\kappa_i\omega_i +\sum_{i\in I_{\otimes}}
\left(\la^{(i)}_{-1}\epsilon^{(i)}_{-1}+\la^{(i)}_{1}\epsilon^{(i)}_{\hf}\right)
\end{equation}
such that
\begin{itemize}
\item[(1)] $\kappa_i\in\Z_+$ (respectively $\kappa_i\in 2\Z_+$) for
$i\in \Icirc$ (respectively $i\in \Ibullet$),

\item[(2)] $\la^{(i)}_{-1},
\la^{(i)}_{1}\in\Z_+$, where $\la^{(i)}_{1}\neq 0$ only if
$\la^{(i)}_{-1}\neq 0$.
\end{itemize}

\begin{thm}\label{thm:semisimple}
The category ${\mc O}_{\mc G}^{\rm int}$ is a semisimple tensor category, whose
irreducible objects are highest weight $\mc G$-modules with highest
weight $\Lambda\in {P}^{++}_{\mc G}$. Furthermore, the
Littlewood-Richardson rule in ${\mc O}_{\mc G}^{\rm int}$ is
determined by that of a symmetrizable anisotropic Kac-Moody Lie
superalgebra.
\end{thm}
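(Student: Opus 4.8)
The plan is to deduce the theorem from the infinite-rank picture via the truncation functor $\mf{tr}:=\mf{tr}^{\infty}_{-1}$, exploiting that $\SG_{-1}\cong\mc G$ and $\ov{\mc O}_{-1}={\mc O}_{\mc G}$ as recorded in the proof of Theorem~\ref{thm:super duality-2}. First I would check that $\mf{tr}$ carries $\ov{\mc O}^{\rm int}$ into ${\mc O}_{\mc G}^{\rm int}$: by Lemma~\ref{lem:integrable} an object of $\ov{\mc O}^{\rm int}$ is a sum of copies of $\ov L(\la^\natural)$, hence is integrable over the $\mc G_{(i)}$ with $i\in I_{\text{\RIGHTcircle}}$ and has all weights subject to the positivity \eqref{polynomial weight}; both features are inherited by the truncation, which yields conditions (1)--(3). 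By the $\ov{\mc O}$-analogue of \eqref{truncation}, $\mf{tr}$ sends $\ov L(\la^\natural)$ to $\ov L_{-1}(\la^\natural)$, nonzero exactly when $\la^\natural\in\ov P^+_{-1}$; a short computation with the coordinates in \eqref{natural and theta} then matches the surviving highest weights bijectively with $P^{++}_{\mc G}$, the two surviving coordinates attached to each $i\in I_{\otimes}$ recording the polynomial $\gl(1|1)$-weight $(\la^{(i)}_{-1},\la^{(i)}_1)$.

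Next I would show that $\mf{tr}$ restricts to an exact tensor functor on $\ov{\mc O}^{\rm int}$. Exactness is immediate, since $\mf{tr}$ only selects the weight spaces whose weights lie in $\ov P_{-1}$. The key structural point is the tensor compatibility $\mf{tr}(\ov A\otimes\ov B)\cong\mf{tr}(\ov A)\otimes\mf{tr}(\ov B)$: by \eqref{polynomial weight} every weight of an object of $\ov{\mc O}^{\rm int}$ has all of its $\epsilon^{(i)}_r$-coefficients ($r\in\hf+\Z_+$) nonnegative, so whenever a sum $\gamma'+\gamma''$ of two such weights survives truncation---that is, all these coefficients vanish---each summand already survives. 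Thus truncating a tensor product agrees with the tensor product of the truncations, and in particular $\mf{tr}$ preserves tensor multiplicities. Together with the evident closure of conditions (1)--(3) under tensoring, this makes ${\mc O}_{\mc G}^{\rm int}$ a tensor category.

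The step I expect to be the main obstacle is the semisimplicity of ${\mc O}_{\mc G}^{\rm int}$, which I would transport from the semisimple category $\ov{\mc O}^{\rm int}$ through $\mf{tr}$. Being cut out of ${\mc O}_{\mc G}$ by conditions stable under subquotients and extensions, ${\mc O}_{\mc G}^{\rm int}$ is a Serre subcategory of the BGG category ${\mc O}_{\mc G}$, so its simple objects are exactly the simple highest weight modules $L(\Lambda)$ lying in it, namely those with $\Lambda\in P^{++}_{\mc G}$. Likewise $\ov{\mc O}^{\rm int}$ is a Serre subcategory of $\ov{\mc O}$, and being semisimple it gives $\mathrm{Ext}^1_{\ov{\mc O}}(\ov L(\la^\natural),\ov L(\mu^\natural))=0$ for all $\la,\mu\in P^{++}_\G$. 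Since $\mf{tr}$ is exact and, as a truncation functor between these highest weight categories, essentially surjective onto ${\mc O}_{\mc G}$, the plan is: lift a given $M\in{\mc O}_{\mc G}^{\rm int}$ to $\ov{\mc O}$, and then use the positivity \eqref{polynomial weight} together with this $\mathrm{Ext}^1$-vanishing to adjust the lift to one lying in $\ov{\mc O}^{\rm int}$ with the same truncation; such a lift is a direct sum of $\ov L(\la^\natural)$, so $M$ is a direct sum of the $L(\Lambda)$, $\Lambda\in P^{++}_{\mc G}$. The delicate point---where the analysis of the root system of $\DG$ and the odd reflections of Section~\ref{Three Lie superalgebras} is likely to enter---is controlling the non-surviving tail composition factors of the lift so as to keep it inside $\ov{\mc O}^{\rm int}$.

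Finally, for the Littlewood--Richardson statement I would combine the tensor-functor property of $\mf{tr}$ with the tensor equivalence $\ov{\mc O}^{\rm int}\simeq{\mc O}^{\rm int}$ of Lemma~\ref{lem:integrable}. The multiplicity of $L(\Lambda)$ in $L(\Lambda')\otimes L(\Lambda'')$ equals that of $\ov L(\la^\natural)$ in $\ov L({\la'}^\natural)\otimes\ov L({\la''}^\natural)$, which under super duality equals the multiplicity of $L(\la)$ in $L({\la'})\otimes L({\la''})$ over the symmetrizable anisotropic Kac-Moody Lie superalgebra $\G$; these last multiplicities are given by the Littlewood--Richardson rule for $\G$ (see \cite{K2}, and compare \cite{Kw}), which yields the asserted description.
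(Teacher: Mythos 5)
The parts of your proposal that identify the simple objects, make $\mf{tr}:=\mf{tr}^{\infty}_{-1}$ into a tensor functor on $\ov{\mc O}^{\rm int}$ (via the positivity of the $\epsilon^{(i)}_r$-coefficients in \eqref{polynomial weight}), and transfer the Littlewood--Richardson multiplicities through Lemma~\ref{lem:integrable} are sound; indeed your observation that $\mf{tr}(\ov{A}\otimes\ov{B})\cong\mf{tr}(\ov{A})\otimes\mf{tr}(\ov{B})$ on $\ov{\mc O}^{\rm int}$ nicely makes precise a point the paper leaves implicit. The genuine gap is exactly where you place it: semisimplicity. Your plan is to lift an arbitrary $M\in{\mc O}_{\mc G}^{\rm int}$ to an object of $\ov{\mc O}^{\rm int}$ and invoke semisimplicity upstairs, but no mechanism for producing such a lift is given, and none is available without circularity. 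What can be lifted (by the argument of \cite[Lemma 3.10]{Kw}, which is what the paper uses) is a highest weight $\mc G$-module, and the lift is then only a highest weight module in $\ov{\mc O}$: its weights outside $\ov{P}_{-1}$ are unconstrained, so nothing forces the positivity condition \eqref{polynomial weight}, i.e.\ membership in $\ov{\mc O}^{\rm int}$. (Parabolic Verma modules illustrate the problem: $\ov{\Delta}(\la^\natural)$ is a highest weight lift of $\ov{\Delta}_{-1}(\la^\natural)$ lying in $\ov{\mc O}$ but not in $\ov{\mc O}^{\rm int}$.) Producing a lift that does lie in $\ov{\mc O}^{\rm int}$ essentially amounts to already knowing that $M$ is a direct sum of the $L_{\mc G}(\Lambda)$'s, which is the statement being proved; likewise the ``essential surjectivity'' of $\mf{tr}$ onto ${\mc O}_{\mc G}$ is nowhere established in the paper and cannot be used as a black box.

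The paper closes this gap by a non-lifting argument in two steps, and you would need something equivalent. First, it shows directly that every highest weight module $M_{\mc G}\in{\mc O}_{\mc G}^{\rm int}$ with highest weight $\Lambda=\la^\natural\in P^{++}_{\mc G}$ is irreducible: lift $M_{\mc G}$ to a highest weight module $\ov{M}\in\ov{\mc O}$ (integrability of $\ov{M}$ is neither claimed nor needed), pass through super duality to a highest weight $\G$-module $M$ with highest weight $\la$, and note that a nontrivial maximal weight $\mu^\natural$ of a proper submodule of $M_{\mc G}$ would yield, via the $\ov{\mf l}$-semisimplicity of $\ov{M}$ and Lemma~\ref{lem:chL}, an irreducible $\mf{l}$-submodule of $M$ of highest weight $\mu\in P^{++}_{\G}$ with $\la-\mu\in\sum_{\alpha\in\Pi}\Z_+\alpha$; since $\G$ is symmetrizable, \cite[Corollary 2.6 and Lemma 10.3]{K} force $\la=\mu$, a contradiction. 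Second, the ${\rm Ext}^1$-vanishing between the $L_{\mc G}(\Lambda)$'s is proved intrinsically inside ${\mc O}_{\mc G}^{\rm int}$ using the contragredient duality $\vee$: a maximal weight vector of $M^\vee$ generates, by the first step, a copy of $L_{\mc G}(\Lambda')$, and composing with the dual embedding splits the extension \eqref{eq:ses}. Neither step splits anything by transport through $\mf{tr}$; the infinite-rank picture enters only through weight combinatorics of $\G$. With this core replaced, the rest of your outline (simples, tensor structure, Littlewood--Richardson transfer) can stand.
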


\begin{proof}
For $\Lambda\in {P}_{\mc G}$, let $L_{\mc G}(\Lambda)$ be the
irreducible  $\mc{G}$-module in $\mc{O}_{\mc G}$ with highest weight
$\Lambda$. Suppose that $L_{\mc G}(\Lambda)\in\mc{O}_{\mc G}^{\rm
int}$. Then by definition of $\mc{O}_{\mc G}^{\rm int}$, we have
$\Lambda\in {P}^{++}_{\mc G}$. Conversely, if $\Lambda\in
{P}^{++}_{\mc G}$, then there exists  a unique $\la\in {P}^{++}_\G$
such that $\Lambda=\la^\natural$ and $\langle \lambda^\natural,
h^{(i)}_j \rangle =0$ for $i\in I_{\otimes}$ and $j\gg 0$, that is,
the coefficients of $\omega^{(i)}$ in $\la^\natural$ are $0$. Recall
that $\omega^{(i)}=-\epsilon^{(i)}_{-1}+\epsilon^{(i)}_{\hf}$ as a
weight for $\mc{G}$. We have $\ov{L}(\la^\natural)\in \ov{\mc
O}^{\rm int}$ by Lemma \ref{lem:integrable}, and hence $L_{\mc
G}(\Lambda)=\mf{tr}^\infty_{-1}(\ov{L}(\la^\natural))\in \mc{O}_{\mc
G}^{\rm int}$ by \eqref{polynomial weight}. Therefore $\{\,L_{\mc
G}(\Lambda)\,|\,\Lambda\in {P}^{++}_{\mc G}\,\}$ is a complete list
of irreducible modules in $\mc{O}_{\mc G}^{\rm int}$.

Let $M_{\mc G}$ be a highest weight $\mc G$-module in ${\mc O}_{\mc
G}^{\rm int}$ with highest weight  $\Lambda\in {P}^{++}_{\mc G}$,
where $\Lambda=\lambda^\natural$ for some $\la\in {P}^{++}_\G$. We
can check that there exists a highest weight $\SG$-module $\ov{M}\in
\ov{\mc O}$ with highest weight $\la^\natural$ such that
$\mf{tr}^\infty_{-1}(\ov{M})=M_{\mc G}$ by the same argument as in
\cite[Lemma 3.10]{Kw}. Let $M$ be a $\G$-module in $\mc O$
corresponding to $\ov{M}$ via super duality. Then $M$ is also a
highest weight $\G$-module with highest weight $\la$
\cite[Propositions 6.16 and 6.38]{CW}.

Suppose that $M_{\mc G}$ is not irreducible. Then there exists a
proper submodule $N_{\mc G}$ with a non-trivial maximal weight
$\mu^\natural$ for some $\mu\in {P}^{++}_\G$ with $\mu\neq \la$.
Since $\ov{M}$ is $\ov{\mf l}$-semisimple and
$\mf{tr}^{\infty}_{-1}$ maps an irreducible $\ov{\mf{l}}$-module to
an irreducible $(\ov{\mf{l}}\cap \mc{G})$-module or $0$, there
exists an irreducible $\ov{\mf l}$-submodule of $\ov{M}$ with
highest weight $\mu^\natural$. Hence by Lemma \ref{lem:chL}, $M$ has
an irreducible $\mf{l}$-submodule with highest weight $\mu$. In
particular, $\la-\mu\in \sum_{\alpha\in \Pi}\Z_+\alpha$ where $\Pi$
is the set of simple roots of $\G$. On the other hand, $\lambda$ and
$\mu$ are two dominant integral weights appearing in a highest
weight $\G$-module, which implies that $\la=\mu$ by \cite[Corollary
2.6 and Lemma 10.3]{K}. This is a contradiction. Therefore, $M_{\mc
G}=L_{\mc G}(\Lambda)$.

Now, let $\Lambda, \Lambda'\in{P}^{++}_{\mc G}$ be given. Suppose
that we have an exact sequence
\begin{equation}\label{eq:ses}
0 \longrightarrow L_{\mc G}(\Lambda') \longrightarrow M
\longrightarrow L_{\mc G}(\Lambda)\longrightarrow 0
\end{equation}
for some $M\in \mc{O}_{\mc G}^{\rm int}$. Suppose that
$\Lambda-\Lambda'\not\in \sum_{\alpha\in \Pi_{\mc G}}\Z_+\alpha$.
Then $\Lambda'$ is a maximal weight in $M$. Let $f\in
M^\vee_{\Lambda'}$ be such that $\langle f, v \rangle\neq 0$, where
$v\in M_{\Lambda'}$. Since $M^\vee\in \mc{O}_{\mc G}^{\rm int}$ and
$f$ is a maximal weight vector, the $\mc G$-submodule of $M^\vee$
generated by $f$ is isomorphic to $L_{\mc G}(\Lambda')\subset
M^\vee$ by the argument in the previous paragraph. Composing the
embedding $L_{\mc G}(\Lambda') \hookrightarrow M$ with the dual of
$L_{\mc G}(\Lambda')\subset M^\vee$ with respect to $\vee$, we have
a non-zero map
\begin{equation*}
L_{\mc G}(\Lambda') \hookrightarrow M\cong
M^{\vee\vee}\rightarrow L_{\mc G}(\Lambda')^\vee\cong L_{\mc
G}(\Lambda').
\end{equation*}
This implies that \eqref{eq:ses} splits, and hence ${\rm
Ext}^1_{\mc{O}_{\mc G}^{\rm int}}(L_{\mc G}(\Lambda),L_{\mc
G}(\Lambda'))=0$. The same argument also applies to the case when
$\Lambda=\Lambda'$. If $\Lambda-\Lambda'\in \sum_{\alpha\in \Pi_{\mc
G}}\Z_+\alpha$ with $\Lambda\neq \Lambda'$, then we can show that
${\rm Ext}^1_{\mc{O}_{\mc G}^{\rm int}}(L_{\mc G}(\Lambda),L_{\mc
G}(\Lambda'))=0$ by considering the dual of \eqref{eq:ses}.
Therefore, $\mc{O}_{\mc G}^{\rm int}$ is a semisimple tensor
category since it is closed under tensor product.

By Lemma \ref{lem:integrable}, the decomposition of a tensor product
of irreducible modules in $\mc{O}_{\mc G}^{\rm int}$ is determined
by that of ${\mc O}^{\rm int}$. This completes the proof.
\end{proof}

\begin{rem}\label{rem:finite-dimensional case}
In addition to $G(3)$, $F(3|1)$, and $D(2|1,\alpha)$  $(\alpha\in \N)$,  we list the Dynkin diagrams for $\mc{G}$ that correspond to finite-dimensional Lie superalgebras:\vskip 5mm
\begin{center}
\hskip -4cm \setlength{\unitlength}{0.25in}
\begin{picture}(12,2)
\put(3,1){\makebox(0,0)[c]{$\gl(m|1)$}}
\put(6.45,1){\makebox(0,0)[c]{$\bigcirc$}}
\put(8.2,1){\makebox(0,0)[c]{$\bigcirc$}}
\put(10.3,1){\makebox(0,0)[c]{$\cdots$}}
\put(12.4,1){\makebox(0,0)[c]{$\bigotimes$}}
\put(6.7,1){\line(1,0){1.2}}
\put(8.5,1){\line(1,0){1.3}}
\put(10.7,1){\line(1,0){1.4}}
\put(6.6,1.5){\tiny{$_{(-1,-1)}$}}
\put(8.5,1.5){\tiny{$_{(-1,-1)}$}}
\put(10.7,1.5){\tiny{$_{(-1,-1)}$}}
\put(12.5,0.3){\makebox(0,0)[c]{\tiny $\alpha_{-1}$}}
\end{picture}

\hskip -4cm \setlength{\unitlength}{0.25in}
\begin{picture}(12,2)
\put(3,1){\makebox(0,0)[c]{$\osp(2m+1|2)$}}
\put(6.45,1){\makebox(0,0)[c]{$\bigcirc$}}
\put(8.2,1){\makebox(0,0)[c]{$\bigcirc$}}
\put(10.3,1){\makebox(0,0)[c]{$\cdots$}}
\put(12.4,1){\makebox(0,0)[c]{$\bigotimes$}}
\put(6.7,1){\line(1,0){1.2}}
\put(8.5,1){\line(1,0){1.3}}
\put(10.7,1){\line(1,0){1.4}}
\put(6.6,1.5){\tiny{$_{(-2,-1)}$}}
\put(8.5,1.5){\tiny{$_{(-1,-1)}$}}
\put(10.7,1.5){\tiny{$_{(-1,-1)}$}}
\put(12.5,0.3){\makebox(0,0)[c]{\tiny $\alpha_{-1}$}}
\end{picture}

\hskip -4cm \setlength{\unitlength}{0.25in}
\begin{picture}(12,3)
\put(3,1){\makebox(0,0)[c]{$\osp(2m|2)$}}
\put(6.95,2.2){\makebox(0,0)[c]{$\bigcirc$}}
\put(6.96,-0.2){\makebox(0,0)[c]{$\bigcirc$}}
\put(8.2,1){\makebox(0,0)[c]{$\bigcirc$}}
\put(10.3,1){\makebox(0,0)[c]{$\cdots$}}
\put(12.4,1){\makebox(0,0)[c]{$\bigotimes$}}
\put(8,1.2){\line(-1,1){0.82}}
\put(8,0.8){\line(-1,-1){0.82}}
\put(8.5,1){\line(1,0){1.3}}
\put(10.7,1){\line(1,0){1.4}}
\put(6.1,1.5){\tiny{$_{(-1,-1)}$}}
\put(6.1,0.5){\tiny{$_{(-1,-1)}$}}
\put(8.5,1.5){\tiny{$_{(-1,-1)}$}}
\put(10.7,1.5){\tiny{$_{(-1,-1)}$}}
\put(12.5,0.3){\makebox(0,0)[c]{\tiny $\alpha_{-1}$}}
\end{picture}\vskip 5mm

\hskip -4cm \setlength{\unitlength}{0.25in}
\begin{picture}(12,2)
\put(3,1){\makebox(0,0)[c]{$\osp(2|2m)$}}
\put(6.45,1){\makebox(0,0)[c]{$\bigcirc$}}
\put(8.2,1){\makebox(0,0)[c]{$\bigcirc$}}
\put(10.3,1){\makebox(0,0)[c]{$\cdots$}}
\put(12.4,1){\makebox(0,0)[c]{$\bigotimes$}}
\put(6.7,1){\line(1,0){1.2}}
\put(8.5,1){\line(1,0){1.3}}
\put(10.7,1){\line(1,0){1.4}}
\put(6.6,1.5){\tiny{$_{(-1,-2)}$}}
\put(8.5,1.5){\tiny{$_{(-1,-1)}$}}
\put(10.7,1.5){\tiny{$_{(-1,-1)}$}}
\put(12.5,0.3){\makebox(0,0)[c]{\tiny $\alpha_{-1}$}}
\end{picture}

\hskip -4cm \setlength{\unitlength}{0.25in}
\begin{picture}(12,2)
\put(3,1){\makebox(0,0)[c]{$\osp(3|2m)$}}
\put(6.7,1){\makebox(0,0)[c]{\circle*{0.55}}}
\put(8.2,1){\makebox(0,0)[c]{$\bigcirc$}}
\put(10.3,1){\makebox(0,0)[c]{$\cdots$}}
\put(12.4,1){\makebox(0,0)[c]{$\bigotimes$}}
\put(6.7,1){\line(1,0){1.2}}
\put(8.5,1){\line(1,0){1.3}}
\put(10.7,1){\line(1,0){1.4}}
\put(6.6,1.5){\tiny{$_{(-2,-1)}$}}
\put(8.5,1.5){\tiny{$_{(-1,-1)}$}}
\put(10.7,1.5){\tiny{$_{(-1,-1)}$}}
\put(12.5,0.3){\makebox(0,0)[c]{\tiny $\alpha_{-1}$}}
\end{picture}

\end{center}
where $m\geq 2$. We would like to remark that in the above cases except for $\gl(m|1)$, any non-trivial irreducible $\mc{G}$-module in $\mc{O}_{\mc G}^{\rm int}$ is infinite-dimensional.  Indeed, when $\mc G$ is orthosymplectic, the irreducible ${\mc G}$-modules  in $\mc{O}_{\mc G}^{\rm int}$ are the oscillator modules studied in \cite{CZ, CKW}.
\end{rem}

\begin{rem}
A notion of integrable modules was also introduced in \cite{S}, which is different from our notion of irreducible modules in $\mc{O}^{\rm int}_{\mc{G}}$.
\end{rem}

\begin{rem}\label{rem:integrable for general SG_n}
In general, for $n\in \N\cup\{-1\}$, let $\ov{\mc{O}}^{\rm int}_n$
be a full subcategory of $\SG_n$-modules $\ov{M}$ in $\ov{\mc{O}}_n$
such that
\begin{equation}\label{polynomial weight-finite rank}
{\rm wt}\left(\ov{M}\right)\subseteq   \sum_{i\in
I_{\text{\RIGHTcircle}}}\Z\,\omega_i +\sum_{i\in
I_{\otimes}}\sum_{r\in\{-1,\frac{1}{2},\frac{3}{2}\ldots,
\frac{2n+1}{2}\}}\Z_+\epsilon^{(i)}_r.
\end{equation}
Note that $\ov{\mc{O}}^{\rm int}_{-1}=\mc{O}^{\rm int}_{\mc{G}}$ and
the condition \eqref{polynomial weight-finite rank} is equivalent to
saying that $\ov{M}$ is a polynomial representation of
$\ov{\gl}^{(i)}\cap \SG_n\cong \gl(1|n+1)$ (or $\gl(1|1)$ when
$n=-1$) for $i\in I_{\otimes}$. Also, note that the SGCM for $\SG_n$ does not satisfy \eqref{eq:neq0} for $n\geq 1$ (see \eqref{SG diagram}). Then by the same argument as in
Theorem \ref{thm:semisimple}, we can show that $\ov{\mc{O}}^{\rm
int}_n$ is a semisimple tensor category with irreducible objects
$\ov{L}_n(\la^\natural)$ for $\la\in P_{\G}^{++}$ such that
$\la^\natural\in \ov{P}^+_n$ . It was proved  in \cite[Theorem
3.12]{Kw} for $\SG_n$ being an orthosymplectic Lie superalgebra of finite rank, where $\ov{\mc{O}}^{\rm int}_n$ is defined by a condition
slightly different from but equivalent to \eqref{polynomial
weight-finite rank}.
\end{rem}

\begin{rem}
When $\G$ is a Kac-Moody Lie algebra, classical results on
integrable $\G$-modules (see e.g.~\cite[Theorem 9.1.3]{Ku}) together
with Theorem~\ref{thm:super duality} imply that the irreducible
$\SG_n$-module  $\ov{L}_n(\la^\natural)$ in $\ov{\mc{O}}^{\rm
int}_n$ admits a BGG type resolution in terms of (parabolic) Verma
modules. Furthermore, if we express
$\text{ch}\ov{L}_n(\la^\natural)$ as a linear combination of
characters of parabolic Verma modules, then the nonzero multiplicity
of each such Verma character is $\pm 1$. When $\G$ is an anisotropic Kac-Moody
superalgebra, we may apply \cite{K2, CFLW} to obtain similar results.
\end{rem}

\bigskip
\frenchspacing


\begin{thebibliography}{ABCD}

\bibitem[Car]{Car} R.~Carter,
{\em Lie Algebras of Finite and Affine Type}. Cambridge Studies in
Advanced Mathematics {\bf 96}. Cambridge University Press,
Cambridge, 2005.

\bibitem[CK]{CK} S.-J.~Cheng, J.-H.~Kwon,
{\em Howe duality and Kostant homology formula for
infinite-dimensional Lie superalgebras}, Int.~Math.~Res.~Not.~{\bf 2008} (2008), Art.~ID rnn 085, 52 pp.

\bibitem[CKW]{CKW}
S.-J.~Cheng, J.-H.~Kwon, W. Wang, {\em Kostant homology formulas for oscillator modules of Lie superalgebras}, Adv. Math. \textbf{224} (2010) 1548--1588.

\bibitem[CL]{CL} S.-J.~Cheng, N.~Lam,
{\em Irreducible characters of general linear superalgebra and super
duality},  Commun.~Math.~Phys.~{\bf 280} (2010), 645--672.

\bibitem[CLW]{CLW} S.-J.~Cheng, N.~Lam, W.~Wang,
{\em Super duality and irreducible characters of ortho-symplectic
Lie superalgebras}, Invent.~Math.~{\bf 183} (2011), 189--224.

\bibitem[CLW2]{CLW2} S.-J.~Cheng, N.~Lam, W.~Wang,
 {\it  Super duality for general
linear Lie superalgebras and applications}, Proc. Symp. Pure Math. {\bf 86} (2012), 113--136.


\bibitem[CW1]{CW1} S.-J.~Cheng, W.~Wang,
{\em Brundan-Kazhdan-Lusztig and super duality conjectures}, Publ.
Res. Inst. Math. Sci. {\bf 44} (2008), 1219--1272.

\bibitem[CW2]{CW} S.-J.~Cheng, W.~Wang,
{\em Dualities and Representations of Lie Superalgebras}. Graduate
Studies in Mathematics {\bf 144}. American Mathematical Society,
Providence, RI, 2012.

\bibitem[CWZ]{CWZ}
S.-J. Cheng, W. Wang, R.B. Zhang, {\em Super duality and
Kazhdan-Lusztig polynomials}, Trans. Amer. Math. Soc. \textbf{360}
(2008), 5883--5924.

\bibitem[CZ]{CZ} S.-J.~Cheng and R.B.~Zhang, {\it Howe duality and combinatorial
character formula for orthosymplectic Lie superalgebras},
Adv.~Math.~{\bf 182} (2004), 124--172.

\bibitem[CFLW]{CFLW}
S.~Clark, Z.~Fan, Y.~Li, W.~ Wang,
{\em  Quantum supergroups III. Twistors}, Commun.~Math.~Phys. DOI 10.1007/s00220-014-2071-4.

\bibitem[Ger]{Ger} J. Germoni, {\em Indecomposable representations of ${\rm osp}(3,2)$,
    $D(2,1;\alpha)$ and $G(3)$}. Colloquium on Homology and Representation Theory
    (Spanish) (Vaquerias, 1998). Bol. Acad. Nac. Cienc. (Cordoba) {\bf 65} (2000),
    147--163.

\bibitem[Hoyt]{Hoyt} C.~Hoyt,
{\em Regular Kac-Moody superalgebras and integrable highest weight
modules}, J. of Algebra {\bf 324} (2010), 3308--3354.

\bibitem[HS]{HS}
C. Hoyt, V. Serganova,
{\em Classification of finite-growth general
Kac-Moody superalgebras}, Comm. Algebra {\bf 35} (2007), 851--874.

\bibitem[K1]{K1} V.~Kac,
{\em Lie superalgebras}, Adv.~Math. {\bf 26} (1977), 8--96.

\bibitem[K2]{K2} V.~Kac,
{\em Infinite-dimensional algebras, Dedekind's $\eta$-function,
classical M\"obius function and the very strange formula},
Adv.~Math.~{\bf 30} (1978), 85--136.

\bibitem[K3]{K} V.~Kac,
{\em Infinite dimensional Lie algebras}. Third edition. Cambridge
University Press, Cambridge, 1990.

\bibitem[KW1]{KW1} V.~Kac, M.~Wakimoto,
{\em Integrable highest weight modules over affine superalgebras and
number theory}. Lie theory and geometry, 415--456, Progr.~Math.~{\bf
123}, Birkh\"{a}user Boston, Boston, MA, 1994.

\bibitem[KW2]{KW2} V.~Kac, M. Wakimoto,
{\em Integrable highest weight modules over affine superalgebras and
Appell's function}, Commun. Math. Phys. {\bf 215} (2001), 631--682.

%


\bibitem[KT]{KaTa2}
M. Kashiwara, T. Tanisaki,
{\em Kazhdan-Lusztig conjecture for
symmetrizable Kac-Moody Lie algebras. III. Positive rational case},
Asian J. Math. \textbf{2} (1998), 779--832.

\bibitem[Kw]{Kw} J.-H. Kwon,
{\em Super duality and crystal bases for quantum orthosymplectic
superalgebras}, preprint (2013), arXiv:1301.1756.

\bibitem[KL]{KL}
D. Kazhdan, G. Lusztig,
{\em Representations of Coxeter groups and Hecke algebras},
Invent. Math. {\bf 53} (1979), 165-184.

\bibitem[Ku]{Ku} S.~Kumar,
{\em Kac-Moody groups, their flag varieties and representation
theory}. Progress in Mathematics, {\bf 204}. Birkhauser Boston,
Inc., Boston, MA, 2002. 

\bibitem[L]{L} L. Liu, {\em Kostant's formula for Kac-Moody Lie algebras},
J. Algebra \textbf{149} (1992), 155--178.

\bibitem[PS]{PS} I.~Penkov, V.~Serganova,
{\em Cohomology of $G/P$ for classical complex Lie supergroups $G$
and characters of some atypical $G$-modules}, Ann. Inst. Fourier
{\bf 39} (1989), 845--873.

\bibitem[S]{S} V. Serganova,
{\em Kac-Moody superalgebras and integrability},
Developments and trends in infinite-dimensional Lie theory,
169--218, Progr. Math., {\bf 288}, BirkhŠuser Boston, Inc., Boston, MA, 2011.

\bibitem[SZ]{SZ}
Y.~Su, R.B.~Zhang, {\em Generalized Jantzen filtration of Lie
superalgebras II: the exceptional cases}, arXiv:1303.4797.

\bibitem[T]{TM}
J. Thierry-Mieg,
{\em Irreducible representations of the basic
classical Lie superalgebras $SU(m/n)$; $SU(n/n)/U(1)$; $OSp(m/2n)$;
$D(2/1;\alpha)$; $G(3)$; $F(4)$}, Group theoretical methods in
physics (Trieste, 1983), 94--98, Lecture Notes in Phys.,
\textbf{201}, Springer, Berlin, 1984.

\bibitem[vdJ]{vdJ}
J.~Van der Jeugt, {\em Irreducible representations of the
exceptional Lie superalgebras $D(2, 1; \alpha)$},
J.~Math.~Phys.~{\bf 26} (1985), 913--924.

\bibitem[vdL]{vdL}
J.~ van de Leur, {\em A classification of contragredient Lie
superalgebras of finite growth}, Comm. Algebra \textbf{17} (1989),
1815--1841.

\bibitem[V]{V} D. Vogan,
{\em Irreducible characters of semisimple Lie groups. II. The
Kazhdan-Lusztig conjectures}, Duke Math. J. {\bf 46} (1979),
805--859.

\end{thebibliography}
\end{document}